\DeclareMathAlphabet{\mathpzc}{OT1}{pzc}{m}{it}
\numberwithin{equation}{subsection}
\theoremstyle{plain}
\newtheorem{thm}{Theorem}[section]
\newtheorem{lemma}[thm]{Lemma}
\newtheorem{prop}[thm]{Proposition}
\theoremstyle{definition}
\newtheorem{exmp}[thm]{Example}
\theoremstyle{remark}
\newtheorem{rmk}[thm]{Remark}
\newcommand{\Gal}{{\mathrm{Gal}}}
\newcommand{\GL}{{\mathrm{GL}}}
\newcommand{\Nm}{{\mathrm{Nm}}}
\newcommand{\SL}{{\mathrm{SL}}}
\newcommand{\SO}{{\mathrm{SO}}}
\newcommand{\ord}{{\mathrm{ord}}}
\newcommand{\tth}{\textsuperscript{th }}
\newcommand{\sgn}{\mathrm{sgn}}
\newcommand{\Tr}{\mathrm{Tr}}
\newcommand{\Cl}{{\mathrm{Cl}}}
\newcommand{\Cb}{\mathbb{C}}
\newcommand{\Nb}{\mathbb{N}}
\newcommand{\Qb}{\mathbb{Q}}
\newcommand{\Qbar}{\overline{\mathbb{Q}}}
\newcommand{\Rb}{\mathbb{R}}
\newcommand{\Zb}{\mathbb{Z}}
\newcommand{\ebf}{\mathbf{e}}
\newcommand{\af}{\mathfrak{a}}
\newcommand{\bfrak}{\mathfrak{b}}
\newcommand{\df}{\mathfrak{d}}
\newcommand{\ef}{\mathfrak{e}}
\newcommand{\mf}{\mathfrak{m}}
\newcommand{\Ac}{{\mathcal{A}}}
\newcommand{\Cc}{{\mathcal{C}}}
\newcommand{\Dc}{{\mathcal{D}}}
\newcommand{\Ec}{{\mathcal{E}}}
\newcommand{\Fc}{{\mathcal{F}}}
\newcommand{\Hc}{{\mathcal{H}}}
\newcommand{\Ic}{{\mathcal{I}}}
\newcommand{\Oc}{\mathcal{O}}
\newcommand{\Sc}{\mathcal{S}}
\newcommand{\half}{{\tfrac{1}{2}}}
\renewcommand{\Re}{\mathrm{Re}}
\newcommand{\varep}{\varepsilon}
\newcommand{\lp}{\left (}
\newcommand{\rp}{\right )}
\newcommand{\erf}{\mathrm{erf}}
\newcommand{\erfc}{\mathrm{erfc}}
\newcommand{\gh}{g^+}
\newcommand{\gn}{g^*}
\newcommand{\Sha}{\mathcal{C}}
\newcommand{\tTheta}{\tilde{\Theta}}
\newcommand{\tvartheta}{\tilde{\vartheta}}
\newcommand{\varphif}{\varphi_{\mathrm{f}}}
\newcommand{\mbf}{{\mathbf{m}}}
\newcommand{\cbf}{{\textit{\textbf{c}}}}
\newcommand{\fbf}{{\textit{\textbf{f}}}}
\newcommand{\tphi}{{\tilde{\phi}}}
\newcommand{\tf}{{\tilde{f}}}
\newcommand{\VR}{{V_\mathbb{R}}}
\newcommand{\blam}{\boldsymbol{\lambda}}
\newcommand{\thetab}{\boldsymbol{\theta}}
\newcommand{\smat}[4]{\left(\begin{smallmatrix}
                 #1 & #2\\
                 #3 & #4
\end{smallmatrix}\right)}
\newcommand{\pmat}[4]{\begin{pmatrix}
                 #1 & #2\\
                 #3 & #4
\end{pmatrix}}
\newcommand{\pars}[1]{(\!(#1 )\!)}
\providecommand\@dotsep{5}
\renewcommand{\listoftodos}[1][\@todonotes@todolistname]{%
  \@starttoc{tdo}{#1}}
\begin{document}
\title{Harmonic Maass Forms Associated to Real Quadratic Fields}

\author[Pierre Charollois and Yingkun Li]{Pierre Charollois and Yingkun Li}
\address{Equipe de th\'eorie des nombres, Institut de Math\'ematiques de Jussieu-PRG, Case 247. 4, Place Jussieu, 75252 Paris Cedex, France}
\email{pierre.charollois@imj-prg.fr}
\address{Fachbereich Mathematik,
Technische Universit\"at Darmstadt, Schlossgartenstrasse 7, D--64289
Darmstadt, Germany}
\email{li@mathematik.tu-darmstadt.de}
\thanks{The first author is partially supported by the  ANR grant  ANR-12-BS01-0002.\\
\indent The second author is partially supported by the DFG grant BR-2163/4 and an NSF postdoctoral fellowship.}

\begin{abstract}
 In this paper, we explicitly construct harmonic Maass forms that map to the holomorphic weight one theta series associated by Hecke to odd ray class group characters of real quadratic fields. From this construction, we give precise arithmetic information contained in the Fourier coefficients of the holomorphic part of the harmonic Maass form, establishing the main part of a conjecture of the second author.

\end{abstract}

\date{\today}

\maketitle

\tableofcontents
\section{Introduction.}
\label{sec:Intro}

In number theory, modular forms of weight one play an important role because of their special relationship to number fields. 
%
To each weight one eigenform $f \in S_{1, \chi}(\Gamma_0(N))$, Deligne and Serre functorially attached a 2-dimensional, odd, irreducible representation $\varrho_f$ of $\Gal(\Qbar/\Qb)$ \cite{DS74}.
This representation gives rise to a finite Galois extension $M/\Qb$ and it is natural to expect $f$ to encode interesting arithmetic information about $M$.
In fact, Stark's conjecture \cite{Stark77} predicts that a certain subfield of $M$ can be constructed from the special values of the $L$-function attached to the modular form $f$.

In \cite{Hecke26}, Hecke gave several systematic constructions of weight one modular forms.
One of them attached weight one cusp forms, whose Galois representations have dihedral projective images, to real quadratic fields. 
Specifically, let $F \subset \Rb$ be a real quadratic field with discriminant $D > 0$ and fundamental unit $\varepsilon_F > 1$. 
Let $\mf$ be an integral ideal in $F$ with norm $M$ and $\varphi$ an odd ray class group character with conductor $\mf \cdot \infty_1$. 
Then one can associate an eigenform 
$$
f_\varphi(\tau) = \sum_{n \ge 1} c_\varphi(n) q^n := \sum_{\af \subset \Oc_F} \varphi(\af) q^{\Nm(\af)} \in S_{1, \chi}(\Gamma_0(DM)),
$$
where $\tau$ is in the upper half complex plane $\Hc$, $q := e^{2\pi i \tau}$, and $\chi = \chi_D \cdot  \varphi|_{\Qb}$ (see Section \ref{subsec:ray} for details).
The Galois representation associated to $f_\varphi$ by Deligne and Serre is the induction of $\varphi$ from $\Gal(\overline{\Qb}/F)$ to $\Gal(\overline{\Qb}/\Qb)$. 
Hecke's construction was originally in terms of vector-valued modular forms and one of the first instances of producing holomorphic modular forms using a theta lift for indefinite quadratic forms. This work had been vastly generalized by Kudla and Millson \cite{Kudla81, KM90}.

It turns out the method of theta liftings has far reaching consequences in number theory and arithmetic geometry.
In \cite{Borcherds98}, Borcherds constructed automorphic forms with singularities on orthogonal Shimura varieties via a regularized theta lift. The singularities of the output are controlled by the input, which is a holomorphic modular form with poles at the cusps. Then in \cite{Bruinier99} and \cite{Bruinier02}, Bruinier replaced this input with certain non-holomorphic Poincar\'{e} series, which are eigenfunctions of the hyperbolic Laplacian. He used the outputs to produce Chern classes for the Heegner divisors.
%
Motivated by these works, Bruinier and Funke introduced in  \cite{BF04} the notion of \textit{harmonic Maass forms} generalizing classical modular forms.  
They are annihilated by the weight-$k$ hyperbolic Laplacian
\begin{equation}
  \label{eq:Deltak}
 \Delta_k := \xi_{2-k} \circ \xi_k, \; \xi_k := 2 i v^{k} \overline{\frac{\partial}{\partial \overline{\tau}}}, \; \tau = u + iv \in \Hc
\end{equation}
 and have polar type singularities at the cusps (see Section \ref{subsec:weilrep}). 
Using harmonic Maass forms as the input to Borcherds' regularized theta lift, Bruinier and Funke constructed an adjoint of the Kudla-Millson theta lift for orthogonal groups of arbitrary signature \cite{BF04}.
This theta lift then produces automorphic Green's function for special divisors on orthogonal type Shimura varieties, which enables one to calculate arithmetic intersection numbers and leads to generalizations of the famous Gross-Zagier formula \cite{BY09} and the recent proof of an averaged version of Colmez's conjecture \cite{AGHM2}.

Besides as input to the theta lift, harmonic Maass forms also have interesting Fourier coefficients. 
Because of the annihilation by $\Delta_k$, a harmonic Maass form naturally has a holomorphic part and a non-holomorphic part in its Fourier expansion.
In his ground breaking thesis \cite{ZwThesis}, Zwegers completed Ramanujan's holomorphic mock theta functions and produced real-analytic modular forms of weight $\half$. They turned out to be harmonic Maass forms that map to weight $\frac{3}{2}$ unary theta series under $\xi_{1/2}$. 
In weight $\half$, there are many other important works that reveal the arithmetic nature of these Fourier coefficients (see e.g. \cite{BO10}, \cite{DIT11}).

In the self-dual case of weight $k = 1$, Kudla, Rapoport and  Yang \cite{KRY99} constructed ``incoherent Eisenstein series'', which turned out to be harmonic Maass forms that map under $\xi_{1}$ to Eisenstein series associated to an imaginary quadratic field $K$. The Fourier coefficients of the holomorphic part are logarithms of integers, and can be interpreted as arithmetic degrees of special divisors on arithmetic curves. 
Later Duke and the second author studied in \cite{DL15} harmonic Maass forms that map   to weight one cusp forms associated to non-trivial class group characters of $K$.
The Fourier coefficients were shown to be logarithms of algebraic numbers in the Hilbert class field of $K$. 
In his thesis \cite{Ehlen_thesis}, Ehlen gave an arithmetic interpretation of the valuation of these algebraic numbers along the lines of \cite{KRY99}.
In contrast to the incoherent Eisenstein series in \cite{KRY99}, the harmonic Maass forms in \cite{DL15} and \cite{Ehlen_thesis} were not constructed explicitly. 
Also, numerical evidence suggests that given any weight one eigenform $f$ with associated Galois representation $\varrho$, there exists a harmonic Maass form $\tilde{f}$ such that $\xi_1 \tilde{f} = f$ and the Fourier coefficients of its holomorphic part are $\Qbar$-linear combinations of logarithms of algebraic numbers in the number field cut out by $\mathrm{ad} \varrho$ (see \cite{DL15}, \cite{Li16}).

In this paper, we will \textit{explicitly construct} a harmonic Maass form $\tilde{f}_\varphi$ that maps under $\xi_{1}$  to Hecke's weight one cusp form $f_\varphi$, and study the arithmetic information contained in its Fourier coefficients.
One special case of our main result (Theorems \ref{thm:Main} and \ref{thm:Main_sc}) is as follows.

\begin{thm}
  \label{thm:Main_sc_1}
Let $\varphi$ be an odd ray class group character of conductor $\mf \cdot \infty_1$ with $\Nm(\mf) = M$ and $f_\varphi$ the holomorphic weight one eigenform associated to $\varphi$ as above.
Suppose $F$ has class number 1. Then there exists an integer $\kappa_\mf$ dividing $48 M^3 \phi(2M)$ and a harmonic Maass form $\tilde{f}_\varphi \in H_{1, \overline{\chi}}(\Gamma_0(DM))$ with holomorphic part $\tilde{f}^+_\varphi(\tau) = \sum_{n \gg -\infty} c^+_\varphi(n) q^n$ such that $\xi_1 \tilde{f}_\varphi = f_\varphi$ and 
\begin{equation}
  \label{eq:c+_1}
  c^+_\varphi(n) 
- \frac{1}{2} \sum_{(\lambda) \subset \Oc_F, \; \Nm((\lambda)) = n } \lp {\varphi^{-1}}(\lambda)   - {\varphi^{-1}}(\lambda') \rp\log \left| \frac{\lambda}{\lambda'} \right| \in 
\frac{1}{\kappa_\mf} \Zb[\varphi] \cdot \log \varep_F,
\end{equation}
where $\phi(N) := [\SL_2(\Zb): \Gamma_0(N)]$ and $\Zb[\varphi] \subset \Cb$ is the subring generated by the values of $\varphi$.
\end{thm}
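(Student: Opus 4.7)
My plan is to construct $\tilde{f}_\varphi$ as a theta lift, applying a signature-$(1,1)$ Siegel theta kernel built from $(\Oc_F, \Nm)$ to an explicit non-holomorphic input. Concretely, I would identify a harmonic Maass function $G$ of weight $0$ on $\Gamma_0(DM) \backslash \Hc$ whose image under $\xi_0$ is the weight-$2$ Eisenstein series paired, via Hecke's formula, with $f_\varphi$; then define
\begin{equation*}
\tilde{f}_\varphi(\tau) = \int^{\mathrm{reg}}_{\Gamma_0(DM) \backslash \Hc} G(z) \, \Theta_\varphi(\tau, z) \, \frac{du\,dv}{v^2},
\end{equation*}
where $\Theta_\varphi$ is the indefinite theta kernel (of weight $1$ in $\tau$ and weight $0$ in $z$) twisted by $\varphi$. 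Modularity, level, and the character $\overline{\chi}$ then follow from the transformation properties of $\Theta_\varphi$ combined with the $\Gamma_0(DM)$-invariance of $G$; harmonicity of $\tilde{f}_\varphi$ in $\tau$ follows from harmonicity of $G$ in $z$ and the duality $\Delta_{1,\tau} \Theta_\varphi = \Delta_{0,z} \Theta_\varphi$ standard for such kernels.

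Since $\xi_1$ commutes with the theta lift up to applying $\xi_0$ to the input, the identity $\xi_1 \tilde{f}_\varphi = f_\varphi$ reduces to recognizing the theta lift of $\xi_0 G$ as Hecke's original construction of $f_\varphi$. To compute the holomorphic part $\tilde{f}_\varphi^+$, I would unfold the regularized integral against the $n$-th Whittaker coefficient of $\Theta_\varphi$. The unfolded integral splits as a sum over $\lambda \in \Oc_F$ with $\Nm(\lambda) = n$ (modulo units), each term contributing a geodesic integral of $G$ along the semicircle in $\Hc$ with endpoints $\lambda/\lambda'$ and $\lambda'/\lambda$. Using the explicit form of $G$, each such cycle integral evaluates, up to an affine function of $\log \varep_F$, to $\varphi^{-1}(\lambda) \log|\lambda/\lambda'|$; anti-symmetrizing via the oddness of $\varphi$ and restricting to \emph{principal} ideals (using the class-number-$1$ hypothesis) then produces the main term of \eqref{eq:c+_1}.

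The remaining indeterminacy by $\Zb[\varphi] \log \varep_F$ is forced by the choice of generator for each principal ideal: replacing $\lambda$ by $\varep_F \lambda$ shifts $\log|\lambda/\lambda'|$ by $2 \log \varep_F$, so the overall ambiguity is a $\Zb[\varphi]$-linear combination of $\log \varep_F$. The denominator $\kappa_\mf$ should track the rational denominators in the Fourier expansion of the weight-$2$ Eisenstein series $\xi_0 G$, with the factor $48$ reflecting the classical Bernoulli denominator in weight-$2$ Eisenstein constants, the power of $M$ coming from evaluating these Eisenstein series modulo $\mf$, and $\phi(2M)$ entering through the level-index normalization of $G$ on $\Gamma_0(DM)$. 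I expect the main obstacle to be making $\kappa_\mf$ completely explicit and verifying that it divides $48 M^3 \phi(2M)$: this demands careful accounting of Eisenstein denominators at all cusps of $\Gamma_0(DM)$ and precise tracking of how these denominators propagate through the regularized theta integral into the coefficients $c^+_\varphi(n)$.
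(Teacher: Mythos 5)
Your proposed lift has no kernel to support it: for the relevant quadratic space of signature $(1,1)$ the symmetric domain is the one\-dimensional hyperbola parametrized by $\Rb^\times_+$, not the upper half plane, so there is no ``indefinite theta kernel $\Theta_\varphi(\tau,z)$ of weight $1$ in $\tau$ and weight $0$ in $z\in\Gamma_0(DM)\backslash\Hc$'' and no duality $\Delta_{1,\tau}\Theta_\varphi=\Delta_{0,z}\Theta_\varphi$ to invoke. Hecke's $f_\varphi$ is not the lift of a weight-$2$ Eisenstein series over the modular curve; it is the integral of $\Theta(\tau,L;t)$ over the closed torus orbit $t\in[1,\varep_L]$. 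The correct differential identity there (Proposition \ref{prop:diff_ops}) is first order, $\xi_1\Theta(\tau,\pm L;t)=-d_t\Theta(\tau,\mp L;t)$, and it swaps $L$ with $-L$; as a consequence, when one tries to produce a $\xi_1$-preimage of $\vartheta(\tau,L)$ by deforming the torus integral (the paper uses $I(\tau,-L,s)=\int_1^{\varep_L}t^s\Theta(\tau,-L;t)\frac{dt}{t}$ and differentiates at $s=0$), integration by parts leaves the non-modular term $\log\varep_L\cdot\Theta(\tau,L;1)$, cf.\ \eqref{eq:key_eq}. The entire difficulty of the theorem is to cancel this term by an \emph{explicitly constructed} modular form $\tTheta(\tau,L)$ with $\xi_1\tTheta(\tau,L)=\Theta(\tau,L;1)$ whose holomorphic part has rational coefficients with controlled denominators (Sections \ref{sec:special_func}--\ref{sec:tTheta}: the function $g_\tau$, the weight-$\half$ harmonic Maass forms $\tilde{\thetab}_N$ with their $E_2/(12Ni\eta^3)$ correction, McGraw's rationality result, and the averaging over $\Gamma_0(N)\backslash\Gamma$). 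Your sketch contains no counterpart to this step, and it is precisely where the arithmetic content of \eqref{eq:c+_1} comes from.

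Relatedly, your mechanism for the denominator is not viable: $\kappa_\mf$ does not arise from Bernoulli denominators of weight-$2$ Eisenstein series at cusps, but from the bound $12N$ on the denominators of $\tilde{\thetab}^+_{N}$ (Lemma \ref{lemma:denom}, Theorem \ref{thm:hMf1/2}), the denominators $\sqrt{AM}$ and $N\sqrt{AM}$ of the Weil representation matrices, and the index $\phi(N)=[\Gamma:\Gamma_0(N)]$ from the average \eqref{eq:tTheta_gen}, assembled in Theorem \ref{thm:tTheta_gen} into $\kappa_L=12A^3(N')^3\phi(N)$ and then reduced to $24M^3\phi(2M)$ (Theorem \ref{thm:Main}), with the final factor $2$ coming from $[\Oc_F^\times:\Gamma_L]$ in the scalar-valued descent (Theorem \ref{thm:Main_sc}, via Stark's theorem and the maps $\Cc_{\af,\mf}$, $\Cc_\varphi$). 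Your unfolding claim --- that each $\lambda$ with $\Nm(\lambda)=n$ contributes a geodesic cycle integral of $G$ evaluating to $\varphi^{-1}(\lambda)\log|\lambda/\lambda'|$ up to multiples of $\log\varep_F$ --- is asserted for a function $G$ you never construct, and such cycle integrals are in general transcendental quantities with no a priori relation to logarithms of algebraic numbers; proving exactly such an identity is the substance of Proposition \ref{prop:FE_I'} together with Theorem \ref{thm:tTheta_gen}, which your outline does not replace. The only part of your argument that matches the paper is the (easy) observation that replacing $\lambda$ by $\varep_F\lambda$ shifts the main term by elements of $\Zb[\varphi]\log\varep_F$.
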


\begin{rmk}
The class number assumption is only to ease exposition. The more general result is in Theorem \ref{thm:Main_sc}.
Since Hecke's construction is naturally stated in the setting of vector-valued modular forms transforming with respect to the Weil representation, we will first obtain the main result in this setting (Theorem \ref{thm:Main}) and deduce Theorem \ref{thm:Main_sc} from it.
\end{rmk}

\begin{rmk}
  For each $n \ge 1$, we can define an analogue of $c_\varphi(n)$ by
\begin{equation}
  \label{eq:cbf}
  \cbf_\varphi(n) := 
\frac{1}{2} \sum_{(\lambda) \subset \Oc_F, \; \Nm((\lambda)) = n } \lp {\varphi^{-1}}(\lambda)   - {\varphi^{-1}}(\lambda') \rp\log \left| \frac{\lambda}{\lambda'} \right| \in \Cb /R_\varphi
\end{equation}
with $R_\varphi := \Zb[\varphi] \cdot \log \varep_F$ and form the formal power series
\begin{equation}
  \label{eq:fbf}
\fbf_\varphi := \sum_{n \ge 1} \cbf_\varphi(n) q^n \in
\Cb/R_\varphi\llbracket q \rrbracket.
\end{equation}
Then the second author conjectured in \cite{Li16} that $\fbf_\varphi$ can be lifted to the holomorphic part of a harmonic Maass form mapping to $f_\varphi$ under $\xi_1$. 
Now Theorem \ref{thm:Main_sc_1} implies a slightly weaker result that $\kappa_\mf \fbf_\varphi$ can be lifted, i.e.\ $\kappa_\mf \fbf_\varphi$ agrees with $\kappa_\mf \tilde{f}_\varphi^+$ as formal power series in $\Cb/R_\varphi \llbracket q \rrbracket$.
In specific cases, it is possible to reduce the bound $48 M^3\phi(2M)$ above with careful analysis.
Note that if $n = \ell$ or $\ell^2$ with $\ell$ an inert prime in $F/\Qb$, then Theorem \ref{thm:Main_sc_1} implies that $\kappa_\mf c^+_\varphi(n) \in R_\varphi$. 
\end{rmk}

\begin{rmk}
In \cite{DLR15a}, Darmon, Lauder and Rotger studied a non-classical, overconvergent generalized eigenform associated to $f_\varphi$. In their $p$-adic setting, the $\ell$-th Fourier coefficient is the  $p$-adic logarithm of a Gross-Stark $\ell$-unit in a class field of $F$ when $\ell$ is an inert prime in $F/\Qb$. Otherwise, it is zero. In Section \ref{subsec:ray}, we will rewrite $\cbf_\varphi(n)$ to show that the archimedean and non-archimedean settings are in some sense complementary to each other. 
\end{rmk}

\begin{rmk}
In \cite{Li16}, it is shown that certain $\Zb[\varphi]$-linear combinations of the $c^+_\varphi(n)$'s are the values of Hilbert modular functions at big CM points. Thus, getting a handle on the individual $c^+_\varphi(n)$ allows us to give explicit factorization formula of the CM values in the spirit of Gross and Zagier \cite{GZ85}. Furthermore the CM values are defined over $F$. We hope to pursue this line of investigation in the future.
\end{rmk}


In order to best reflect the nature of the coefficients $c^+_\varphi(n)$, we have stated Theorem \ref{thm:Main_sc_1} as an existence result. 
Its proof is through \textit{explicit construction} and it is possible to write down a closed formula for $c^+_\varphi(n)$ as a finite sum of $\Qb$-linear combination of logarithms of algebraic numbers in $F$. We will do this numerically for $\eta^2(\tau)$ at the end.

 The construction starts by deforming Hecke's theta integral with a spectral parameter $s \in \Cb$ (see \eqref{eq:s_int}).
The derivative of this deformed integral at $s = 0$ differs from the desired harmonic Maass form by a non-modular contribution $-\tTheta^*(\tau, L)$ from the boundary. We then construct a modular object $\tTheta(\tau, L)$ that differs from this boundary contribution by a translation-invariant holomorphic function $\tTheta^+(\tau, L)$, whose Fourier coefficients are in $\frac{1}{\kappa_\mf} \Zb[\varphi] \cdot \log \varep_F$. 
Their difference is then the desired harmonic Maass form.

The outline of the paper is as follows. In Section \ref{sec:lift}, we will recall some basic information about vector-valued automorphic forms and theta functions following \cite{Kudla81} and \cite{Borcherds98}. 
In Section \ref{sec:special_func}, we will define a continuous function $g_\tau \in L^1(\Rb) \cap L^\infty(\Rb)$ with nice properties (see Proposition \ref{prop:gtau}) and use it in Section \ref{sec:tTheta} to construct $\tTheta(\tau, L)$, which offsets the effect of the boundary contribution to the deformed theta integral.
Finally in the last two sections, we will state and prove the vector-valued and scalar-valued versions of our main result, before giving some numerical examples at the end.

\section*{Acknowledgment}
The authors thank Jan Bruinier for many illuminating conversations, and the anonymous referee for detailed comments and several helpful suggestions, which greatly improved the exposition of the paper.

\section{Theta Lift from $\mathrm{O}(1, 1)$ to $\SL_2$.}
\label{sec:lift}

In this section, we will follow Kudla \cite{Kudla81} to recall the work of Hecke \cite{Hecke26} in terms of a theta lift from $\mathrm{O}(1, 1)$ to $\SL_2$. 

\subsection{Indefinite, anisotropic $\Zb$-lattice of rank 2.}
\label{subsec:lattice}
An even, integral lattice is a $\Zb$-module $L$ equipped with a quadratic form $Q: L \to \Zb$. 
It is anisotropic if $L$ does not contain an nonzero isotropic vector, i.e. $\lambda \in L \backslash\{0\}$ with $Q(\lambda) = 0$. 
If such lattice is indefinite and has rank 2, then it can be described by $L = \Zb^2$ and a quadratic form $Q(m, n) = Am^2 + B mn + C n^2$ with $A, B, C \in \Zb$ such that the discriminant $D:= B^2 - 4AC > 0$ is not a perfect square.
Furthermore, if $D$ is fundamental, then $A\Zb + \frac{B + \sqrt{D}}{2} \Zb$ is an integral ideal in the real quadratic field $\Qb(\sqrt{D})$ with norm $|A| \neq 0$ and
$$
\Nm\lp  A m + \frac{B+ \sqrt{D}}{2} n \rp =  A \cdot Q(m, n).
$$
In fact, there is a correspondence between indefinite binary quadratic forms and ideals in real quadratic fields (see \cite{Buell89}). We will use the latter language throughout this work.

Let $\Qbar$ be an algebraic closure of $\Qb$ and fix an embedding $\Qbar \hookrightarrow \Cb$ throughout.
For a discriminant $D \ge 1$ (not necessarily fundamental), let $F = \Qb(\sqrt{D})$ be the corresponding real quadratic field with ring of integers $\Oc_F$.
The $\Zb$-lattice $\Oc_D := \Zb + \Zb \tfrac{D + \sqrt{D}}{2}$ is a subring of $\Oc_F$. Its dual with respect to the norm form $\Nm$ as the quadratic form is $\df_D^{-1} \Oc_F$, where $ \df_D := \sqrt{D} \Oc_D$.
If $D$ is fundamental, then $\Oc_D = \Oc_F$ and $\df_D = \df_F$ is the different.
The group of units $\Oc_F^\times$ is generated by $- 1$ and the fundamental unit $\varepsilon_F > 1$.

For an integral ideal $\af \subset \Oc_D$ with $A := [\Oc_D: \af]$ and a positive integer $M \in \Nb$, consider the $\Zb$-lattice
\begin{equation}
  \label{eq:Lam}
(  L_{\af, M} , Q_{\af, M}) := \lp M \af, \frac{\Nm_{F/\Qb}}{AM} \rp.
\end{equation}
It is anisotropic and has rank 2.
The induced bilinear form is given by
$
B_{\af, M}(\lambda, \mu) := \frac{\Tr_{F/\Qb}(\lambda \cdot \mu')}{AM}
$
for all $\lambda, \mu \in L_{\af, M}$ with $'$ the non-trivial automorphism in $\Gal(F/\Qb)$. 
The dual lattice $L^*_{\af, M}$ is given by $\af \df_D^{-1}$ and the finite quadratic modular $L^*_{\af, M}/L_{\af, M}$ is isomorphic to $\Oc_D/M\df_D$.
Finally, notice that $-L_{\af, M}$ is isometric to $L_{\af \df_D, M}$ via $\lambda \mapsto \lambda \sqrt{D}$. 


\subsection{Weil representation and automorphic forms.}
\label{subsec:weilrep}

Let $(L, Q)$ be an indefinite, even, integral lattice of rank 2 with bilinear form $(,):L\times L \to \Zb$.
As usual, let $L^*$ be the dual lattice of $L$ and $\{\ef_h: h \in L^*/L \}$ denote the canonical basis of the vector space $\Cb[L^*/L]$ and $\ebf(a) := e^{2\pi i a}$ for any $a \in \Cb$.
Then $\Gamma:= \SL_2(\Zb)$ acts on $\Cb[L^*/L]$ through the Weil representation $\rho_L$ as (see e.g.\ \cite[\textsection 4]{Borcherds98})
\begin{equation}
\label{eq:Weil_rep}
  \rho_L(T) (\ef_h) = \ebf(Q(h)) \ef_h,  \;
\rho_L(S) (\ef_h) = \frac{1}{\sqrt{|L^*/L|}} \sum_{\delta \in L^*/L} \ebf(-(\delta, h)) \ef_\delta,
\end{equation}
where $T=\left(\begin{array}{cc}1&1\\ 0&1\end{array}\right),S=\left(\begin{array}{cc}0&-1\\1&0\end{array}\right).$ 
Note that $\rho_{-L} = c \circ {\rho_L} \circ c$ on $\Cb[L^*/L]$, where $c: \Cb \to \Cb$ denotes complex conjugation.
If $P \subset L$ is a finite index sublattice, then $L^* \subset P^*$ and denote $s: L^*/P \to L^*/L$ the natural surjection.
There is a linear map $\psi: \Cb[P^*/P] \to \Cb[L^*/L]$ defined by 
\begin{equation}
  \label{eq:psi}
  \psi(\ef_h) :=
  \begin{cases}
    \ef_{s(h)}, & h \in L^*/P\\
0, & \text{ otherwise.}
  \end{cases}
\end{equation}
It is straightforward to check that $\psi$ is in fact $\Gamma$-linear (with respect to $\rho_P$ and $\rho_L$).

Let $d_L$ be the level of $L$ and $\Gamma(d_L) \subset \Gamma$ the principal congruence subgroup of level $d_L$. Then $\rho_L$ is trivial on $\Gamma(d_L)$ and can be viewed as a representation of $\SL_2(\Zb/d_L \Zb)$ (see e.g.\ Proposition 4.5 in \cite{Sch09}).
Let $\zeta_{d_L}$ be a primitive $d^{\mathrm{th}}_L$ root of unity. 
For $a \in (\Zb/d_L \Zb)^\times$, let $\sigma_a \in \Gal(\Qb(\zeta_{d_L})/\Qb)$ be the element that sends $\zeta_{d_L}$ to $\zeta^a_{d_L}$. 
Then $\sigma_a$ acts naturally on $W_L := \Qb(\zeta_{d_L})[L^*/L]$.
Let $\varsigma_a \in \GL(W_L)$ be the left action given by
\begin{equation}
  \label{eq:varsigma}
  \varsigma_a \cdot w := \sigma_a^{-1}(w), \; w \in W_L.
\end{equation}
In \cite{Mc03}, McGraw extended $\rho_L$ to a unitary representation of $\GL_2(\Zb/d_L \Zb)$ on $W_L$, where the action of 
\begin{equation}
  \label{eq:Ja}
J_a := \smat{1}{}{}{a} \in \GL_2(\Zb/d_L \Zb)  
\end{equation}
is $\varsigma_a$. This is the main ingredient used to prove the rationality of basis of vector-valued modular forms. The result we need can be stated as follows.
\begin{prop}\cite[Theorem 4.3]{Mc03}
  \label{prop:rationality}
The map that sends $\gamma$ to $\rho_L(\gamma)$ when $\gamma \in \SL_2(\Zb/d_L\Zb)$ and $J_a$ to $\varsigma_a$ is a unitary representation of $\GL_2(\Zb/d_L \Zb)$ on $W_L$.
In other words, if we view $\rho_L(\gamma) \in \GL_{|L^*/L|}(\Qb(\zeta_{d_L}))$ with respect to the standard basis of $W_L$, then $\sigma_a(\rho_L(\gamma)) = \rho_L(J_a^{-1} \gamma J_a)$.
\end{prop}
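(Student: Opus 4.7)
The strategy I would follow is to exploit the semidirect decomposition $\GL_2(\Zb/d_L \Zb) = \SL_2(\Zb/d_L \Zb) \rtimes \{J_a : a \in (\Zb/d_L \Zb)^\times\}$, in which every element writes uniquely as $\gamma J_a$ with $a$ its determinant, and the elements $J_a$ normalize $\SL_2(\Zb/d_L \Zb)$ by conjugation. Defining $\rho_L(\gamma J_a) := \rho_L(\gamma)\, \varsigma_a$ yields the desired extension provided the compatibility condition
\[
\varsigma_a \, \rho_L(\gamma) \, \varsigma_a^{-1} = \rho_L(J_a \gamma J_a^{-1})
\]
holds for every $\gamma \in \SL_2(\Zb/d_L \Zb)$ and $a \in (\Zb/d_L \Zb)^\times$. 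Since $\varsigma_a$ acts on $W_L$ by $\sigma_a^{-1}$ and the standard basis vectors $\ef_h$ are Galois-fixed, conjugation by $\varsigma_a$ on $\GL_{|L^*/L|}(\Qb(\zeta_{d_L}))$ amounts to entry-wise application of $\sigma_a^{-1}$, and the condition becomes the clean Galois-equivariance statement $\sigma_a(\rho_L(\gamma)) = \rho_L(J_a^{-1} \gamma J_a)$ recorded in the proposition.

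Both sides of this identity are homomorphisms in $\gamma$ (the left via $\rho_L$ and the ring automorphism $\sigma_a$, the right via $\rho_L$ and conjugation by $J_a^{-1}$), so it suffices to verify it on a generating set. Using the surjection $\SL_2(\Zb) \twoheadrightarrow \SL_2(\Zb/d_L \Zb)$ together with $\SL_2(\Zb) = \langle T, S \rangle$, I would reduce to $\gamma = T$ and $\gamma = S$. For $T$ the check is immediate from \eqref{eq:Weil_rep}: one has $J_a^{-1} T J_a = T^a$, while $\rho_L(T)$ is diagonal with eigenvalues $\ebf(Q(h))$, each a $d_L$-th root of unity because $Q$ takes values in $\tfrac{1}{d_L}\Zb$ on $L^*$; hence $\sigma_a$ raises each eigenvalue to the $a$-th power, matching $\rho_L(T^a)$ term by term.

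The case $\gamma = S$ is the main obstacle. Here $J_a^{-1} S J_a = \smat{0}{-a}{a^{-1}}{0}$, which I would factor as $S \cdot \smat{a^{-1}}{0}{0}{a}$ and evaluate using the general formula for $\rho_L$ on matrices with nonzero lower-left entry (or equivalently by expressing the diagonal matrix as a word in $T$ and $S$ and using the $T$-case together with the $S$-relations). The quadratic exponentials inside the sum transform under $\sigma_a$ as one expects, so the real difficulty is the constant prefactor $|L^*/L|^{-1/2}$ appearing in $\rho_L(S)$. To control $\sigma_a(\sqrt{|L^*/L|})$ I would invoke Milgram's formula, which identifies the Gauss sum $\sum_{h \in L^*/L} \ebf(Q(h))$ with $\sqrt{|L^*/L|}$ times an eighth root of unity depending only on the signature of $L$; this places $\sqrt{|L^*/L|}$ inside $\Qb(\zeta_{d_L})$ and pins down the explicit quadratic Dirichlet character by which $\sigma_a$ acts on it. Matching this character against the one dictated by the Weil-representation formula at $\smat{0}{-a}{a^{-1}}{0}$ is the core bookkeeping, and this is precisely the content of McGraw's computation in \cite{Mc03}.
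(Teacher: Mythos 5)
The paper offers no proof of this proposition---it is imported verbatim from \cite[Theorem 4.3]{Mc03}---and your proposal likewise defers the only substantive step (how $\sigma_a$ acts on $\rho_L(S)$, i.e.\ on the Gauss-sum prefactor $\sqrt{|L^*/L|}$, which you rightly propose to control via Milgram's formula) back to McGraw, so in substance both rest on the same citation. The parts you do carry out---the semidirect-product reformulation showing that the required cocycle condition is exactly $\sigma_a(\rho_L(\gamma)) = \rho_L(J_a^{-1}\gamma J_a)$, the reduction to the generators $T$ and $S$, and the $T$-case via $J_a^{-1}TJ_a = T^a$ together with $Q(h) \in \tfrac{1}{d_L}\Zb$ on $L^*$---are correct and follow the standard line of argument used in \cite{Mc03}.
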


Now, we will quickly recall some facts about automorphic forms. 
Let $k \in \Zb$ be an integer, $V$ an $n$-dimensional $\Cb$-vector space, $\Gamma' \subset \Gamma$ a finite index subgroup and $\rho: \Gamma' \to \GL(V)$ a representation. 
Then a real-analytic function $f = (f_j)_{1 \le j \le n} : \Hc \to V$ is a vector-valued automorphic form on $\Gamma'$ with weight $k$ and representation $\rho$ if it satisfies
\begin{equation}
  \label{eq:modularity}
(  f \mid_{k, \rho} \gamma)(\tau) :=  \rho(\gamma)^{-1} \cdot \lp  (c \tau + d)^{-k} f_j \lp \frac{a\tau + b}{c\tau + d} \rp \rp_{1\le j \le n} = f(\tau)
\end{equation}
for all $\gamma = \smat{a}{b}{c}{d} \in \Gamma'$ and $\tau \in \Hc$. 
If $\rho$ is trivial, then we may omit it in the slash operator.
We denote the space of such functions by $\Ac_{k, \rho}(\Gamma')$. 
The subspace of $\Ac_{k, \rho}(\Gamma')$ consisting of functions holomorphic on $\Hc$ is denoted by $M^!_{k, \rho}(\Gamma')$, which is usually called the space of weakly holomorphic modular forms.
Let $M_{k, \rho}(\Gamma')$ and $S_{k, \rho}(\Gamma')$ be the usual space of modular forms and cusp forms respectively.
A function $f \in \Ac_{k, \rho}(\Gamma')$ is called a harmonic weak Maass form if $\Delta_k f = 0$ and $f$ has at most linear exponential growths at all the cusps. 
It satisfies $\xi_k f \in M^!_{2-k, \overline{\rho}}(\Gamma')$ by the definition of $\Delta_k$ in \eqref{eq:Deltak}.
Furthermore, if $\xi_k f$ vanishes at all the cusps, then we call $f$ a harmonic Maass form.
We use $H_{k, \rho}(\Gamma')$ to denote the space of harmonic Maass forms on $\Gamma'$ of weight $k$ and representation $\rho$.

Since $f \in H_{k, \rho}(\Gamma')$ satisfies $\Delta_k f = 0$, it can be written canonically as the difference of a holomorphic part $f^+$ and non-holomorphic part $f^*$. Let $\mathbf{B}$ be a basis of $V$. Then $f^+$ and $f^*$ have the following Fourier expansions.
\begin{align*}
  f^+(\tau) &= \sum_{\ef \in \mathbf{B}} \lp \sum_{\begin{subarray}{c} n \in \Qb \\ n \gg -\infty  \end{subarray}} c^+(n, \ef) q^n  \rp \ef, \;
f^*(\tau) =  (4\pi)^{k-1} \sum_{\ef \in \mathbf{B}} \lp \sum_{\begin{subarray}{c} n \in \Qb \\ n > 0  \end{subarray}} c(n, \ef) \Gamma(1-k, 4\pi n v) q^{-n}  \rp \ef.
\end{align*}
It is readily checked that $\xi_k f(\tau) = \xi_k (- f^*(\tau)) = \sum_{\ef \in \mathbf{B}} \lp \sum_{\begin{subarray}{c} n \in \Qb \\ n > 0  \end{subarray}} n^{1-k} c(n, \ef) q^{n}  \rp \ef \in S_{2-k, \overline{\rho}}(\Gamma')$.

\subsection{Vector-valued theta functions.}
\label{subsec:vvTheta}
Let $(\VR, Q)$ denote the quadratic space of signature $(1, 1)$ with $\VR = \Rb^2$ and for $X = (x_1, x_2), Y = (y_1, y_2) \in \VR$
\begin{equation}
  \label{eq:Q}
Q(X) := x_1x_2, \; B(X, Y) := x_1 y_2 + x_2 y_1.
\end{equation}
Note that $(\VR, Q) \cong (\VR, -Q)$ via the map 
\begin{equation}
  \label{eq:iota}
 \iota ((x_1, x_2)) := (x_1, -x_2). 
\end{equation}
The symmetric domain attached to $\VR$ is the hyperbola 
$\Dc := \left\{Z^- \subset \VR | (Z^-, Z^-) = -1 \right\}$, which is parametrized by $\Rb^\times$ via
\begin{align*}
  \Phi: \Rb^\times &\to \Dc \\
t &\mapsto Z^-_t :=    \lp \tfrac{t}{\sqrt{2}}, -\tfrac{t^{-1}}{\sqrt{2}} \rp.
\end{align*}
We denote by $\Dc^+$ the connected component of $\Dc$, which is parametrized by $\Rb^\times_+$ under $\Phi$.
Define
\begin{equation}
  \label{eq:Z+}
Z^+_t := \frac{1}{\sqrt{2}} \lp t, t^{-1} \rp \in (Z^-_t)^\perp.  
\end{equation}
Then $d \Phi \lp t \tfrac{d}{dt} \rp = Z^+_t \in \Rb^2$ and $\{Z^+_t, Z^-_t\}$ is an orthogonal basis of $\VR$ with $Q(Z^+_t) = -Q(Z^-_t) = \tfrac{1}{2}$. We can write $X = X^+_t - X^-_t$ with
\begin{equation}
\label{eq:projection}
X^\alpha_t := B(X, Z^\alpha_t) Z^\alpha_t =  \frac{\alpha x_1 t^{-1} + x_2 t}{\sqrt{2}}Z^\alpha_t
\end{equation}
for any $X = (x_1, x_2) \in \VR$ and $\alpha \in \{+, -\}$.
In this basis, the quadratic space $\VR$ becomes $(\Rb^{1, 1}, Q_0)$ with $\Rb^{1, 1} = \Rb^2$ and 
\begin{equation}
  \label{eq:Q0}
Q_0((x, y)) := \frac{x^2 - y^2}{2}.
\end{equation}
Let $\Fc_0$ denote the Fourier transform on $\Rb^{1, 1}$ with respect to $Q_0$, i.e.
$$
\Fc_0(\phi)(x, y) := \int^{\infty}_{-\infty} \int^{\infty}_{-\infty} \phi(w, r) \ebf(xw - yr) dw dr
$$
for any Schwartz function $\phi$ on $\Rb^{1, 1}$.

An important ingredient in forming the theta kernel is the archimedean part of the Schwartz function. In the setting of Hecke, this is given by
\begin{equation}
\label{eq:phi}
\phi_\tau(x, y) := \sqrt{2v} \cdot x \cdot \ebf \lp \frac{x^2}{2} \tau - \frac{y^2}{2} \overline{\tau} \rp.
\end{equation}
As a function on the quadratic space $(\Rb^{1, 1}, Q_0)$, $\phi_\tau$ satisfies
\begin{equation}
  \label{eq:phi_prop}
  \phi_{\tau+1}(W) = \ebf \lp Q_0(W) \rp \phi_\tau(W), \;
\phi_{-1/\tau}(W) = -\tau \Fc_0(\phi_{\tau})(W), \; W \in \Rb^{1, 1}.
\end{equation}
Now for any even, integral lattice $L \subset \VR$, the vector-valued theta function 
$\Theta(\tau,  L; t) := \sum_{h \in L^*/L}  \Theta_{h}(\tau,  L; t) \ef_h$ with
\begin{equation}
  \label{eq:Theta}
\Theta_{h}(\tau, L; t) := \sum_{X \in L + h}
\phi_{\tau}(B(X, Z^+_t), B(X, Z^{-}_t))
\end{equation}
transforms on $\SL_2(\Zb)$ with weight 1 and representation $\rho_{L}$ in the variable $\tau$ by Theorem 4.1 in \cite{Borcherds98}.
Similarly, the image of $-L$ under the involution $\iota: -\VR \to \VR$ is an even, integral lattice in $(\VR, Q)$, and we define
\begin{equation}
  \label{eq:Theta-}
  \Theta(\tau, -L; t)   :=   \Theta(\tau, \iota(-L); t).
\end{equation}

Suppose $L$ is the image of the embedding
\begin{equation}
  \label{eq:embedding}
  \begin{split}
       (L_{\af, M},  Q_{\af, M}) &\hookrightarrow (\VR, Q) \\
\lambda &\mapsto \blam :=  \frac{1}{\sqrt{AM}} (\lambda,  \lambda').
  \end{split}
\end{equation}
for some $D, \af, M$. 
Then $(L, Q) \cong (L_{\af, M},  Q_{\af, M})$ and $\Theta_{h}(\tau, L; t)$ becomes
\begin{equation}
\label{eq:theta_explicit}
\Theta_{h} (\tau, L; t) = 
\frac{\sqrt{v}}{\sqrt{AM}} 
\sum_{
  \begin{subarray}{c}
    \lambda \in \af \df_D^{-1} \\
\lambda - h \in M \af 
  \end{subarray}}
\lp
\lambda't  + \lambda t^{-1}
\rp
\ebf\lp
\frac{
(\lambda t^{-1} + \lambda' t)^2 \tau - (\lambda t^{-1} -  \lambda't)^2 \overline{\tau}
}{4AM}
\rp.
\end{equation}
Similarly, the expression above becomes $\Theta_h(\tau, -L; t)$ after changing $\lambda'$ to $-\lambda'$. 
If $P \subset L$ is a sublattice of finite index and $\psi: \Cb[P^*/P] \to \Cb[L^*/L]$ the $\Gamma$-linear map defined in \eqref{eq:psi}, then it is straightforward to check that 
\begin{equation}
  \label{eq:equivariance}
  \psi(\Theta(\tau, P; t)) = \Theta(\tau, L; t)
\end{equation}
for all $\tau \in \Hc$ and $t \in \Rb^\times_+$. 


\subsection{Theta integral.}
As in the end of last subsection, suppose $(L, Q) = (L_{\af, M}, Q_{\af, M})$. 
The discriminant kernel $\Gamma_L$ is the subgroup of $\SO^+(L) \cong \Zb$ consisting of those units which are congruent to 1 modulo $M\sqrt{D}$. 
Notice that $\varep \in \Gamma_L$ if and only if $\varep' \in \Gamma_L$.
Also, $\Nm(\varep) = 1$ for all $\varep \in \Gamma_L$. 
Let $\Gamma'_L \subset \Gamma_L$ be subgroup of totally positive element and $\varep_L > 1$ its generator.
In \cite{Hecke26}, Hecke calculated the integral
\begin{equation}
\label{eq:vartheta}
\vartheta(\tau, L)
:=  \int^{ \varep_L}_{ 1} \Theta(\tau, L; t) \frac{dt}{t}
= \int^{ \log \varep_L}_{ 0} \Theta(\tau, L; e^{\nu}) d\nu.
\end{equation}

Since $B(\lambda \varep,  Z^\pm_t ) = B(\lambda,  Z^\pm_{\varep' t} )$ for any totally positive $\varep \in \Oc_F^\times$, we can unfold the integral to obtain
\begin{align*}
\int^{\varep_L}_1 \Theta_{h}(\tau, L; t) \frac{dt}{t} &= 
\sum_{\lambda \in \Gamma'_L \backslash  L + h, \; \lambda \neq 0}
\int^\infty_0
 \phi_\tau(B(\lambda, Z^+_t), B(\lambda, Z^-_t)) \frac{dt}{t}\\
&= 
\frac{\sqrt{v}}{\sqrt{AM}} 
\sum_{\lambda \in \Gamma'_L \backslash L + h, \; \lambda \neq 0} 
\ebf\lp
\frac{
\lambda \lambda' u
}{AM}
\rp
\int^\infty_{-\infty}
\lp \lambda' e^\nu  + \lambda e^{-\nu} \rp
\ebf\lp
\frac{
(\lambda e^{-\nu})^2 + (\lambda' e^\nu)^2 
}{2AM}iv
\rp
d\nu.
\end{align*}
Using the identity $e^{-2\pi y} = 2 \sqrt{y} \int^\infty_{-\infty} e^{\nu} e^{-\pi y (e^{2\nu} + e^{-2\nu})} d\nu$, we can evaluate
\begin{align}
\label{eq:integral_id}
\int^\infty_{-\infty}
\lambda' e^\nu
\ebf\lp
\frac{
(\lambda e^{-\nu})^2 + (\lambda' e^\nu)^2 
}{2AM}iv
\rp
d\nu
& =
\frac{1}{2} \sqrt{ \frac{AM}{v} }
\sgn(\lambda') \ebf
\lp
\frac{|\lambda \lambda'| iv }{AM}
\rp, \quad \lambda \neq 0.
\end{align}
If $\lambda \in L + h$ has negative norm, then the integral will vanish. 
Thus 
\begin{equation}
\label{eq:vartheta_fe}
\vartheta(\tau, \pm L) = 
 \sum_{h \in L^*/L} \ef_h 
\sum_{\lambda \in \Gamma'_L \backslash L + h, \; \pm Q(\lambda) > 0}
\sgn(\lambda)
\ebf\lp
|Q(\lambda)| \tau
\rp
\end{equation}
is in $S_{1, \rho_{\pm L}}(\SL_2(\Zb))$. 
Hecke noticed that if there exists $\varep < 0$ in $\Gamma_L$, then $\vartheta(\tau, L)$ vanishes identically \cite[Satz 1]{Hecke26}.
Thus, we can suppose $\Gamma'_L = \Gamma_L$.

Another way to show that $\vartheta(\tau, \pm L)$ is holomorphic without explicitly computing the integral in equation \eqref{eq:vartheta} is to apply the lowering operator $\xi_1$ to $\Theta(\tau, \pm L; t) \frac{dt}{t}$ and show that it is an exact form on $\Rb^\times$.
Then its integral over $\Gamma_L \backslash \Rb^\times_+$ would vanish since this locally symmetry domain has no boundary. 
Let $d_t := t \tfrac{\partial}{\partial t}$ be the invariant vector field on $\Rb^\times_+$, i.e.\ an element in the Lie algebra of $\mathrm{O}(V_\Rb)$. 
We have the following proposition, which is a very special case of a result by Kudla and Millson (see section 8 of \cite{KM90}).
\begin{prop}
  \label{prop:diff_ops}
For all $\tau \in \Hc$ and $t \in \Rb^\times$, we have
\begin{equation}
  \label{eq:diff_ops}
  \xi_1 {L_\tau \Theta(\tau, \pm L; t)} = - d_t \Theta(\tau, \mp L; t).
\end{equation}
\end{prop}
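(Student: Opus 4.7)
The identity \eqref{eq:diff_ops} is purely termwise. Since $\xi_1$ acts only in $\tau$ and $d_t$ only in $t$, both operators commute with the (absolutely and locally uniformly convergent) sum over $X \in L+h$ and act coefficient-wise on $\Cb[L^*/L]$; it therefore suffices to verify the analogous identity for each summand $\phi_\tau(B(X,Z_t^+), B(X,Z_t^-))$. Writing $(a,b) := (B(X,Z_t^+), B(X,Z_t^-))$, a direct differentiation of the formulas in \eqref{eq:Z+} yields the rotation law $d_t a = b$ and $d_t b = a$, reflecting that $d_t = t\partial_t$ lies in the Lie algebra of $\SO^+(V_\Rb)$. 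The involution $\iota:(x_1,x_2)\mapsto(x_1,-x_2)$ defining $\Theta(\tau,-L;t)$ in \eqref{eq:Theta-} replaces these coordinates by $(-b,-a)$.

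With these inputs, the two sides of \eqref{eq:diff_ops} can be computed independently. For the $t$-side, the chain rule applied to $\phi_\tau(-b,-a) = -\sqrt{2v}\,b\,e^{\pi i b^2\tau-\pi i a^2\bar\tau}$, combined with $d_t a = b, d_t b = a$ and $\tau-\bar\tau = 2iv$ to collapse the two cross terms, produces an expression of the form $\sqrt{2v}\,a \cdot P(vb^2)\cdot e^{\pi i b^2\tau-\pi i a^2\bar\tau}$ for an explicit polynomial $P$ of degree one. For the $\tau$-side, applying $\xi_1 = 2iv\,\overline{\partial/\partial\bar\tau}$ to $\phi_\tau(a,b)$ reduces, via the semilinearity of $\xi_1$ and the product rule, to the two Wirtinger building-block computations $\xi_1(\sqrt{2v}) = \sqrt{2v}/2$ and $\xi_1\!\bigl(\ebf(x^2\tau/2-y^2\bar\tau/2)\bigr) = -2\pi v y^2\,\ebf(y^2\tau/2-x^2\bar\tau/2)$. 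The essential structural observation is that the complex conjugation implicit in $\xi_1$ swaps the roles of $\tau$ and $\bar\tau$ (and hence of $x$ and $y$) in the exponent; once we specialize to $(x,y)=(a,b)$, the exponent becomes $\pi i b^2\tau - \pi i a^2\bar\tau$, which is exactly the exponent appearing on the $t$-side after the $\iota$-swap.

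The main technical obstacle is the bookkeeping of signs and factors of $i$: the Wirtinger derivative $\partial_{\bar\tau}v = i/2$, the semilinearity of $\xi_1$, and the sign flips introduced by $\iota$ must conspire so that both the polynomial factor $P(vb^2)$ and the exponential factor line up on the two sides. Once the termwise identity is verified, summing over $X \in L+h$ and assembling the $\ef_h$-components produces \eqref{eq:diff_ops}. The payoff is a structural proof that $\vartheta(\tau,\pm L)$ is holomorphic: substituting \eqref{eq:diff_ops} into \eqref{eq:vartheta} rewrites
\[
\xi_1 \vartheta(\tau,\pm L) = -\int_0^{\log\varepsilon_L} d_t\Theta(\tau,\mp L;e^\nu)\,d\nu = -\bigl[\Theta(\tau,\mp L;\varepsilon_L) - \Theta(\tau,\mp L;1)\bigr],
\]
and this boundary term vanishes by the $\Gamma_L$-invariance of $\Theta(\tau,\mp L;t)$ in the variable $t$.
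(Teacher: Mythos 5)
Your argument is correct and follows essentially the same route as the paper's proof: a termwise computation using $d_t Z^\pm_t = Z^\mp_t$ (equivalently $d_t a = b$, $d_t b = a$), the Wirtinger/semilinear computation of $\xi_1\phi_\tau$, and the antisymmetry $\phi_\tau(-b,-a) = -\phi_\tau(b,a)$ coming from $\iota$, followed by the same Stokes-type deduction of the holomorphicity of $\vartheta(\tau,\pm L)$ from the $\Gamma_L$-invariance in $t$. One small point: carrying your building blocks through gives $2\,\xi_1\phi_\tau(a,b) = d_t\phi_\tau(b,a)$, i.e. the two sides match only up to the factor $2$ that the paper's own proof records and later absorbs (cf.\ the $\tfrac{1}{2}$ in the derivation of \eqref{eq:key_eq}), so you should make that constant explicit rather than asserting that the polynomial factors ``line up.''
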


\begin{proof}
We will show the equality with $L$ on the left hand side and $-L$ on the right hand side.
The other combination of signs can be proved similarly.
  Straightforward calculations show that
$$
2 \xi_1 \phi_\tau (x, y) = \frac{x}{y} (1 - 4\pi v y^2) \phi_\tau (y, x), \;
x \frac{\partial \phi_\tau}{\partial x} = (1 + 2 \pi i x^2 \tau) \phi_\tau, \;
 \frac{\partial \phi_\tau}{\partial y} = -2 \pi i y \overline{\tau} \phi_\tau, \;
d_t Z^\pm_t = Z^\mp_t.
$$
This implies that for all $X \in \VR$,
\begin{equation}
\label{eq:diff_Schwartz}
  2 \xi_1 \phi_\tau(B(X, Z^+_t), B(X, Z^-_t)) = 
  d_t \phi_\tau(B(X, Z^-_t), B(X, Z^+_t)).
\end{equation}
Since $\phi_\tau(B(\iota(X), Z^+_t), B(\iota(X), Z^-_t))  =  - \phi_\tau(B(X, Z^-_t), B(X, Z^+_t))$, equation \eqref{eq:diff_ops} follows immediately from the definition of $\Theta(\tau, \pm L; t)$ in equations \eqref{eq:Theta}, \eqref{eq:Theta-} and \eqref{eq:diff_Schwartz}.
\end{proof}

\begin{rmk}
  The calculations above can be made to work when $L$ is \textit{isotropic} (see \cite{Kudla81}). In that case, the resulting modular form is an Eisenstein series of weight one.
\end{rmk}

  \section{A Special Function.}
\label{sec:special_func}

In this section, we will introduce a continuous function $g_\tau \in L^p(\Rb)$ for $1 \le p \le \infty$ and use it later as a replacement for the usual Schwartz function to form a real-analytic theta series. 
The inspiration of this function comes from Zwegers' work on the Appell-Lerch sum \cite{ZwThesis}.
After adding a non-holomorphic function, the Appell-Lerch sum becomes a modular object (a real-analytic Jacobi form). Jacobi theta functions are prototypical examples of such modular objects and are constructed by averaging a Schwartz function over a lattice. In view of this, it is very natural to expect that the completed Appell-Lerch sum can be decomposed into the average of a special function over a lattice. This line of thought eventually brought us to the function $g_\tau$, which satisfies properties similar to the function $W_z$ in Proposition 1 of Section 2.3 of \cite{HZ76} (there $z$ is in the upper half plane, as our $\tau$ here).
Using $W_z$, Hirzebruch and Zagier constructed a non-holomorphic modular form of weight 2, which is essentially the product of the weight $3/2$ non-holomorphic Eisenstein series and weight $1/2$ theta series. 
Its holomorphic projection is then a modular form, which is the generating function of intersections of Hirzebruch-Zagier divisors. 
In our setting, the function $g_\tau$ will be used to produce a harmonic Maass form of weight $1/2$.
In Section \ref{sec:tTheta}, we will use it to construct a preimage $\tTheta(\tau, L)$ of $\Theta(\tau, L; 1)$ under $\xi_1$, which turns out to be a sum of products of weight $\half$ harmonic Maass forms and weight $\half$ theta functions (see \eqref{eq:tTheta1}).

\subsection{Fourier transform and distribution.}
\label{subsec:FT}
For $1 \le p \le \infty$, let $L^p(\Rb)$ be the space of bounded functions on $\Rb$ with respect to the $L^p$-norm, and $\Sc(\Rb) \subset L^p(\Rb)$ the subspace of Schwartz functions.
Denote the Fourier transform of $g \in L^1(\Rb)$ by 
\begin{equation}
  \label{eq:FT}
  \Fc(g)(x) :=
   \int_{\Rb} g(w) \ebf(wx) dw,
 \end{equation}
 with inverse $\Fc^{-1}: \Sc(\Rb) \to \Sc(\Rb)$ given by $\Fc^{-1}(\varphi)(x) := \Fc(\varphi)(-x)$ for $\varphi \in \Sc(\Rb)$.
In addition, we also have the following standard linear operators $\partial, \mu: \Sc(\Rb) \to \Sc(\Rb)$
\begin{equation}
  \label{eq:partialmu}
  \partial( \varphi)(x) := \frac{d \varphi(x)}{dx}, \; 
\mu( \varphi)(x) := 2\pi i x  \varphi(x),
\end{equation}
which commutes with $\Fc$ as follows
\begin{equation}
  \label{eq:FT_basic_property}
   \Fc \circ \mu = \partial \circ \Fc, \;
   \Fc \circ \partial = - \mu \circ \Fc.
\end{equation}
The Fourier transform of the Gaussian $e^{-\alpha \pi x^2}$ is $e^{-\pi x^2/\alpha}/\sqrt{\alpha}$ for any $\alpha \in \Cb$ with $\Re(\alpha) > 0$. 

For $\tau \in \Hc$, we define another linear operator $\Ec_\tau: \Sc(\Rb) \to \Sc(\Rb)$ 
\begin{equation}
  \label{eq:Ectau}
  \Ec_\tau(\varphi)(x) :=  \ebf \lp \frac{x^2 }{2} \tau \rp \int^x_0 \ebf \lp - \frac{w^2}{2} \tau \rp  \varphi(w) dw.
\end{equation}
It is a bijection with the inverse given by
\begin{equation}
\label{eq:Ectau-1}
\Ec_\tau^{-1}(\varphi)(x) := \ebf \lp \frac{x^2 }{2} \tau \rp 
\frac{d}{dx} \lp  \ebf \lp - \frac{x^2}{2} \tau \rp  \varphi(x) \rp =
(- \tau \cdot \mu + \partial)(\varphi)(x).
\end{equation}
It turns out that the conjugate of $\Ec_{-1/\tau}$ by $\Fc$ equals to $\tau \Ec_{\tau}$.

\begin{lemma}
  \label{lemma:conjugate_op}
For any $\tau \in \Hc$ and $\varphi \in \Sc(\Rb)$, we have
\begin{equation}
  \label{eq:conjugate_op}
  \Fc \circ \Ec_{-1/\tau} \circ \Fc^{-1} \circ \Ec_{\tau}^{-1} (\varphi) = \tau \cdot \varphi.
\end{equation}
\end{lemma}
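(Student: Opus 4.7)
The plan is to reduce the identity to a purely algebraic statement about the operators $\mu$ and $\partial$ on $\Sc(\Rb)$ by conjugating with the Fourier transform. The key observation is that the formula $\Ec_\tau^{-1} = -\tau\mu + \partial$ given in \eqref{eq:Ectau-1} expresses $\Ec_\tau^{-1}$ as a first-order differential operator, and the same formula with $\tau$ replaced by $-1/\tau$ gives $\Ec_{-1/\tau}^{-1} = \tfrac{1}{\tau}\mu + \partial$. So the whole identity boils down to conjugating this simple operator.

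Using \eqref{eq:FT_basic_property} in the form $\Fc \circ \mu \circ \Fc^{-1} = \partial$ and $\Fc \circ \partial \circ \Fc^{-1} = -\mu$, I would compute
\begin{equation*}
\Fc \circ \Ec_{-1/\tau}^{-1} \circ \Fc^{-1} = \tfrac{1}{\tau}(\Fc \circ \mu \circ \Fc^{-1}) + (\Fc \circ \partial \circ \Fc^{-1}) = \tfrac{1}{\tau}\partial - \mu = \tfrac{1}{\tau}(\partial - \tau\mu) = \tfrac{1}{\tau}\Ec_\tau^{-1}.
\end{equation*}
Since $\Ec_\tau, \Ec_{-1/\tau}$ and $\Fc$ are all bijections of $\Sc(\Rb)$ onto itself, I can invert both sides to obtain $\Fc \circ \Ec_{-1/\tau} \circ \Fc^{-1} = \tau \cdot \Ec_\tau$. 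Applying this to $\Ec_\tau^{-1}(\varphi)$ and using $\Ec_\tau \circ \Ec_\tau^{-1} = \mathrm{id}$ yields \eqref{eq:conjugate_op}.

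The only real subtleties are bookkeeping: one has to double-check the signs in the Fourier-transform identities \eqref{eq:FT_basic_property} and confirm the formula \eqref{eq:Ectau-1} for $\Ec_\tau^{-1}$ (which follows from differentiating the defining integral \eqref{eq:Ectau} using the fundamental theorem of calculus). There is nothing analytically delicate because everything takes place in $\Sc(\Rb)$, on which $\Fc$, $\mu$, $\partial$ and $\Ec_\tau$ are all well-defined bijections. Hence I expect no genuine obstacle; the main point is to recognize $\Ec_{-1/\tau}^{-1}$ as the operator that becomes $\tfrac{1}{\tau}\Ec_\tau^{-1}$ after Fourier conjugation.
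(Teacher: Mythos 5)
Your argument is correct and is essentially the paper's own proof: the paper likewise writes $\Ec_\tau^{-1}=-\tau\mu+\partial$ and uses the commutation relations \eqref{eq:FT_basic_property} to get $\Fc^{-1}\circ\Ec_\tau^{-1}=\tau\,\Ec_{-1/\tau}^{-1}\circ\Fc^{-1}$, which is just your conjugation identity $\Fc\circ\Ec_{-1/\tau}^{-1}\circ\Fc^{-1}=\tfrac{1}{\tau}\Ec_\tau^{-1}$ rearranged. Inverting and composing with $\Ec_\tau^{-1}$ as you do is the same final step, so there is no substantive difference.
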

\begin{proof}
  By definition and basic properties of Fourier transform in Eq.\ \eqref{eq:FT_basic_property}, we have
  \begin{align*}
    \Fc^{-1} \circ \Ec^{-1}_\tau  &= 
\Fc^{-1} \circ \lp - \tau \mu  + \partial \rp
= ( \tau \partial + \mu) \circ \Fc^{-1} 
= \tau \Ec^{-1}_{-1/\tau} \circ \Fc^{-1}.
  \end{align*}
from which the claim follows immediately.
\end{proof}

The space of continuous functionals on $\Sc(\Rb)$ is the space of tempered distributions and denoted by $\Sc'(\Rb)$. 
For a distribution $T \in \Sc'(\Rb)$, we can define its derivative $\partial {T} \in \Sc'(\Rb)$ and Fourier transform $\Fc(T) \in \Sc'(\Rb)$ to be the distributions satisfying
$$
\partial {T} (\varphi) := T \lp - \partial {\varphi}{} \rp, \;
  \Fc(T)({\varphi}) := T(\Fc( \varphi))
  $$
for all $\varphi \in \Sc(\Rb)$.
We can embed $L^\infty(\Rb)$ into $\Sc'(\Rb)$ via the map
\begin{equation}
  \label{eq:distribution}
  \begin{split}
\Ic:      L^\infty(\Rb) &\to \Sc'(\Rb) \\
      g &\mapsto \Ic(g) : \varphi \mapsto \int_\Rb \varphi(x) g(x) dx,
      \end{split}
    \end{equation}
whose kernel consists of functions that are zero almost everywhere.
We can expand the input of $\Ic$ to include any measurable function $g$ on $\Rb$ such that the integral $\int_{\Rb} \varphi(x) g(x) dx$ converges absolutely for any $\varphi \in \Sc(\Rb)$.
It is easy to check that $\Ic(\partial g) = \partial \Ic(g)$ if $g$ is differentiable almost everywhere on $\Rb$ with jump singularities, and $\Fc(\Ic(g)) = \Ic(\Fc(g))$ if $g \in L^1(\Rb) \cap L^\infty(\Rb)$.

Let $\delta$ be the Dirac delta distribution, which is characterized by the property that
\begin{equation}
  \label{eq:dirac}
  \delta(\varphi)  =  \varphi(0)
\end{equation}
for all $\varphi \in \Sc(\Rb)$.  
Equivalently, we have $\delta = \partial{ \Ic( H)}{}$, where $H(x) := \tfrac{\sgn(x) + 1}{2} \in L^\infty(\Rb)$ is the Heaviside step function.
One frequently used distribution is the Dirac comb distribution
\begin{equation}
  \label{eq:Sha}
  \Sha(\varphi) := \sum_{n \in \Zb} \varphi(n),
\end{equation}
which satisfies $\Fc(\Sha) = \Sha$ by the Poisson summation formula.
For our purpose, the interesting distribution is in fact the shifted version of $\Sha$ defined by
\begin{equation}
  \label{eq:fh}
  \Sha^+_{}(\varphi) := \sum_{m \in \Zb + \half} \varphi \lp m \rp \ebf \lp \frac{m}{2} \rp.
\end{equation}
It is easy to check that $\Fc(\Sha^+) = i \cdot \Sha^+$.

For any $\tau \in \Hc$, we can define the following even function in $L^\infty(\Rb)$ \begin{equation}
\label{eq:omega}
  \omega_\tau(x) :=
  \ebf \lp - \frac{x^2}{2} \tau \rp \sum_{m \ge |x|, \; m \in \Zb + \half}  
  \ebf \lp \frac{m^2}{2} \tau + \frac{m}{2} \rp,
\end{equation}
which is holomorphic as a function in $\tau \in \Hc$ and satisfies the following key property 
\begin{equation}
  \label{eq:phidiff}
\int_\Rb \varphi(x)
\ebf \lp - \frac{x^2}{2} \tau \rp \frac{d}{dx} \lp \ebf \lp \frac{x^2}{2} \tau \rp \omega_\tau(x) \rp dx  
=
\Sha^+(\varphi)
\end{equation}
for any $\varphi \in \Sc(\Rb)$.
Simple manipulations give us the following lemma.
\begin{lemma}
  \label{lemma:FTinfty}
For any $\tau \in \Hc$ and $\varphi \in \Sc(\Rb)$, we have
  \begin{equation}
    \label{eq:FTinfty}
    \Ic(\omega_\tau)(\varphi) = - \Sha^+(\Ec_\tau(\varphi)), \quad
\Fc(\Ic(\omega_\tau)) = (-i \tau)^{-1} \Ic(\omega_{-1/\tau}).
  \end{equation}
\end{lemma}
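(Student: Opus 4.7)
My plan is to derive both identities from the defining property \eqref{eq:phidiff} of $\omega_\tau$, combined with Lemma \ref{lemma:conjugate_op} and the Poisson-type identity $\Fc(\Sha^+) = i\Sha^+$ noted just above.

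For the first identity, the idea is to integrate by parts in the left-hand side of \eqref{eq:phidiff}. The key observation is that for any $\psi \in \Sc(\Rb)$,
\[
\frac{d}{dx}\bigl(\psi(x)\, \ebf(-x^2\tau/2)\bigr) = \ebf(-x^2\tau/2) \cdot \Ec_\tau^{-1}(\psi)(x),
\]
which is immediate from the definition \eqref{eq:Ectau-1}. Applying distributional integration by parts to \eqref{eq:phidiff}---the boundary terms vanish because $\ebf(x^2\tau/2)\omega_\tau(x)$ is a finite sum that stabilizes to zero for $|x|$ large, while $\varphi(x)\ebf(-x^2\tau/2)$ is Schwartz---transforms the left-hand side into $-\Ic(\omega_\tau)(\Ec_\tau^{-1}\varphi)$. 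Setting this equal to $\Sha^+(\varphi)$ and substituting $\varphi \to \Ec_\tau \varphi$, valid since $\Ec_\tau$ is a bijection on $\Sc(\Rb)$, yields the first identity.

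For the second identity, applying the first identity to both sides reduces the problem to showing
\[
\Sha^+(\Ec_\tau \Fc \varphi) = (-i\tau)^{-1}\, \Sha^+(\Ec_{-1/\tau}\, \varphi).
\]
Rearranging Lemma \ref{lemma:conjugate_op} gives $\Ec_{-1/\tau} = \tau \cdot \Fc^{-1} \Ec_\tau \Fc$, so
\[
\Sha^+(\Ec_{-1/\tau}\, \varphi) = \tau \cdot \Sha^+(\Fc^{-1} \Ec_\tau \Fc\, \varphi).
\]
The identity $\Fc(\Sha^+) = i\Sha^+$ inverts to $\Sha^+(\Fc^{-1} \psi) = -i\, \Sha^+(\psi)$, which combines with the previous line to give $\Sha^+(\Ec_{-1/\tau}\, \varphi) = -i\tau \cdot \Sha^+(\Ec_\tau \Fc\, \varphi)$. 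Dividing through by $-i\tau$ produces the desired identity.

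The only real subtlety is the distributional integration by parts in the first identity, since $\omega_\tau$ has jump discontinuities at the nonzero half-integers. However, because the derivative in \eqref{eq:phidiff} is already understood distributionally and its partner $\varphi(x)\ebf(-x^2\tau/2)$ is genuinely Schwartz, the transfer of the derivative is clean and the boundary contributions at infinity vanish unambiguously. Once the first identity is in hand, the second follows by a purely formal manipulation using only the two input ingredients.
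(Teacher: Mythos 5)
Your proof is correct and follows essentially the same route as the paper: the first identity is the same integration by parts linking \eqref{eq:phidiff} to $\Ic(\omega_\tau)$ (you merely run it in the reverse direction and then invert $\Ec_\tau$), and the second identity uses exactly the paper's ingredients, namely the first identity, Lemma \ref{lemma:conjugate_op}, and $\Fc(\Sha^+)=i\,\Sha^+$. One harmless slip: $\ebf(x^2\tau/2)\omega_\tau(x)$ is not a finite sum but the tail of a convergent series, which tends to $0$ as $|x|\to\infty$; in any case the boundary term vanishes because $\varphi$ is Schwartz and $\omega_\tau\in L^\infty(\Rb)$.
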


\begin{proof}
By definition, we can rewrite
\begin{align*}
    \Ic(\omega_\tau)(\varphi) &= \int_\Rb \lp \partial_x \int^x_0 \ebf \lp - \frac{w^2}{2} \tau \rp \varphi(w) dw \rp
\ebf \lp \frac{x^2}{2} \tau \rp \omega_\tau(x) dx\\
&= \lim_{x_0 \to \infty} \Ec_\tau(\varphi) \omega_\tau \mid^{x_0}_{-x_0} 
-
\int_\Rb \Ec_\tau(\varphi)(x)
\ebf \lp - \frac{x^2}{2} \tau \rp \frac{d}{dx} \lp \ebf \lp \frac{x^2}{2} \tau \rp \omega_\tau(x) \rp dx \\
&= - \Sha^+(\Ec_\tau(\varphi)),
\end{align*}
where the last step follows from Eq.\ \eqref{eq:phidiff}.
The second claim in \eqref{eq:FTinfty} is then a consequence of the formal calculations
\begin{align*}
  \Fc(\Ic(\omega_\tau))(\varphi) &= \Ic(\omega_\tau)(\Fc(\varphi)) = - \Sha^+(\Ec_\tau \circ \Fc(\varphi)) = - \Sha^+(\Fc \circ \Ec_{-1/\tau}(\varphi))/\tau \\
&= - (\Fc(\Sha^+))(\Ec_{-1/\tau}(\varphi))/\tau =  - (-i \tau)^{-1} \Sha^+(\Ec_{-1/\tau}(\varphi)) = (-i \tau)^{-1} \Ic(\omega_{-1/\tau})(\varphi)
\end{align*}
using Lemma \ref{lemma:conjugate_op}.
\end{proof}

\subsection{A special function in $L^p(\Rb)$.}
Before constructing $g_\tau$, we first recall the complementary error function
\begin{equation}
  \label{eq:erfc}
  \erfc(x) := \frac{2}{\sqrt{\pi}} \int^\infty_x e^{-t^2} dt = 1 - \erf(x),
\end{equation}
which decays square exponentially, and a special modular form 
\begin{equation}
  \label{eq:eta3}
  \eta^3(\tau) :=
 -{i} \cdot \sum_{m \in \Zb + \half} m \cdot \ebf \lp \frac{m^2}{2}\tau + \frac{m}{2} \rp
= q^{1/8} \prod_{n \ge 1} ( 1- q^n)^3.
\end{equation}
It satisfies the transformation properties
\begin{equation}
  \label{eq:eta3_transform}
  \eta^3(\tau + 1) = \ebf \lp \frac{1}{8} \rp \eta^3(\tau) ,\;
(-i \tau)^{-3/2} \eta^3(-1/\tau) = \eta^3(\tau)
\end{equation}
and is a modular form of weight $3/2$.

Now for $\tau = u + iv \in \Hc$, we define the following functions
\begin{equation}
  \label{eq:g0}
  \begin{split}
    \gh_\tau(x) &:= 
 \ebf \lp - \frac{x^2}{2} \tau \rp \frac{\sgn(x)}{i \cdot  \eta^3(\tau)} 
\sum_{m > |x|, \;m \in \Zb + \half}
\lp m - |x| \rp \ebf \lp \frac{m^2}{2} \tau + \frac{m}{2} \rp,
\\
\gn_\tau(x) &:= \ebf \lp - \frac{x^2}{2} \tau \rp \frac{\sgn(x)}{2}
 \erfc( \sqrt{2\pi v} |x|),\\
g_\tau(x) &:= \gh_\tau(x) - \gn_\tau(x).
  \end{split}
\end{equation}
The function $\gn_\tau$ decays as a Schwartz function, but is discontinuous at 0. 
On the other hand, $\gh_\tau$ has the same type of discontinuity at 0, but is only piecewise differentiable and does not decay quickly enough. 
Under the operator $\tau \mu + \partial$, the function $\gh_\tau$ is closely related to $\omega_\tau$. 
By themselves alone, these two functions would behave strangely under Fourier transform. 
Fortunately, their difference $g_\tau$ enjoys extremely nice properties.
\begin{prop}
  \label{prop:gtau}
The function $g_\tau$ is in $L^p(\Rb)$ for $1 \le p \le \infty$.
It is odd, continuous on $\Rb$ and differentiable on $\Rb \backslash ( \Zb + \half)$. 
Furthermore for all $x \in \Rb$, it satisfies
\begin{enumerate}
\item $\xi_1{ g_\tau(x)} = \frac{\sqrt{2v}}{2} x \ebf \lp \frac{x^2}{2} \tau \rp$,
\item $g_{\tau + 1}(x) = \ebf \lp - \frac{x^2}{2} \rp g_\tau(x)$,
\item $\Fc(g_{-1/\tau})(x) = -i \sqrt{-i \tau} g_\tau(x)$.
\end{enumerate}
\end{prop}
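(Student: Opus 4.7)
The plan is to establish the stated properties in order, with the $S$-transformation (3) as the main obstacle.

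\textbf{Analytic properties.} Oddness of $g_\tau$ is immediate from the common $\sgn(x)$ factor. The only candidates for discontinuity are $x=0$ and $x \in \Zb + \tfrac{1}{2}$. At $x=0$, the jump of $\gn_\tau$ is $\erfc(0) = 1$, and the one-sided limit $\gh_\tau(0^+)$ equals $S^+/(i\eta^3(\tau))$ with $S^+ := \sum_{m > 0,\, m \in \Zb+\tfrac{1}{2}} m\, \ebf(m^2\tau/2 + m/2)$; the substitution $m \mapsto -m$ shows $S^+$ equals the analogous sum $S^-$ over negative $m$, and since $S^+ + S^- = i\eta^3(\tau)$ by the definition of $\eta^3$, we obtain $S^+ = \tfrac{i\eta^3(\tau)}{2}$, so $\gh_\tau(0^+) = \tfrac{1}{2}$ and the jumps cancel in $g_\tau$. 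At $x \in \Zb + \tfrac{1}{2}$, continuity of $\gh_\tau$ holds because the term $m = |x|$ is killed by $(m-|x|)$, and $\gn_\tau$ is smooth there. Direct computation of one-sided derivatives shows $g_\tau$ is differentiable at $0$, while the jump of $\omega_\tau$ at $\Zb + \tfrac{1}{2}$ prevents differentiability there. For $L^p$-membership, the classical $\erfc$-bound gives $\gn_\tau = O(|x|^{-1} e^{-\pi v x^2})$ (the $e^{-2\pi v x^2}$ decay outruns the $e^{\pi v x^2}$ growth of $\ebf(-x^2\tau/2)$), and a direct bound on the series gives $\gh_\tau = O(|x|^{-2})$ as $|x| \to \infty$.

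\textbf{Properties (1) and (2).} Since $\gh_\tau$ is holomorphic in $\tau$, $\xi_1 \gh_\tau = 0$. The chain rule yields $\partial_{\bar\tau} \erfc(\sqrt{2\pi v}|x|) = -\tfrac{i|x|}{\sqrt{2v}}\, e^{-2\pi v x^2}$; combined with $\ebf(-x^2\tau/2)\, e^{-2\pi v x^2} = \ebf(-x^2\bar\tau/2)$, applying $\xi_1 = 2iv \overline{\partial_{\bar\tau}}$ yields $\xi_1 \gn_\tau = -\tfrac{\sqrt{2v}}{2}\, x\, \ebf(x^2\tau/2)$, which produces (1) since $g_\tau = \gh_\tau - \gn_\tau$. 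For (2), both summands inherit the factor $\ebf(-x^2/2)$ from $\ebf(-x^2(\tau+1)/2)$; the remaining $\tau \mapsto \tau+1$ action on $\gh_\tau$ multiplies each term of the series by $\ebf(m^2/2) = \ebf(\tfrac{1}{8})$ (since $m^2 - \tfrac{1}{4} \in 2\Zb$ for $m \in \Zb+\tfrac{1}{2}$), which cancels against the cocycle $\eta^3(\tau+1) = \ebf(\tfrac{1}{8})\, \eta^3(\tau)$.

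\textbf{Property (3), the $S$-transformation.} Termwise differentiation of the series defining $\gh_\tau$ yields the key identity
\[
\partial_x\bigl(\ebf(x^2\tau/2)\, \gh_\tau(x)\bigr) \,=\, -\frac{\ebf(x^2\tau/2)}{i\, \eta^3(\tau)}\, \omega_\tau(x), \qquad x \notin \Zb + \tfrac{1}{2};
\]
integrating from $0$ (using $\ebf(x^2\tau/2)\gh_\tau(0^\pm) = \pm \tfrac{1}{2}$) produces the integral representation
\[
g_\tau(x) \,=\, \tfrac{1}{2}\, \ebf(-x^2\tau/2)\, \erf(\sqrt{2\pi v}\, x) \,-\, \frac{\ebf(-x^2\tau/2)}{i\, \eta^3(\tau)} \int_0^x \omega_\tau(w)\, \ebf(w^2\tau/2)\, dw.
\]
I will compute the Fourier transforms of the two pieces separately. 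For the first, the identity $\tfrac{1}{2}\, \erf(\sqrt{2\pi v}\, x) = \sqrt{2v}\int_0^x e^{-2\pi v s^2}\, ds$ converts it into an iterated Gaussian integral of Mordell type; swapping the order of integration and applying the standard Fourier transform of a Gaussian produces the automorphy factor $\sqrt{-i\tau}$ together with a remainder from the endpoint of the inner integral. For the second piece, Lemma \ref{lemma:FTinfty} supplies $\Fc(\Ic(\omega_{-1/\tau})) = -i\tau\, \Ic(\omega_\tau)$, while $\eta^3(-1/\tau) = (-i\tau)^{3/2}\eta^3(\tau)$ provides exactly the extra $\sqrt{-i\tau}$-factor needed to match. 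Summing both contributions and verifying that the distributional boundary terms (from the jumps of $\sgn(x)$ and $\omega_\tau$) cancel yields the claimed identity $\Fc(g_{-1/\tau})(x) = -i\sqrt{-i\tau}\, g_\tau(x)$. The main obstacle is the careful bookkeeping of these boundary corrections and the correct tracking of the branch of $\sqrt{-i\tau}$.
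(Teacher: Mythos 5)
Your preliminaries and your proofs of (1) and (2) are sound and essentially the paper's: the jump computation giving $\gh_\tau(0^\pm)=\pm\half$, the holomorphy of $\gh_\tau$ in $\tau$, and the $\xi_1$-computation for $\gn_\tau$ are exactly what is needed, and your integral representation of $g_\tau$ is just the integrated form of the paper's key identity \eqref{eq:diff_identity}. The genuine gap is the next step in your treatment of (3): the two pieces of that representation cannot be Fourier transformed separately. Since $\int_0^{\pm\infty}\omega_\tau(w)\ebf(w^2\tau/2)\,dw=\pm\sum_{m>0,\,m\in\Zb+\half}m\,\ebf(m^2\tau/2+m/2)=\pm\tfrac{i\eta^3(\tau)}{2}$, your second piece tends to $\mp\half\ebf(-x^2\tau/2)$ as $x\to\pm\infty$, while the first piece tends to $\pm\half\ebf(-x^2\tau/2)$; each piece has modulus growing like $e^{\pi v x^2}$, and only their difference decays. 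Hence neither piece lies in $L^1(\Rb)$, neither defines a tempered distribution via $\Ic$, the ``Mordell-type'' iterated integral you propose for the first piece is a divergent double integral (so the interchange of integration is unavailable), and Lemma \ref{lemma:FTinfty} does not apply to the second piece as stated: it transforms $\Ic(\omega_\tau)$, not the chirped primitive $\ebf(-x^2\tau/2)\int_0^x\omega_\tau(w)\ebf(w^2\tau/2)\,dw$. The ``boundary terms'' you hope to cancel at the end are in fact divergences that must cancel \emph{before} any Fourier transform is taken. The paper never splits: it pairs $g_\tau$ against a Schwartz function, integrates by parts once to replace $g_\tau$ by $\gamma_\tau(x)=-\omega_\tau(x)/(i\eta^3(\tau))+\sqrt{2v}\,\ebf(-x^2\overline{\tau}/2)$ paired against $\Ec_\tau(\varphi)$ (equation \eqref{eq:FT1}), transforms $\gamma_\tau$ as a whole using Lemma \ref{lemma:FTinfty}, the Gaussian, and \eqref{eq:eta3_transform}, then moves $\Ec_\tau$ through $\Fc$ via Lemma \ref{lemma:conjugate_op}; this yields $\Fc(g_\tau)=-i(-i\tau)^{-1/2}g_{-1/\tau}$ almost everywhere, and continuity upgrades it to an identity of functions. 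To repair your argument you would either have to reproduce this distributional computation (in which case the splitting buys nothing), or regroup your two pieces into decaying ones — and the natural regrouping simply returns you to $\gh_\tau-\gn_\tau$, whose individual transforms are not of the required shape.

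A secondary flaw: the pointwise bound $\gh_\tau(x)=O(|x|^{-2})$ is false. Taking $|x|=m_0-\epsilon$ with $m_0\in\Zb+\half$ and $\epsilon\asymp (v|x|)^{-1}$, the single dominant term of the series is $\asymp \epsilon\, e^{-2\pi v m_0\epsilon}\asymp |x|^{-1}$, so the correct uniform bound is only $O(|x|^{-1})$, which does not give $L^1$ directly. What is true — and what the paper proves by substituting $t=m_w-|w|$ and integrating in $t\in[0,1)$ before summing over $m$ — is that $\int_n^{n+1}|\gh_\tau(x)|\,dx=O(n^{-2})$. The $L^1$ statement is genuinely needed later (e.g.\ to have $\Fc(g_\tau)$ as a continuous function and for the Poisson summation in Proposition \ref{prop:converge}), so this averaged estimate, not a pointwise one, is the argument you should supply.
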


\begin{proof}
From the definition, it is easy to see that $g_\tau \in L^\infty(\Rb)$ is odd and continuous on $\Rb$.
Also, the functions $2 \ebf(\tfrac{x^2}{2} \tau) \gn_\tau(x) - \sgn(x) = - \erf(\sqrt{2\pi v}x)$ and
$$
2\ebf\lp \frac{x^2}{2} \tau \rp \gh_\tau(x) - \sgn(x) 
=
 \frac{2 i x}{\eta^3(\tau)} \ebf\lp \frac{x^2}{2} \tau \rp \omega_\tau(x)
+
\frac{2 i \sgn(x)}{ \eta^3(\tau)} 
\sum_{\begin{subarray}{c} 0 < m < |x| \\ m \in \Zb + \half \end{subarray}} 
 m  \ebf \lp \frac{m^2}{2} \tau + \frac{m}{2} \rp
$$
are differentiable on $\Rb \backslash (\Zb + \half)$, hence so is their difference.
Here, the function $\omega_\tau$ was defined in Eq.\ \eqref{eq:omega}.
To show that $g_\tau \in L^1(\Rb)$, it suffices to prove that the integral
$$
\int_\Rb (m_w - |w|) e^{-\alpha (m_w^2 - w^2)} dw 
= 2\int^\infty_0  (m_w - w) e^{-\alpha (m_w^2 - w^2)} dw 
$$
converges for any $\alpha > 0$, where 
\begin{equation}
  \label{eq:mw}
m_w :=  \lceil |w| + \tfrac{1}{2} \rceil - \tfrac{1}{2} \in \Zb + \tfrac{1}{2}
\end{equation}
is the least half integer greater than or equal to $|w|$.   
By writing $t := m_w - |w|$ with $t \in [0, 1)$, we have
$$
\int^\infty_0  (m_w - w) e^{-\alpha (m_w^2 - w^2)} dw 
< \sum_{m \in \Zb + \half, m > 0} \int^1_0 t e^{-\alpha t (2m - t)} dt
< \frac{ \zeta(2) e^{\alpha} }{\alpha^2}.
$$
Therefore, $g_\tau \in L^1(\Rb) \cap L^\infty(\Rb) \subset L^p(\Rb)$ for all $1 \le p \le \infty$ and $\Fc(g_\tau) \in L^\infty(\Rb) \cap C^1(\Rb)$.

For the last three claims, (1) and (2) are consequences of straightforward calculations from the definition.
The last claim takes a little more calculations to see and we use the notations in section \ref{subsec:FT}.
Since $g_\tau$ is differentiable on $\Rb \backslash (\Zb + \half)$, we have
\begin{equation}
  \label{eq:diff_identity}
  \ebf \lp - \frac{x^2}{2} \tau \rp  \frac{d}{dx} \lp \ebf \lp \frac{x^2}{2} \tau \rp g_\tau(x) \rp 
=
- \frac{\omega_\tau(x)}{i \eta^3(\tau)} + \sqrt{2v} \ebf \lp - \frac{x^2}{2} \overline{\tau} \rp  =: \gamma_\tau(x) \in L^\infty(\Rb)
\end{equation}
for $x \in \Rb \backslash (\Zb + \half)$.
By the same argument in the proof of Lemma \ref{lemma:FTinfty}, we know that
\begin{equation}
\label{eq:FT1}
\Ic(g_\tau)(\varphi) = - \Ic(\gamma_\tau)(\Ec_\tau(\varphi)).
\end{equation}
Now, Lemma \ref{lemma:FTinfty} and \eqref{eq:eta3_transform} implies 
\begin{equation}
\label{eq:FT1a}
\Fc(\Ic(\gamma_\tau)) =   \sqrt{-i \tau} \Ic(\gamma_{-1/\tau}).
\end{equation}
Proceeding as in the formal argument in the proof of Lemma \ref{lemma:FTinfty} gives us
\begin{align*}
  \Ic(\Fc(g_\tau))(\varphi) &= -\Ic(\gamma_\tau)(\Ec_\tau \circ \Fc(\varphi)) 
= - \Ic(\gamma_\tau) (\Fc \circ \Ec_{-1/\tau} (\varphi))/\tau 
= - \Fc(\Ic(\gamma_\tau))(\Ec_{-1/\tau} (\varphi))/\tau \\
&= -i (-i \tau)^{-1/2} \Ic(\gamma_{-1/\tau})(\Ec_{-1/\tau}(\varphi)) = -i (-i\tau)^{-1/2} \Ic(g_{-1/\tau})(\varphi)
\end{align*}
for any $\varphi \in \Sc(\Rb)$.
This implies that $\Fc(g_\tau) + i (-i\tau)^{-1/2} g_{-1/\tau} \in L^\infty(\Rb)$ vanishes almost everywhere. Since this difference is also continuous, it must be identically zero. Changing $\tau$ to $-1/\tau$ then proves the last claim.
\end{proof}

\subsection{Harmonic Maass form of weight $\frac{1}{2}$.}
\label{subsec:hMf1/2}
In this section, we will use the function $g_\tau$ from the previous section to construct some vector-valued harmonic Maass forms of weight $\half$. 
The basic idea is to treat $g_\tau$ as a Schwartz function and average it over the elements in (a translate of) a lattice.
To make this idea work, we need the following results.
\begin{lemma}
  \label{lemma:denom}
If $x \in \frac{1}{b} \Zb$ for some $b \in 2 \Nb$, then 
$b \cdot \ebf \lp \frac{x^2}{2} \tau \rp  \gh_{\tau}(x)  \in \Zb \llbracket q \rrbracket$ with $q = \ebf(\tau)$ and $\gh_\tau(x)$ is holomorphic in the interior of the upper half plane with $\ord_\infty(\gh_\tau(x)) \ge \frac{|x|}{b} + \frac{1}{2b^2} - \frac{1}{8}$. 
\end{lemma}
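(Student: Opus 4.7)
The plan is to start from the explicit formula
$$
\ebf\!\lp\tfrac{x^2}{2}\tau\rp \gh_\tau(x) \;=\; \frac{\sgn(x)}{i\,\eta^3(\tau)} \sum_{\substack{m \in \Zb + 1/2 \\ m > |x|}} (m - |x|)\, \ebf\!\lp\tfrac{m^2}{2}\tau + \tfrac{m}{2}\rp,
$$
which is immediate from the definition of $\gh_\tau$ in \eqref{eq:g0}, and then to analyze the resulting $q$-expansion coefficient by coefficient. Holomorphicity in $\tau \in \Hc$ is essentially free: the Jacobi triple product gives $\eta^3(\tau) = q^{1/8}\prod_{n\geq 1}(1-q^n)^3$, so $\eta$ has no zeros on $\Hc$, and the series defining $S(\tau,x) := \sum_{m>|x|}(m-|x|)\ebf(\tfrac{m^2}{2}\tau + \tfrac{m}{2})$ converges absolutely thanks to its quadratic exponents.

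Next I would check that every ingredient of the $q$-expansion of $b\cdot\ebf(\tfrac{x^2}{2}\tau)\gh_\tau(x)$ lies in $\Zb$. For $m = k+\tfrac{1}{2}$ with $k \in \Zb_{\geq 0}$ one has $\ebf(m/2) = i(-1)^k$, so $\ebf(m/2)/i \in \{\pm 1\}$; and since $b \in 2\Nb$, both $b|x|$ (using $x \in \tfrac{1}{b}\Zb$) and $b\,m = bk + b/2$ are integers, hence $b(m-|x|) \in \Zb$. The identity $1/\eta^3(\tau) = q^{-1/8}P(q)$ with $P(q) \in 1 + q\,\Zb\llbracket q\rrbracket$ then shows the coefficients in the product are integers. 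Finally, the $q$-exponents that appear have the form $\tfrac{m^2}{2} + n - \tfrac{1}{8}$ with $n \in \Zb_{\geq 0}$, and for $m = (2k+1)/2$ a direct computation gives $\tfrac{m^2}{2} - \tfrac{1}{8} = \tfrac{k(k+1)}{2} \in \Zb_{\geq 0}$. So every $q$-exponent is a non-negative integer and $b\cdot\ebf(\tfrac{x^2}{2}\tau)\gh_\tau(x) \in \Zb\llbracket q\rrbracket$.

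The last step is the lower bound on $\ord_\infty$. The smallest $q$-exponent in the product is $\tfrac{m_x^2}{2} - \tfrac{1}{8}$, where $m_x$ denotes the minimal element of $\Zb + \tfrac{1}{2}$ exceeding $|x|$. The decisive use of the hypothesis $b \in 2\Nb$ occurs here: since $b/2 \in \Zb$, we have $b\,m_x \in \Zb$, and also $b|x| \in \Zb$, so $b(m_x - |x|)$ is a \emph{positive} integer, hence $\geq 1$. Therefore $m_x \geq |x| + \tfrac{1}{b}$, and consequently
$$
\tfrac{m_x^2}{2} - \tfrac{1}{8} \;\geq\; \tfrac{(|x| + 1/b)^2}{2} - \tfrac{1}{8} \;=\; \tfrac{x^2}{2} + \tfrac{|x|}{b} + \tfrac{1}{2b^2} - \tfrac{1}{8} \;\geq\; \tfrac{|x|}{b} + \tfrac{1}{2b^2} - \tfrac{1}{8},
$$
which is the claimed bound. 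The proof is essentially bookkeeping and the only subtle point is to invoke the parity of $b$ at the right moment to obtain the sharp gap $m_x - |x| \geq 1/b$; there is no serious obstacle.
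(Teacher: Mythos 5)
Your proof is correct and takes essentially the same route as the paper: the whole content is that $m \in \Zb+\tfrac12$, $x \in \tfrac1b\Zb$ and $2 \mid b$ force $m \ge |x| + \tfrac1b$, hence $\tfrac{m^2-x^2}{2} \ge \tfrac{|x|}{b} + \tfrac1{2b^2}$, with the $-\tfrac18$ supplied by the $\eta^3$ in the denominator; the paper records the integrality statement as clear from the definition, while you verify the coefficients and exponents explicitly, which is fine. One cosmetic point: your final chain bounds the order of $\ebf\lp\tfrac{x^2}{2}\tau\rp \gh_\tau(x)$ rather than of $\gh_\tau(x)$ itself, but your middle equality already isolates the shift $\tfrac{x^2}{2}$, and subtracting it gives exactly $\ord_\infty(\gh_\tau(x)) \ge \tfrac{|x|}{b} + \tfrac1{2b^2} - \tfrac18$, so nothing is missing.
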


 \begin{proof}
The first claim is clear from the expression of $\gh_\tau(x)$ in \eqref{eq:g0}.
For the second claim, notice that the inequality $m > |x|$ implies $m \ge |x| + \frac{1}{b}$ when $m \in \Zb + \half$ and $x \in \frac{1}{b} \Zb$ with $2 \mid b$.
Thus, $m^2 - x^2 \ge \frac{2|x| + 1/b}{b}$ and we obtain the bound on $\ord_\infty (\gh_\tau(x))$. 
The $-\frac{1}{8}$ comes from the $\eta^3(\tau)$ in the denominator, and is responsible for a possible pole at $i \infty$. 
 \end{proof}

\begin{prop}
  \label{prop:converge}
For any fixed $N \in 2 \Nb$ and $\tau \in \Hc$, the infinite sum 
\begin{equation}
\label{eq:inf_sum}
G_\tau(x) := \sum_{n \in \Zb } g_\tau( Nn + x)
\end{equation}
converges absolutely for all $x \in \Rb$ and uniformly for $x$ in compact subsets of $\Rb \backslash (\Zb + \half)$.
For $x_0 \in \Rb$, it satisfies
\begin{equation}
  \label{eq:PSF}
\lim_{x \to x_0} \tau^{-1/2}  G_{-1/\tau}(x)
= \frac{\ebf(1/8)}{N} \sum_{n \in \Zb} g_{\tau} \lp  \frac{n}{N} \rp \ebf \lp \frac{n x_0}{N} \rp.
\end{equation}
For $x_0 \in \Zb + \half$, the function $G_\tau(x)$ satisfies
\begin{equation}
  \label{eq:limit}
  \lim_{x \to x_0} G_\tau(x) = G_\tau(x_0) + \frac{\ebf(x_0/2)}{2\pi N \eta^3(\tau)}.
\end{equation}
\end{prop}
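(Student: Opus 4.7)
To establish absolute and uniform convergence, I would split $g_\tau = \gh_\tau - \gn_\tau$. The Gaussian decay of $\gn_\tau$ immediately yields uniform convergence of $\sum_n \gn_\tau(Nn+x)$ over all $x \in \Rb$. For $\gh_\tau$, the defining series \eqref{eq:g0} combined with Lemma \ref{lemma:denom} produces an estimate of the form $|\gh_\tau(y)| \ll_\tau \delta_y\, e^{-\pi v \delta_y (2|y|+\delta_y)} + O(e^{-\pi v (2|y|+3)})$, where $\delta_y := m_y - |y| \in (0,1]$ with $m_y$ as in \eqref{eq:mw}. On a compact subset $K \subset \Rb \setminus (\Zb + \half)$, the quantity $\delta_{Nn+x}$ is uniformly bounded below by a positive constant, giving exponential decay in $|n|$; at $x_0 \in \Zb + \half$, one has $\delta_{Nn+x_0} = 1$, again giving exponential decay. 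The claimed absolute (uniform) convergence follows.

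For the Poisson-type identity \eqref{eq:PSF}, I would apply the classical Poisson summation formula to $g_{-1/\tau} \in L^1(\Rb)$, using Proposition \ref{prop:gtau}(3) to identify its Fourier transform as $-i\sqrt{-i\tau}\,g_\tau$. Exploiting the oddness of $g_\tau$ to substitute $n \mapsto -n$ and simplifying $\sqrt{-i\tau}/\sqrt{\tau} = e^{-i\pi/4}$, the overall constant collapses to $\ebf(1/8)/N$, giving \eqref{eq:PSF}. The right-hand side is an absolutely convergent Fourier series, hence continuous in $x_0$; this matches the $\lim_{x \to x_0}$ on the left and accommodates the possible discontinuity of $G_{-1/\tau}$ at half-integers addressed in \eqref{eq:limit}.

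For the jump formula \eqref{eq:limit}, only $\gh_\tau$ contributes (as $\gn_\tau$ is continuous with Gaussian decay, so its periodization is smooth away from $0$). The source is the strict inequality $m > |y|$ in the defining sum: as $x$ crosses $y_n := Nn + x_0 \in \Zb + \half$, the index $m = y_n$ switches in or out, contributing an extra summand of size $\epsilon := |x - x_0|$ with phase $\ebf(y_n^2 \tau/2 + y_n/2)$. Although each such term individually vanishes with $\epsilon$, summing over all $n$ of the appropriate sign collapses to a geometric series $\sum_{n \ge 0} \ebf(N\epsilon\tau)^n$ whose leading singularity $(-2\pi i N \epsilon \tau)^{-1}$ as $\epsilon \to 0^+$ cancels the linear factor $\epsilon$. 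Using $\ebf(Nn/2) = 1$ because $N$ is even, the phases $\ebf(y_n/2)$ collapse to $\ebf(x_0/2)$; combining with the prefactor $1/(i\eta^3(\tau))$ and tracking constants carefully yields the claimed jump $\ebf(x_0/2)/(2\pi N \eta^3(\tau))$. The smooth $\epsilon$-perturbations of the non-switching $m$-terms, and the contribution from $n$ of opposite sign (whose $m$-set is unchanged), each contribute only $O(\epsilon)$ and vanish; a symmetric computation recovers the same one-sided limit from the other side.

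The main obstacle is this last step. Although $g_\tau$ is itself continuous, its periodization $G_\tau$ fails to be continuous at half-integers precisely because of the non-uniform convergence of the defining sum. The delicate part is isolating the switching contribution from the bulk smooth perturbations and tracking every phase factor through the geometric sum, so that the precise numerical constant in \eqref{eq:limit} emerges.
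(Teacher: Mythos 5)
Your proposal follows the paper's proof essentially step for step: the convergence argument isolates the same leading term indexed by the nearest half-integer $m_x$ (the paper's auxiliary function $f_\tau$) and uses its exponential decay away from $\Zb+\half$ together with the Schwartz-type decay of the remainder; identity \eqref{eq:PSF} is obtained by the same application of Poisson summation combined with Proposition \ref{prop:gtau}(3); and the jump \eqref{eq:limit} comes from the same one-sided geometric series $\epsilon\sum_{k\ge 0}\ebf(\epsilon N k\tau)$ over the switching terms, with the identical phase bookkeeping via $N\in 2\Nb$ and $\ebf(x_0)=-1$. The only blemishes are cosmetic and shared with the paper's own write-up (e.g.\ $\delta_y=m_y-|y|$ is $0$, not $1$, at half-integers with the paper's definition of $m_y$ --- what matters is that the smallest admissible $m>|y|$ then has gap $1$ --- and the final constant from $\epsilon/(1-\ebf(\epsilon N\tau))\to(-2\pi i N\tau)^{-1}$ should be tracked carefully against the stated \eqref{eq:limit}), so the argument is correct and essentially the same as the paper's.
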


\begin{rmk}
\label{rmk:fail}
If $N \in \Rb \backslash \Nb$, then the argument of the proof will not be sufficient to show the absolute convergence of the sum defining $G_\tau$.
\end{rmk}

\begin{proof}
By definition, $g_\tau = g^+_\tau - g^*_\tau$ and $g^*_\tau$ decays like a Schwartz function. 
For $x \in \Rb$, recall $m_x \in \Zb + \half$ as defined in \eqref{eq:mw} and denote
$$
f_\tau(x):=  \frac{\sgn(x)}{i\eta^3(\tau)} 
(m_x - |x|)
\ebf \lp
\frac{m_x^2 - x^2}{2} \tau + \frac{m_x}{2}
\rp.
$$
Since $\sum_{n \in \Zb} (g_\tau - f_\tau)(Nn + x)$ converges absolutely and uniformly for $x \in \Rb$, it suffices to analyze the convergence of $\sum_{n \in \Zb} f_\tau(Nn + x)$.
When $x \in \Zb + \half$, $f_\tau(Nn + x) = 0$ for all $n \in \Zb$. Otherwise, the minimum distance between $x$ and $\Zb + \half$ is $\epsilon > 0$, and $|f_\tau(Nn + x)| \ll_{\tau, x} e^{-\epsilon n v}$.
Therefore, the sum defining $G_\tau(x)$ converges absolutely and uniformly for $x$ in a compact subset of $\Rb \backslash (\Zb + \half)$.
Note that this argument does not work for $N \in \Rb \backslash \Nb$.

Applying the Poisson summation formula and Prop.\ \ref{prop:gtau} now gives us
$$
G_{\tau}(x)
= \frac{i}{N\sqrt{-i\tau}} \sum_{n \in \Zb} g_{-1/\tau} \lp  \frac{n}{N} \rp \ebf \lp \frac{n x}{N} \rp
$$
for $x \in \Rb \backslash (\Zb + \half)$. The sum on the right hand side converges absolutely and uniformly for $x \in \Rb$ and defines a continuous function in $x$. 
Therefore the equation still holds when taking the limit $x \to x_0$ for $x_0 \in \Zb + \half$. 
Changing $\tau$ to $-1/\tau$ then gives us \eqref{eq:PSF}.

To understand the removable discontinuity of $G_\tau$, notice that both sides of \eqref{eq:limit} are periodic with period $N \in 2\Nb$.
So we can suppose that $x_0 \in (\Zb + \half ) \cap [-N, 0)$ and obtain
\begin{align*}
  \lim_{x \to x_0^+} G_\tau(x) &= G_\tau(x_0) + \lim_{\epsilon \to 0^+} \sum_{n \le 0} f_\tau(Nn + x_0 + \epsilon)
=
G_\tau(x_0) + \frac{\ebf(x_0/2)}{i \eta^3(\tau)}
\lim_{\epsilon \to 0^+} \frac{\epsilon}{1 - \ebf(\epsilon N \tau)}\\
&= 
G_\tau(x_0) + \frac{\ebf(x_0/2)}{2 \pi N \eta^3(\tau)}.
\end{align*}
The limit from the left can be calculated analogously. 
\end{proof}

Now, we are ready to construct some harmonic Maass forms of weight $\half$. 
For any $N \in \Rb$ such that $N^2 \in 2\Nb$, the lattice 
\begin{equation}
\label{eq:PN}
P_N := N \Zb
\end{equation} 
is even integral (with dual lattice $P^*_N = \frac{1}{N} \Zb$) with respect to the quadratic form $Q(x) := \frac{x^2}{2}$. 
We denote by $\rho_N$ the Weil representation associated to $(P_N, Q)$.
The functions
\begin{equation}
  \label{eq:thetaN}
  \begin{split}
      \theta_N(\tau) &:= \sum_{h \in P^*_N/P_N} \theta_{N, h}(\tau) \ef_h, \;
\theta_{N, h}(\tau) := \sum_{r \in P_N + h} \ebf \lp \frac{r^2}{2} \tau \rp,\\
      \thetab_N(\tau) &:= \sum_{h \in P^*_N/P_N} \thetab_{N, h}(\tau) \ef_h, \;
\thetab_{N, h}(\tau) := \sum_{r \in P_N + h} r \ebf \lp \frac{r^2}{2} \tau \rp 
  \end{split}
\end{equation}
are vector-valued modular forms in $M_{1/2, \rho_N}$ and $S_{3/2, \rho_N}$ respectively.

Suppose from now on that $N \in 2\Nb$. Then $P^*_N \subset \Qb$ contains $\Zb + \half$, which is translation invariant under $P_N$. Therefore, it makes sense to consider the cosets $(\Zb + \half)/P_N$. We define
\begin{equation}
  \label{eq:tthetah}
  \tilde{\thetab}_{N, h}(\tau) := 
  \begin{cases}
    \sum_{r \in P_N + h} g_\tau(r), & \text{ if } h \in (P^*_N \backslash (\Zb + \half)) / P_N, \\
  \frac{ \ebf(h/2) E_2(\tau)}{12 N i \eta^3(\tau)} + \sum_{r \in P_N + h} g_\tau(r), & \text{if } h \in (\Zb + \half) / P_N.
  \end{cases}
\end{equation}
Here, $E_2(\tau) := 1 - 24 \sum_{n \ge 1} \sigma_1(n) q^n$ is the holomorphic Eisenstein series of weight 2 that satisfies the cocycle relation
\begin{equation}
\label{eq:E2cocycle}
E_2(\tau) - \tau^{-2} E_2(-1/\tau) = \frac{6i}{\pi \tau}.
\end{equation}
Since $g_\tau$ is odd, we have $\tilde{\thetab}_{N, -h}(\tau) = -\tilde{\thetab}_{N, h}(\tau)$.
The main result of this section is as follows.
\begin{thm}
  \label{thm:hMf1/2}
Suppose that $N \in 2\Nb$.
Then the function $\tilde{\thetab}_N := \sum_{h \in P^*_N/P_N} \tilde{\thetab}_{N, h} \ef_h$ satisfies $\xi_{1/2} \tilde{\thetab}_N = \frac{1}{\sqrt{2}} \thetab_N$ and is a harmonic Maass form in $H_{1/2, \overline{\rho_N}}$.
Furthermore, the holomorphic part of $\tilde{\thetab}_N$ has rational Fourier coefficient with denominator bounded by $12N$.
\end{thm}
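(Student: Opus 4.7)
The plan is to verify the four defining properties of a harmonic Maass form---the $\xi_{1/2}$-identity, modularity under the generators $T$ and $S$ of $\SL_2(\Zb)$, annihilation by $\Delta_{1/2}$, and admissible growth at cusps---and then extract the denominator bound from Lemma \ref{lemma:denom}.

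The $\xi_{1/2}$-identity follows at once from Proposition \ref{prop:gtau}(1): since $\xi_k = 2iv^k\overline{\partial_{\overline{\tau}}}$, one has $\xi_{1/2} g_\tau(x) = v^{-1/2}\xi_1 g_\tau(x) = \tfrac{x}{\sqrt{2}}\ebf(\tfrac{x^2}{2}\tau)$, while the correction $\ebf(h/2)E_2(\tau)/(12Ni\eta^3(\tau))$ is holomorphic in $\tau$ and is killed by $\xi_{1/2}$. The uniform convergence on compact subsets of $\Rb\setminus(\Zb+\tfrac{1}{2})$ established in the proof of Proposition \ref{prop:converge} legitimizes interchanging $\xi_{1/2}$ with the lattice sum, yielding $\xi_{1/2}\tilde{\thetab}_{N,h} = \tfrac{1}{\sqrt{2}}\thetab_{N,h}$ componentwise. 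For $T$-invariance, Proposition \ref{prop:gtau}(2) gives $g_{\tau+1}(r) = \ebf(-\tfrac{r^2}{2})g_\tau(r)$; the assumption $N\in 2\Nb$ forces $\tfrac{r^2-h^2}{2}\in\Zb$ for $r\in P_N+h$ (using $\tfrac{N^2}{2}\in\Zb$ and $Nh\in\Zb$), so the lattice sum picks up $\ebf(-\tfrac{h^2}{2})$. For $h\in\Zb+\tfrac{1}{2}$, the correction transforms by $\ebf(-1/8)$ via \eqref{eq:eta3_transform}, matching $\ebf(-\tfrac{h^2}{2})$ since $\tfrac{h^2}{2}\in\tfrac{1}{8}+\Zb$.

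The main obstacle is the $S$-invariance. Applying Proposition \ref{prop:converge} to $G_{-1/\tau}$ and regrouping the resulting sum over $\tfrac{1}{N}\Zb$ by cosets of $P_N^*/P_N$ (using $\ebf(Nm\delta)=1$ for $\delta\in P_N^*$) produces
\[
\lim_{x\to\delta} G_{-1/\tau}(x) \;=\; \frac{\tau^{1/2}\ebf(1/8)}{N}\sum_{h\in P_N^*/P_N}\ebf(\delta h)\,G_\tau(h),
\]
in which $\ebf(1/8)$ is precisely the multiplier appearing in $\overline{\rho_N}(S)$ for a positive-definite lattice of rank one. For $\delta\notin\Zb+\tfrac{1}{2}$, this limit equals $G_{-1/\tau}(\delta) = \tilde{\thetab}_{N,\delta}(-1/\tau)$, and the Gauss sum $\sum_{h\in(\Zb+1/2)/N\Zb}\ebf(h(\delta+\tfrac{1}{2}))$ governing the correction contribution vanishes, so the identity closes. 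For $\delta\in\Zb+\tfrac{1}{2}$ one must balance three singular contributions: the jump $\frac{\ebf(\delta/2)}{2\pi N\eta^3(-1/\tau)}$ separating $G_{-1/\tau}(\delta)$ from its limit (Eq.~\eqref{eq:limit} at $-1/\tau$); the transformation of the $E_2/\eta^3$ correction via the cocycle \eqref{eq:E2cocycle} and the $\eta^3$ transformation \eqref{eq:eta3_transform}; and the now-nonzero Gauss-sum contribution $N\ebf(\tfrac{\delta+1/2}{2})$ on the right. The non-modular $-\tfrac{6i\tau}{\pi}$ piece of the $E_2$-cocycle, divided by $\eta^3(-1/\tau)=(-i\tau)^{3/2}\eta^3(\tau)$, yields precisely the term needed to cancel the jump, while the modular piece $\tau^2 E_2(\tau)$ matches the Gauss-sum contribution; the remaining powers of $\tau$ and eighth roots of unity align because $(-i\tau)^{3/2}=\ebf(-3/8)\tau^{3/2}$. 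Carrying out this bookkeeping is the delicate step.

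Harmonicity is automatic: $\Delta_{1/2}=\xi_{3/2}\circ\xi_{1/2}$ and $\xi_{1/2}\tilde{\thetab}_N=\tfrac{1}{\sqrt{2}}\thetab_N$ is holomorphic, so $\Delta_{1/2}\tilde{\thetab}_N=0$. For growth, $g_\tau^*$ decays exponentially, Lemma \ref{lemma:denom} bounds $\ord_\infty g_\tau^+(r)$ so that $\tilde{\thetab}_N^+$ has finite principal part, and $E_2/\eta^3$ has finite principal part as well; since $\xi_{1/2}\tilde{\thetab}_N$ is a cusp form, $\tilde{\thetab}_N\in H_{1/2,\overline{\rho_N}}$. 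Finally, Lemma \ref{lemma:denom} shows $N\cdot\ebf(\tfrac{r^2}{2}\tau)g_\tau^+(r)$ has integral $q$-expansion, so the lattice-sum part of $\tilde{\thetab}_N^+$ has coefficients with denominator at most $N$. The correction contributes denominator at most $12N$, since $E_2$ and $1/\eta^3$ have integer $q$-coefficients and the prefactor $\ebf(h/2)/i\in\{\pm1\}$ is real. The LCM yields the stated bound $12N$.
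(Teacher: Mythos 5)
Your proposal retraces the paper's own proof step for step: termwise application of $\xi_{1/2}$ via Proposition \ref{prop:gtau}(1) justified by the convergence in Proposition \ref{prop:converge}, the $T$-law from Proposition \ref{prop:gtau}(2) together with \eqref{eq:eta3_transform}, the $S$-law from \eqref{eq:PSF} with the vanishing Gauss sum for $h\notin\Zb+\tfrac12$ and the $E_2$-cocycle \eqref{eq:E2cocycle} for $h\in\Zb+\tfrac12$, and the denominator bound from Lemma \ref{lemma:denom}. All of that is fine, and the growth/harmonicity remarks are correct.

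The one place where your argument, as written, would not close is exactly the step you defer as ``delicate bookkeeping''. You quote the jump from \eqref{eq:limit} as $\frac{\ebf(\delta/2)}{2\pi N\eta^3(-1/\tau)}$, but \eqref{eq:limit} as printed is missing a factor of $\tau^{-1}$: redoing the geometric sum in the proof of Proposition \ref{prop:converge} gives $\lim_{\epsilon\to 0^+}\frac{\epsilon}{1-\ebf(\epsilon N\tau)}=\frac{i}{2\pi N\tau}$, so the correct statement is $\lim_{x\to x_0}G_\tau(x)=G_\tau(x_0)+\frac{\ebf(x_0/2)}{2\pi N\tau\,\eta^3(\tau)}$. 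Evaluated at $-1/\tau$ the jump is therefore $-\frac{\tau\,\ebf(\delta/2)}{2\pi N\eta^3(-1/\tau)}$, which is precisely the negative of the term $+\frac{\tau\,\ebf(\delta/2)}{2\pi N\eta^3(-1/\tau)}$ produced by the non-modular piece of $E_2(-1/\tau)=\tau^2E_2(\tau)-\frac{6i\tau}{\pi}$ inside the correction $-\frac{\ebf(\delta/2)E_2(-1/\tau)}{12Ni\,\eta^3(-1/\tau)}$. With the jump as you state it (no factor $\tau$), the two singular contributions add up to $\frac{(1+\tau)\ebf(\delta/2)}{2\pi N\eta^3(-1/\tau)}\neq 0$, so the ``precise cancellation'' you assert fails, and carrying out the bookkeeping literally from \eqref{eq:limit} would contradict modularity rather than prove it. Once the jump is corrected, the computation closes exactly as you sketch (and as in the paper): the modular piece leaves $-\frac{\ebf(\delta/2)E_2(\tau)\tau^2}{12Ni(-i\tau)^{3/2}\eta^3(\tau)}$, and after multiplying by $\tau^{-1/2}$ and using $\frac{\tau^{-1/2}\tau^2}{i(-i\tau)^{3/2}}=\ebf(1/8)$ it matches the Gauss-sum term $-\frac{\ebf(1/8)}{N}\cdot\frac{\ebf(\delta/2)E_2(\tau)}{12\eta^3(\tau)}$ on the other side. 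So the gap is small and inherited from a misprint in \eqref{eq:limit}, but since you asserted rather than performed the cancellation, it is the step you must repair.
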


\begin{proof}
Since the sum defining $\tilde{\thetab}_N$ converges absolutely by Prop.\ \ref{prop:converge}, we can apply $\xi_{1/2}$ to $\tilde{\thetab}_N$ termwisely and conclude that $\xi_{1/2} \tilde{\thetab}_N = \frac{\sqrt{2}}{2} \thetab_N$ from Prop.\ \ref{prop:gtau}.
The bound on the denominator then follows from Lemma \ref{lemma:denom}, which also implies that $\tilde{\thetab}_N$ is regular on $\Hc$ and has at most linear exponential growth near the cusps.
To show that $\tilde{\thetab}_{N} \in H_{3/2, \overline{\rho_N}}$, we just need to check
\begin{equation}
\label{eq:transform_claim}
\tilde{\thetab}_{N, h}(\tau + 1) = \ebf ( - h^2/2) \tilde{\thetab}_{N, h}(\tau), \;
\tau^{-1/2} \tilde{\thetab}_{N, h}(-1/\tau) = 
\frac{\ebf(1/8)}{N} \sum_{t \in P^*_N/P_N} \ebf(ht) \tilde{\thetab}_{N, t}(\tau)
\end{equation}
for all $h \in P^*_N/P_N$.
Since $g_{\tau + 1}(r) = \ebf(-r^2/2) g_\tau(r)$ by Prop.\ \ref{prop:gtau} and $\ebf(-h^2/2) = \ebf(-1/8) = \eta^3(\tau) / \eta^3(\tau + 1)$ if $h \in \Zb + \half$, the first equation holds.

For the second equation, we start with identity \eqref{eq:PSF}.
Suppose $h \in (P_N^* \backslash ( \Zb + \half))/N \Zb$. Then substituting $x_0 = h$ into identity \eqref{eq:PSF} gives us
\begin{align*}
  \tau^{-1/2} \tilde{\thetab}_{N, h}(-1/\tau) &= \frac{\ebf(1/8)}{N}
\lp \sum_{t \in P^*_N/P_N} \ebf(ht) \tilde{\thetab}_{N, t}(\tau) 
- \frac{E_2(\tau)}{12 N i \eta^3(\tau)} \sum_{t \in (\Zb + \half)/P_N} \ebf((h + \half)t)
\rp.
\end{align*}
By choosing $t = n + \half$ with $0 \le n \le N - 1$ as the representatives of the cosets in $(\Zb + \half)/P_N$, we see that the second sum above vanishes and the equation above becomes the second claim in \eqref{eq:transform_claim}.
Suppose $h \in (\Zb + \half)/N\Zb$ and we also denote $h$ to be the representative in $[0, N)$. Then \eqref{eq:PSF} implies that 
\begin{equation}
\label{eq:id1}
\tau^{-1/2} \lim_{x_0 \to h} G_{-1/\tau}(x_0)
= 
\frac{\ebf(1/8)}{N}
\lp \sum_{t \in P^*_N/P_N} \ebf(ht) \tilde{\thetab}_{N, t}(\tau) 
- \frac{\ebf(h/2) E_2(\tau)}{12 \eta^3(\tau)} 
\rp.
\end{equation}
Using \eqref{eq:limit}, we can evaluate the left hand side above as
\begin{align*}
   \lim_{x_0 \to h}  G_{-1/\tau}(x_0)
&= \tilde{\thetab}_{N, h}(-1/\tau) - \frac{\ebf(h/2) E_2(-1/\tau)}{12 N i \eta^3(-1/\tau)}
+ 
\frac{ \ebf(h/2)}{i \eta^3(-1/\tau)} \frac{\tau}{2\pi i N } \\
&=
\tilde{\thetab}_{N, h}(-1/\tau) - \frac{\ebf(h/2) (E_2(\tau) - \frac{6i}{\pi \tau}) \tau^2 }{12 N i \eta^3(-1/\tau)}
+ 
\frac{ \ebf(h/2)}{i \eta^3(-1/\tau)} \frac{\tau}{2\pi i N } \\
&= 
\tilde{\thetab}_{N, h}(-1/\tau) - \frac{\ebf(h/2) E_2(\tau) \tau^2 }{12 N i (- i \tau)^{3/2} \eta^3(\tau)}.
\end{align*}
Substituting this into \eqref{eq:id1} and using the fact that $\frac{\tau^{-1/2} \tau^2}{i (-i\tau)^{3/2}} = \ebf(1/8)$ then finishes the proof.
\end{proof}

\begin{exmp}
  In the simplest case $N = 2$, $P^*_2/P_2 = \half \Zb / 2\Zb$ and 
$\thetab_2(\tau) = \half \eta^3(\tau) ( \ef_{1/2} - \ef_{3/2})$.
Then the holomorphic part of $\tilde{\thetab}_{2, h}(\tau)$, which we denote by $\tilde{\thetab}^+_{2, h}(\tau)$, vanishes for $h = 0, 1$ and
\begin{align*}
  \tilde{\thetab}^+_{2, 1/2}(\tau)
 &= \frac{E_2(\tau)}{24 \eta^3(\tau)} + \sum_{r \in 2\Zb + 1/2} g^+_\tau(r)\\
&= \frac{E_2(\tau)}{24 \eta^3(\tau)} +
 \sum_{\begin{subarray}{c} r \in 2\Zb + 1/2 \\ m \in \Zb + 1/2, m > |r| \end{subarray}} \frac{\sgn(r) (m - |r|)}{i \eta^3(\tau)} \ebf \lp \frac{m^2 - r^2}{2} \tau + \frac{m}{2} \rp
\\
 &= \frac{E_2(\tau)/24 - F_2^{(2)}(\tau)}{\eta^3(\tau)} =
-\frac{q^{-1/8}}{24}(-1 + 45 q + 231 q^2 + 770 q^3 + O(q^4)) =
  -\tilde{\thetab}^+_{2, 3/2}(\tau),
\end{align*}
where $F_2^{(2)}(\tau) := \sum_{b > a > 0, b-a \text{ odd}} a (-1)^b q^{ab/2}$ comes out of the substitution $a = m - |r|, b = m + |r|$. 
The function $\tilde{\thetab}^+_{2, 1/2}$ plays an important role in Matthieu Moonshine \cite{DMZ12, EOT11}. 
\end{exmp}

\section{Construction of $\tTheta(\tau, L)$.}
\label{sec:tTheta}
In this section, we will construct a modular preimage of $\Theta(\tau, L; t)$ under $\xi_1$ at $t = 1$, and we denote it by $\tTheta(\tau, L)$.
Recall from \eqref{eq:Theta} that $\Theta(\tau, L; t)$ is constructed from $\phi_\tau$ defined in \eqref{eq:phi}.
If we let
\begin{equation}
  \label{eq:tphi}
  \begin{split}
\tTheta^*(\tau, L ; t) &:=  \sum_{h \in L^*/L} \tTheta^*_h(\tau, L ; t) \ef_h, \;
  \tTheta^*_h(\tau, L; t) := \sum_{X \in L+h} \tphi^*_\tau(B(X, Z^+_t), B(X, Z^-_t)), \\
  \tphi^*_\tau(x, y) &:= 2 \ebf \lp \frac{y^2}{2} \tau \rp \gn_\tau(x)
=  \ebf \lp \frac{y^2 - x^2}{2} \tau \rp \sgn(x) \erfc(\sqrt{2\pi v}|x|),
  \end{split}
\end{equation}
then Prop. \ref{prop:gtau} implies that $\xi_1 \tTheta^*(\tau, L; t) = - \Theta(\tau, L; t)$.
However, $\tTheta^*(\tau, L; t)$ is \textit{not} modular since $\tphi^*_\tau$ does not behave well under the Fourier transform $\Fc_0$. In view of Prop.\ \ref{prop:gtau}, it is more natural to replace $\gn_\tau$ in \eqref{eq:tphi} with $g_\tau$ to form a modular form, but it might not converge for all $L$ and $t$ (see Remark \ref{rmk:fail}). 

The situation becomes much better at the special point $t = 1$, where the theta kernel $\Theta(\tau, L; t)$ becomes a finite sum of products of weight $\frac{3}{2}$ holomorphic unary theta series with anti-holomorphic theta functions. 
Even though $\tTheta(\tau, L)$ is not harmonic, it still breaks naturally into the sum of a holomorphic part and a non-holomorphic part $\tTheta^*(\tau, L) := \tTheta^*(\tau, L; 1)$.
The holomorphic part is a vector-valued Laurent series in $q^{1/d_L}$, which will have rational Fourier coefficients with explicitly bounded denominators.
This holomorphic function is also called a ``mixed-mock modular form'' in the literature \cite{DMZ12}.
Our method should work for any $t \in F \cap \Rb^\times_+$, and it would be very interesting to construct the preimage for all $t$.
We split the construction into a special case and the general case, and deduce the latter from the former.

\subsection{Special case.}
\label{subsec:special}
Suppose that $L = L_{\af, 2AN^2 }$ for some integral ideal $\af \subset \Oc_D$ and positive integer $N$.
Let $P := 2A^2N^2 (\Zb \oplus \sqrt{D}\Zb) \subset L$ be a sublattice.
Via the map $a + b\sqrt{D} \mapsto (\tfrac{a}{AN}, \tfrac{\sqrt{D} b}{AN})$, the lattice $P$ is isometric to $P_{2AN} \oplus (-P_{2AN\sqrt{D}})$ and $P^*/P \cong P^*_{2AN}/P_{2AN} \times P^*_{2AN\sqrt{D}}/P_{2AN\sqrt{D}}$ as finite abelian groups (see \eqref{eq:PN}).
Therefore, 
\begin{align*}
\Theta_{} (\tau, P; 1) &= 
\frac{2\sqrt{v}}{\sqrt{2}} 
\sum_{h \in P^*/P}
\sum_{    a + b\sqrt{D} \in P + h }
\frac{a }{AN}
\ebf\lp
\frac{(a/(AN))^2 \tau}{2}\rp
\ebf\lp
\frac{ - (\sqrt{D} b/(AN))^2 \overline{\tau}}{2}
\rp\\
&= \sqrt{2v} \thetab_{2AN}(\tau) \otimes \theta_{2AN\sqrt{D}}(-\overline{\tau})
\end{align*}
by \eqref{eq:theta_explicit}, where $\thetab$ and $\theta$ are defined in \eqref{eq:thetaN}.
Recall that there is a $\Gamma$-linear map $\psi : \Cb[P^*/P] \to \Cb[L^*/L]$ defined in \eqref{eq:psi} and a harmonic Maass form $\tilde{\thetab}_{2AN}$ constructed in section \ref{subsec:hMf1/2}.
We can now define
\begin{equation}
  \label{eq:tTheta1}
  \tTheta(\tau, L) := 2 \psi(\tilde{\thetab}_{2AN}(\tau) \otimes \theta_{2AN\sqrt{D}}(\tau))
\end{equation}
and prove the following result
\begin{prop}
  \label{prop:tTheta}
The function $\tTheta(\tau, L)$ is a real-analytic modular form in $\Ac_{1, \rho_{-L}}$ such that $\xi_1 (\tTheta(\tau, L) ) = \Theta(\tau, L; 1)$.
Furthermore, its holomorphic part
\begin{equation}
  \label{eq:denom_Theta}
 \tTheta^+(\tau, L) := 
2 \psi(\tilde{\thetab}^+_{2AN}(\tau) \otimes \theta_{2AN\sqrt{D}}(\tau))
\end{equation}
is a formal Laurent series in $\frac{1}{6AN}\Zb[L^*/L](\!( q^{1/d_L} )\!)$
\end{prop}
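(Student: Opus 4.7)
The plan is to build $\tTheta(\tau, L)$ from Theorem \ref{thm:hMf1/2}. On the sublattice $P$, the theta kernel $\Theta(\tau, P; 1)$ factors as a product of a weight $3/2$ holomorphic unary theta series and an anti-holomorphic weight $1/2$ theta series. Theorem \ref{thm:hMf1/2} supplies a $\xi_{1/2}$-preimage of the former; tensoring with the holomorphic companion $\theta_{2AN\sqrt{D}}$ and pushing forward via the $\Gamma$-linear averaging map $\psi$ from \eqref{eq:psi} yields exactly the candidate in \eqref{eq:tTheta1}.

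The $\xi_1$-identity is the heart of the argument. Since $\theta_{2AN\sqrt{D}}$ is holomorphic, the Leibniz rule for $\xi_k$ on a product collapses to
\[
\xi_1\bigl(\tilde{\thetab}_{2AN}(\tau)\otimes\theta_{2AN\sqrt{D}}(\tau)\bigr) \;=\; v^{1/2}\,\xi_{1/2}\bigl(\tilde{\thetab}_{2AN}\bigr)(\tau)\otimes\overline{\theta_{2AN\sqrt{D}}(\tau)}.
\]
Inserting $\xi_{1/2}\tilde{\thetab}_{2AN} = \tfrac{1}{\sqrt{2}}\thetab_{2AN}$ from Theorem \ref{thm:hMf1/2} together with the identity $\overline{\theta_{2AN\sqrt{D}}(\tau)} = \theta_{2AN\sqrt{D}}(-\overline{\tau})$ (valid since the theta coefficients are real), one recognizes the right-hand side as $\tfrac{1}{2}\Theta(\tau, P; 1)$ via the explicit identification recalled in Section \ref{subsec:special}. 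The prefactor $2$ in \eqref{eq:tTheta1} cancels the $\tfrac{1}{2}$, and the equivariance \eqref{eq:equivariance} then delivers $\xi_1\tTheta(\tau,L) = \psi\bigl(\Theta(\tau,P;1)\bigr) = \Theta(\tau,L;1)$.

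Modularity is then a matter of tracking representations. The isometry $P\cong P_{2AN}\oplus(-P_{2AN\sqrt{D}})$ decomposes the Weil representation as $\rho_P \cong \rho_{2AN}\otimes\overline{\rho_{2AN\sqrt{D}}}$, the complex conjugation accounting for the sign flip in the quadratic form on the second factor. Since $\tilde{\thetab}_{2AN}$ transforms under $\overline{\rho_{2AN}}$ and $\theta_{2AN\sqrt{D}}$ under $\rho_{2AN\sqrt{D}}$, their tensor product has weight $1$ and representation $\rho_{-P}$. The map $\psi$ permutes basis vectors with $\{0,1\}$-valued coefficients, hence commutes with complex conjugation and remains $\Gamma$-linear for the pair $(\rho_{-P},\rho_{-L})$; this places $\tTheta(\tau,L)$ in $\Ac_{1,\rho_{-L}}$.

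For the denominator bound on $\tTheta^+(\tau,L) = 2\,\psi(\tilde{\thetab}^+_{2AN}\otimes\theta_{2AN\sqrt{D}})$, Theorem \ref{thm:hMf1/2} bounds the denominator of $\tilde{\thetab}^+_{2AN}$ by $12\cdot(2AN)=24AN$, while $\theta_{2AN\sqrt{D}}$ has integer Fourier coefficients and $\psi$ preserves integrality. The prefactor $2$ immediately halves the bound to $12AN$, and the further factor of $2$ needed to reach $\tfrac{1}{6AN}$ comes from exploiting that the only Fourier coefficient of $E_2$ \emph{not} divisible by $24$ is the constant term, which when multiplied by $2$ in the $\tfrac{E_2(\tau)}{12\cdot 2AN\cdot\eta^3(\tau)}$ contribution to \eqref{eq:tthetah} affords an additional cancellation. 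This arithmetic bookkeeping — not the conceptual crux but the fussiest step — is where I expect the most caution, the structural core of the proof being the $\xi_1$-identity and the representation-theoretic matching above.
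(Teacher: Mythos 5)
Your treatment of the two structural claims matches the paper's own (very compressed) argument, just with the details written out: the $\xi_1$-identity via the Leibniz rule (the holomorphic factor is killed by $\overline{\partial_{\overline{\tau}}}$) combined with $\xi_{1/2}\tilde{\thetab}_{2AN}=\tfrac{1}{\sqrt{2}}\thetab_{2AN}$ from Theorem \ref{thm:hMf1/2}, the recognition of $\sqrt{2v}\,\thetab_{2AN}(\tau)\otimes\theta_{2AN\sqrt{D}}(-\overline{\tau})$ as $\Theta(\tau,P;1)$, the equivariance \eqref{eq:equivariance}, and the fact that $\psi$, having $\{0,1\}$-entries, commutes with complex conjugation and hence with $\xi_1$ and is $\Gamma$-linear for $(\rho_{-P},\rho_{-L})$. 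This part is correct and is essentially the paper's proof.

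The gap is in the denominator bound, exactly where you flagged uncertainty. Your accounting is sound up to $\frac{1}{12AN}$: Theorem \ref{thm:hMf1/2} bounds the denominator of $\tilde{\thetab}^+_{2AN}$ by $12\cdot(2AN)=24AN$, the theta factor and $\psi$ preserve integrality, and the prefactor $2$ halves this. But the mechanism you invoke for the last factor of $2$ does not work as stated. The only source of the denominator $24AN$ in \eqref{eq:tthetah} is the \emph{constant} term of $E_2$ in $\frac{\ebf(h/2)E_2(\tau)}{12\cdot 2AN\,i\,\eta^3(\tau)}$: the higher coefficients of $E_2$ are divisible by $24$ and therefore, after multiplying by $2$, already land in $\frac{1}{AN}\Zb$, as do the $g^+_\tau$-sums by Lemma \ref{lemma:denom}. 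So the divisibility-by-$24$ observation only controls the part that was never the problem; the constant term of $E_2$ still produces, after the prefactor $2$, coefficients of the form $\frac{1}{12AN}$ times coefficients of $\eta^{-3}\cdot\theta_{2AN\sqrt{D},\delta_2}$, which are integers that are in general odd (the leading coefficient of $\eta^{-3}$ is $1$ and unary theta coefficients are frequently $1$), so no extra factor of $2$ appears componentwise. To pass from $12AN$ to the stated $6AN$ one would need an additional input — for instance a pairing among the cosets $\delta$ in the fiber of $\psi$ over a fixed $h\in L^*/L$ that doubles precisely these $E_2$-constant contributions, or a sharpening of Theorem \ref{thm:hMf1/2} itself — and your proposal supplies neither. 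In fairness, the paper's own proof is equally terse at this point (it only asserts that the bound "follows from" Theorem \ref{thm:hMf1/2}, and the same direct count gives $\frac{1}{12AN}$), and the precise constant is not critical downstream; but as written, your justification of the bound $\frac{1}{6AN}$ in \eqref{eq:denom_Theta} is an assertion, not a proof.
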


\begin{proof}
Since $\psi$ is defined over $\Zb$, it commutes with $\xi$ and is $\Gamma$-linear with respect to $\rho_{-P}$ and $\rho_{-L}$ as well. Therefore, $\tTheta(\tau, L) \in \Ac_{1, \rho_{-L}}$ and 
$$
\xi_1 \tTheta(\tau, L) = 2\sqrt{v} \psi \lp \xi_{1/2} ( \tilde{\thetab}_{2AN}(\tau)  ) \otimes \theta_{2AN\sqrt{D}}(-\overline{\tau})
\rp
= \psi( \Theta(\tau, P; 1)) = \Theta(\tau, L; 1)
$$
by \eqref{eq:equivariance} and Theorem \ref{thm:hMf1/2}, which also implies the bound on the denominator.
\end{proof}

\subsection{General case.}
\label{subsec:general}
For any even, integral lattice $(L, Q)$ and $N \in \Nb$, denote the scaled lattice $(NL, \frac{Q}{N})$ by $NL$. Note that they have the same dual lattice and there is a natural projection 
$$
L^*/(NL) \to L^*/L.
$$
The following simple lemma then relates $\Theta(\tau, L; t)$ and $\Theta(\tau, NL; t)$.

\begin{lemma}
  \label{lemma:average}
Fix $\tau \in \Hc$ and $t \in \Rb^\times_+$. For any $h \in L^*/L$, we have
$$
\Theta_h(\tau, L; t) = \sum_{\delta \in L^*/NL, \; \delta \equiv h \bmod L}
\Theta_\delta(N \tau, NL; t). 
$$
Equivalently, let $\Cc_{L, N}$ be the $|L^*/L| \times |L^*/NL|$ matrix defined by 
\begin{equation}
\label{eq:Cc}
\Cc_{L, N} := \lp \mathds{1}_{L}(h - \delta) \rp_{h \in L^*/L, \delta \in L^*/NL},
\end{equation}
where $\mathds{1}_L$ is the characteristic function of $L$.
Then 
\begin{equation}
  \label{eq:scale_id}
\Theta(\tau, L; t) =  \Cc_{L, N} \cdot \Theta(N\tau, NL; t).
\end{equation}
\end{lemma}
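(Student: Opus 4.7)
The plan is to unfold the right-hand side coset by coset and match summands pointwise with those of $\Theta_h(\tau, L; t)$.

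First, I would decompose the support. Because $NL\subset L\subset L^*$, for any fixed $h\in L^*/L$ the set of $\delta\in L^*/NL$ with $\delta\equiv h\bmod L$ is a torsor for $L/NL$, and the cosets $NL+\delta$ over such $\delta$ partition $L+h$. Hence
\[
L+h=\bigsqcup_{\delta\equiv h\bmod L}(NL+\delta),
\]
which reduces the claim to a pointwise identity between the two Schwartz summands, independent of $\delta$.

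Second, I would verify that the summand of $\Theta_\delta(N\tau, NL; t)$ at $X\in NL+\delta$ coincides with $\phi_\tau(B(X, Z_t^+), B(X, Z_t^-))$. For the rescaled quadratic form $Q/N$ the normalization $Q(Z_t^\pm)=\pm\tfrac12$ becomes $(Q/N)(\sqrt{N}\,Z_t^\pm)=\pm\tfrac12$, so one uses $\sqrt{N}\,Z_t^\pm$ as the orthogonal basis and the scaled bilinear form $B/N$ to read off coordinates. The coordinates of $X$ then become $B(X, Z_t^\pm)/\sqrt{N}$. Substituting into $\phi_{N\tau}$, using $\im(N\tau)=Nv$, the $\sqrt{N}$ from the new norm factor combines with $1/\sqrt{N}$ from the first coordinate, and the factor $1/N$ in each squared coordinate pairs with $N\tau$ and $N\overline{\tau}$ in the exponent, so every power of $N$ cancels and one recovers exactly $\phi_\tau(B(X, Z_t^+), B(X, Z_t^-))$. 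Summing this identity over the relevant $\delta$ yields the first displayed equation.

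Finally, the matrix formulation \eqref{eq:scale_id} is just a restatement: by construction the $h$-th component of $\Cc_{L, N}\cdot \Theta(N\tau, NL; t)$ is $\sum_{\delta\in L^*/NL} \mathds{1}_L(h-\delta)\,\Theta_\delta(N\tau, NL; t)$, and the indicator picks out precisely those $\delta$ congruent to $h$ modulo $L$.

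The only mildly delicate point is the rescaling of $Z_t^\pm$ by $\sqrt{N}$ to keep the Schwartz-function normalization consistent when $Q$ is replaced by $Q/N$; once this is done, the proof is pure bookkeeping.
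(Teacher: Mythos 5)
Your proposal is correct. The paper offers no proof of this lemma at all (it is only attributed to Hecke in the remark that follows), so there is nothing to deviate from; your argument -- partitioning $L+h$ into the cosets $NL+\delta$ with $\delta\equiv h\bmod L$, and checking that passing from $(Q,\tau)$ to $(Q/N,N\tau)$ leaves each Schwartz summand unchanged once the basis $Z_t^{\pm}$ is renormalized to $\sqrt{N}\,Z_t^{\pm}$ (the factors $\sqrt{N}$ from $\im(N\tau)=Nv$ and $1/\sqrt{N}$ from the coordinates cancel) -- is exactly the intended verification. It is also equivalent to simply comparing the explicit expansions \eqref{eq:theta_explicit} for $L_{\af,M}$ at $\tau$ and $L_{\af,MN}=NL_{\af,M}$ at $N\tau$, where the prefactor $\sqrt{Nv}/\sqrt{AMN}=\sqrt{v}/\sqrt{AM}$ and the exponent $N\tau/(4AMN)=\tau/(4AM)$ make the cancellation visible at a glance; the matrix form \eqref{eq:scale_id} is, as you say, a restatement.
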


\begin{rmk}
  This already appeared in Hecke's work \cite[Eq.\ III in \textsection 4]{Hecke26}.
\end{rmk}

Let $L = L_{\af, M}$ with $\af \subset \Oc_D$ an integral ideal and $M \in \Nb$ any natural number. 
Define $N \in \Nb$ to be the smallest positive integer such that 
\begin{equation}
  \label{eq:N}
NM = 2A(N')^2
\end{equation}
for some $N' \in \Nb$.
In particular, we can always choose $N = 2AM$ and $N' = M$. 
Using the function $\tTheta(\tau, NL)$ in \eqref{eq:tTheta1}, we can construct $\tTheta(\tau, L)$ with the proposition below.
\begin{prop}
  \label{prop:GammaN}
The function $\Cc_{L, N} \cdot \tTheta(N\tau, NL)$ maps to $\Theta(\tau, L; 1)$ under $\xi_1$ and is in $\Ac_{1, \rho_{-L}}(\Gamma_0(N))$.
\end{prop}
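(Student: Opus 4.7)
The plan is to verify the two claims---the $\xi_1$-equation and the $\Gamma_0(N)$-modularity---separately, in each case bootstrapping from Proposition \ref{prop:tTheta} (the special-case result for $NL$) and Lemma \ref{lemma:average}.

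For the $\xi_1$-equation: a direct calculation from \eqref{eq:Deltak} shows that $\xi_1$ commutes with the pull-back $\tau \mapsto N\tau$ in weight one, i.e.\ $\xi_1(h(N\tau)) = (\xi_1 h)(N\tau)$ for any real-analytic $h$ on $\Hc$. Combined with Proposition \ref{prop:tTheta} and Lemma \ref{lemma:average}, this yields
\[
\xi_1 \bigl(\Cc_{L,N} \cdot \tTheta(N\tau, NL)\bigr) \;=\; \Cc_{L,N} \cdot \Theta(N\tau, NL; 1) \;=\; \Theta(\tau, L; 1).
\]

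For modularity, fix $\gamma = \smat{a}{b}{c}{d} \in \Gamma_0(N)$ and set $\tilde{\gamma} := \smat{a}{Nb}{c/N}{d} \in \SL_2(\Zb)$, which satisfies $\tilde{\gamma}(N\tau) = N\gamma\tau$ and $(c/N)(N\tau) + d = c\tau + d$. Applying the $\SL_2(\Zb)$-modularity of $\tTheta(\cdot, NL)$ from Proposition \ref{prop:tTheta} at the point $N\gamma\tau$, one obtains
\[
\tTheta(N\gamma\tau, NL) \;=\; (c\tau + d)\, \rho_{-NL}(\tilde{\gamma})\, \tTheta(N\tau, NL).
\]
Multiplying on the left by $\Cc_{L,N}$, the desired transformation $F \mid_{1, \rho_{-L}} \gamma = F$ for $F(\tau) := \Cc_{L,N}\, \tTheta(N\tau, NL)$ reduces to the purely representation-theoretic intertwining identity
\[
\Cc_{L,N}\, \rho_{-NL}(\tilde{\gamma}) \;=\; \rho_{-L}(\gamma)\, \Cc_{L,N} \quad \text{on } \Cb[L^*/NL]
\]
for every $\gamma \in \Gamma_0(N)$. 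This identity is the main obstacle in the proof.

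My plan for establishing the intertwining is to bootstrap it off the already-modular holomorphic theta kernel $\Theta(\tau, -L; 1) \in \Ac_{1, \rho_{-L}}(\SL_2(\Zb))$. Lemma \ref{lemma:average} applied to $-L$ (noting $\Cc_{-L,N} = \Cc_{L,N}$, since the matrix depends only on the underlying lattice inclusion and not on the quadratic form) gives $\Theta(\tau, -L; 1) = \Cc_{L,N}\, \Theta(N\tau, -NL; 1)$, and comparing the $\SL_2(\Zb)$-modularity of the left side with the $\Gamma_0(N)$-transformation of the right side exactly shows that the operator $\Cc_{L,N}\rho_{-NL}(\tilde{\gamma}) - \rho_{-L}(\gamma)\Cc_{L,N}$ annihilates the vector $\Theta(N\tau, -NL; 1) \in \Cb[L^*/NL]$ for every $\gamma \in \Gamma_0(N)$. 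Since $-L$ is anisotropic and of rank two, the $|L^*/NL|$ components $\Theta_\delta(N\tau, -NL; 1)$ are linearly independent as real-analytic functions on $\Hc$---each non-trivial coset $\delta \in L^*/NL$ is detected by the distinctive $(q, \bar q)$-bidegree of its minimum-norm contribution---so the vector equation upgrades to the required operator identity, completing the proof.
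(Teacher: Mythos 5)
Your reduction of the modularity claim to the intertwining identity $\Cc_{L,N}\,\rho_{-NL}(\tilde\gamma)=\rho_{-L}(\gamma)\,\Cc_{L,N}$ is exactly the paper's reduction (your $\tilde\gamma$ is its $\gamma_N$), and the $\xi_1$-computation is fine. The gap is in the final step: the components $\Theta_\delta(N\tau,-NL;1)$, $\delta\in L^*/NL$, are \emph{not} linearly independent. Because the Schwartz function \eqref{eq:phi} is odd in its first argument, $\Theta_{-\delta}=-\Theta_\delta$; in particular every component with $2\delta\in NL$ (e.g.\ $\delta=0$) vanishes identically, and the remaining ones come in proportional $\pm$-pairs. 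At the special point $t=1$ there are further dependencies: the substitution $\lambda\mapsto\lambda'$ fixes $B(X,Z^-_1)$ up to sign and $B(X,Z^+_1)$, so Galois-conjugate cosets give \emph{equal} components, and within a fixed $(q,\bar q)$-bidegree the contributions are signed and can cancel (e.g.\ when $\lambda$ and $-\lambda'$ lie in the same coset), so the ``minimum-norm contribution'' need not detect the coset. Consequently the vector equation $\bigl(\Cc_{L,N}\rho_{-NL}(\tilde\gamma)-\rho_{-L}(\gamma)\Cc_{L,N}\bigr)\Theta(N\tau,-NL;1)=0$ does not upgrade to the operator identity on all of $\Cb[L^*/NL]$; at best it constrains the operator on the span of the vectors $\ef_\delta-\ef_{-\delta}$ (and at $t=1$ only on certain Galois-symmetrized combinations of these), and you have not proved even that, since the independence of the distinct nonzero components is asserted rather than established.

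The paper's proof avoids exactly this obstruction by bootstrapping off a different kernel: the \emph{weight-zero} Siegel theta function $\theta(\tau,NL;t)$, whose components are Gaussians with positive coefficients (no cancellation, no identically vanishing components), and crucially with the parameter $t\in\Rb^\times_+$ kept free; the identity $M_\gamma\cdot\theta(N\tau,NL;t)=0$ for all $t$ is then converted into statements about Fourier coefficients as functions of $v$, and the asymptotics in $v$ (for varying $t$) force $M_\gamma=0$, after which complex conjugation gives \eqref{eq:weil_id}. If you want to salvage your route, note that the full intertwining identity is more than you need: since $\tTheta_{-h}(\tau,NL)=-\tTheta_h(\tau,NL)$ (as $g_\tau$ is odd), it suffices to prove the identity on the antisymmetric vectors $\ef_\delta-\ef_{-\delta}$, which is the kind of statement a (corrected) bootstrap off a weight-one kernel could in principle deliver -- this is precisely the weaker kernel statement the paper itself extracts later in the proof of Proposition \ref{prop:mod_N_gen} -- but you would still need to keep $t$ free (or argue much more carefully at $t=1$) and justify the linear independence of the surviving components rather than assert it.
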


\begin{proof}
The first claim follows from equation \eqref{eq:scale_id} and $\xi_1 \tTheta(N \tau, NL) = \Theta(N \tau, NL)$.
For the second claim, notice that for $\gamma \in \Gamma_0(N)$, we have
\begin{equation}
\label{eq:slash_id}
\lp \Cc_{L, N} \cdot  \tTheta(N\tau, NL) \rp \mid_{1, \rho_{-L}} \gamma = 
\rho^{-1}_{-L}(\gamma) \cdot \Cc_{L, N} \cdot \rho_{-NL}(\gamma_N) \cdot \tTheta(N \tau, NL),
\end{equation}
where $\gamma_N:= \smat{N}{}{}{1} \cdot \gamma \cdot \smat{1/N}{}{}{1} \in \Gamma$. 
Thus, it suffices to show the matrix identity
\begin{equation}
    \label{eq:weil_id}
\rho_{-L}(\gamma) \cdot \Cc_{L, N} = \Cc_{L, N} \cdot \rho_{-NL}(\gamma_N)
\end{equation}
for all $\gamma \in \Gamma_0(N)$. 
Denote
$$
M_\gamma := \rho_{L}(\gamma) \cdot \Cc_{L, N} - \Cc_{L, N} \cdot \rho_{NL}(\gamma_N).
$$
It suffices to prove that $\ef_h$ is in the right kernel of $M_\gamma$ for all $h \in L^*/NL$. 
For $t \in \Rb^\times_+$, consider the theta series
$$
\theta(\tau, NL; t) := \sqrt{v} \sum_{h \in L^*/NL} \ef_h \sum_{\lambda \in NL + h} 
\ebf \lp 
\frac{(\lambda t^{-1} + \lambda't)^2}{2AMN} \tau - \frac{(-\lambda t^{-1} + \lambda't)^2}{2AMN} \overline{\tau}
\rp \in \Ac_{0, \rho_{NL}}(\Gamma). 
$$
Then $\Cc_{L, N} \cdot \theta(\tau, NL; t) = \theta(\tau, L; t)$ and for all $\gamma \in \Gamma_0(N), t \in \Rb^\times_+$ 
$$
M_\gamma \cdot \theta(N\tau, NL; t) = 0
$$
as a $\Cb[L^*/L]$-valued function on $\Hc$. 
This follows from the same calculations that produced equation \eqref{eq:slash_id}.
This power series identity necessarily becomes an identity between the Fourier coefficients, which are functions of $v$. 
From the asymptotic behavior with respect to $v$, we can deduce that $\ef_h$ is in the kernel of $M_\gamma$ for all $h \in L^*/NL$, i.e.\ $M_\gamma$ vanishes identically. Applying complex conjugation then gives us equation \eqref{eq:weil_id}.
\end{proof}

Now, we can average $\Cc_{L, N} \cdot \tTheta(N\tau, NL)$ over $\Gamma_0(N) \backslash \Gamma$ to define
\begin{equation}
  \label{eq:tTheta_gen}
  \tTheta(\tau, L) := \frac{1}{[\Gamma: \Gamma_0(N)]} \sum_{\gamma \in \Gamma_0(N) \backslash \Gamma} \lp \Cc_{L, N} \cdot \tTheta(N \tau, NL) \rp \mid_{1, \rho_{-L}} \gamma.
\end{equation}
The main result of this section is as follows.
\begin{thm}
  \label{thm:tTheta_gen}
Let $L = L_{\af, M}$. The function $ \tTheta(\tau, L) \in \Ac_{1, \rho_{-L}}(\Gamma)$ is a real-analytic automorphic form such that $\xi_1(\tTheta(\tau, L)) = \Theta(\tau, L; 1)$ and $\tTheta^+(\tau, L) := \tTheta(\tau, L) + \tTheta^*(\tau, L)$ is in $\frac{1}{\kappa_L} \Zb[L^*/L] (\!( q)\!)$ with 
\begin{equation}
\label{eq:kappaL}
\kappa_L :=  12 A^3 (N')^3  \cdot  \phi(N),
\end{equation}
where $\phi(N) := [\Gamma: \Gamma_0(N)] = N \prod_{p \mid N \text{ prime}}(1 + \frac{1}{p})$.
In particular $\kappa_L$ can be chosen to divide $12 (AM)^3 \phi(2AM)$.
\end{thm}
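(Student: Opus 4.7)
The plan is to verify the assertion in three stages: the automorphic property together with the $\xi_1$-identity, a structural formula for $\tTheta^+(\tau,L)$, and the denominator bound on the Fourier coefficients.

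First I would handle the easy parts. By Proposition \ref{prop:GammaN} the inner function $\Cc_{L,N}\tTheta(N\tau,NL)$ lies in $\Ac_{1,\rho_{-L}}(\Gamma_0(N))$, so the average in \eqref{eq:tTheta_gen} over the finite coset space $\Gamma_0(N)\backslash\Gamma$ of size $\phi(N)$ is well-defined and invariant under $\mid_{1,\rho_{-L}}\gamma$ for every $\gamma\in\Gamma$; this gives $\tTheta(\tau,L)\in\Ac_{1,\rho_{-L}}(\Gamma)$. For the $\xi_1$-identity, I apply $\xi_1$ termwise: each slashed piece becomes $\Theta(\tau,L;1)\mid_{1,\rho_L}\gamma$, and since $\Theta(\tau,L;1)\in\Ac_{1,\rho_L}(\Gamma)$ by the discussion in Section \ref{subsec:vvTheta}, the average collapses to $\Theta(\tau,L;1)$.

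Next I would extract a usable formula for $\tTheta^+(\tau,L)$. The key input is a scaling identity for $\tTheta^*$ mirroring Lemma \ref{lemma:average}: the kernel $\tphi^*_\tau$ depends on $\tau$ and the lattice embedding in exactly the same way $\phi_\tau$ does under $\tau\mapsto N\tau$, $L\mapsto NL$, so the same unfolding calculation yields
\[
\Cc_{L,N}\tTheta^*(N\tau,NL)=\tTheta^*(\tau,L).
\]
Hence $\Cc_{L,N}\tTheta(N\tau,NL)=f_e^+ - \tTheta^*(\tau,L)$ with $f_e^+:=\Cc_{L,N}\tTheta^+(N\tau,NL)$. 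The left side being $\Gamma_0(N)$-modular of type $\rho_{-L}$ forces its holomorphic and non-holomorphic parts to be individually modular, so $f_e^+ \in M^!_{1,\rho_{-L}}(\Gamma_0(N))$. Substituting into \eqref{eq:tTheta_gen} and adding $\tTheta^*(\tau,L)$ gives
\[
\tTheta^+(\tau,L)=\frac{1}{\phi(N)}\sum_{\gamma\in\Gamma_0(N)\backslash\Gamma}f_e^+\mid_{1,\rho_{-L}}\gamma \;+\; R(\tau,L),
\]
where $R(\tau,L):=\tTheta^*(\tau,L)-\frac{1}{\phi(N)}\sum_\gamma\tTheta^*(\tau,L)\mid_{1,\rho_{-L}}\gamma$. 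This $R$ is holomorphic: since $\xi_1\tTheta^*(\tau,L)=-\Theta(\tau,L;1)$ is $\Gamma$-modular, $\xi_1 R=0$. Both summands therefore belong to $M^!_{1,\rho_{-L}}(\Gamma)$.

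Third, the denominator bound. Proposition \ref{prop:tTheta} gives Fourier coefficients of $f_e^+$ at $\infty$ in $\frac{1}{6AN'}\Zb[L^*/L]$, while its expansions at the other cusps of $\Gamma_0(N)$ are governed by the Weil representation (with entries in $\Qb(\zeta_{d_L})$). Slashing $f_e^+$ by $\gamma\in\Gamma$ contributes two sources of denominator: the factor $(c\tau+d)^{-1}$ brings in the cusp width at $\gamma\cdot\infty$ (a divisor of $N$), and the matrix entries of $\rho_{-L}(\gamma)$ contribute factors proportional to $\sqrt{|L^*/L|}$, hence to $|L^*/L|=M^2 D$ after symmetrization. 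Tracking these carefully bounds each slashed term's denominator by $6AN'\cdot A\cdot NM=12A^3(N')^3$; the outer $\frac{1}{\phi(N)}$ produces $\kappa_L=12A^3(N')^3\phi(N)$. The same accounting, applied to the $\tTheta^*$-slashes making up $R$, shows $R$ obeys the same denominator bound. The explicit choice $N=2AM$, $N'=M$ yields the specialization $\kappa_L\mid 12(AM)^3\phi(2AM)$.

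The main obstacle is the rationality of the averaged Fourier coefficients: each individual slashed term $f_e^+\mid\gamma$ has cyclotomic coefficients in $\Qb(\zeta_{d_L})[L^*/L]$, and it is not a priori clear that the sum lands in $\Qb[L^*/L]$ so that the bound in $\frac{1}{\kappa_L}\Zb[L^*/L]$ is meaningful. Here I invoke Proposition \ref{prop:rationality}: the Galois action of $\sigma_a\in\Gal(\Qb(\zeta_{d_L})/\Qb)$ on Weil matrix entries is realized by $J_a$-conjugation on $\Gamma_0(N)\backslash\Gamma$, so summing over the entire coset space produces a $\Gal(\Qb(\zeta_{d_L})/\Qb)$-fixed, hence $\Qb$-valued, expression. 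Combining this Galois equivariance with the cusp-width and Weil-matrix denominator accounting is the technical crux of the proof.
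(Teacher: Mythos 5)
Your first paragraph (full modularity of the average and the $\xi_1$-identity) is fine, and your scaling identity $\Cc_{L,N}\tTheta^*(N\tau,NL)=\tTheta^*(\tau,L)$ is indeed correct -- it is exactly the $\gamma=1$ instance of the identity the paper proves. But the structural step that carries the rest of your argument is false: from the $\Gamma_0(N)$-modularity of $\Cc_{L,N}\tTheta(N\tau,NL)$ you conclude that its holomorphic and non-holomorphic parts are \emph{individually} modular, so that $f_e^+=\Cc_{L,N}\tTheta^+(N\tau,NL)\in M^!_{1,\rho_{-L}}(\Gamma_0(N))$. Modularity of a sum never forces modularity of the two pieces, and here it demonstrably fails: $\tTheta^+(\tau,NL)$ is built from the genuinely mock forms $\tilde{\thetab}^+$ (whose shadow $\thetab$ is nonzero), and the paper stresses that $\tTheta^*(\tau,L)$ is \emph{not} modular. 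Consequently your decomposition $\tTheta^+(\tau,L)=\frac{1}{\phi(N)}\sum_\gamma f_e^+\mid_{1,\rho_{-L}}\gamma + R(\tau,L)$ does not consist of weakly holomorphic modular pieces, and -- this is the real gap -- you have no mechanism to control the Fourier coefficients of the holomorphic correction $R=\tTheta^*(\tau,L)-\frac{1}{\phi(N)}\sum_\gamma\tTheta^*(\tau,L)\mid_{1,\rho_{-L}}\gamma$. Saying ``the same accounting applied to the $\tTheta^*$-slashes'' does not work: those slashes are real-analytic functions involving $\erfc$-terms with no rational Fourier structure, and $R$ has no reason to vanish in your setup, so the rationality and the bound $\frac{1}{\kappa_L}\Zb[L^*/L]$ are simply not established.

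The paper's proof closes exactly this hole in a different way. Instead of slashing the $\gamma=1$ identity, it first rewrites each summand of \eqref{eq:tTheta_gen} via \eqref{eq:mat_iden} as $\frac{N_\gamma}{N}\rho^{-1}_{-L}(\gamma)\Cc_{L,N}\rho_{-NL}(\gamma_N)\tTheta(\tau_\gamma,NL)$, and then proves the identity \eqref{eq:la_id} for \emph{every} $\gamma\in\Gamma$ (by showing the difference is annihilated by $\xi_1$, is $N$-periodic, decays at $\infty$, and hence has vanishing Fourier coefficients). This shows the non-holomorphic content of every slashed summand is exactly $\tTheta^*(\tau,L)$ -- not $\tTheta^*(\tau,L)\mid\gamma$ -- so that $\tTheta^+(\tau,L)$ equals the average \eqref{eq:tTheta+} of the $\rho$-twisted series $\tTheta^+(\tau_\gamma,NL)$, which have cyclotomic coefficients with denominators controlled by Proposition \ref{prop:tTheta} and Scheithauer's bounds on the Weil matrix entries; rationality then comes from the Galois/McGraw argument you correctly anticipate, but applied to that expression and including the action of $\sigma_a$ on the translates $\tau_\gamma$. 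Without an analogue of \eqref{eq:la_id} for all cosets (or some other way to identify $R$ and bound its coefficients), your proof does not reach the conclusion of the theorem.
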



\begin{proof}
Since $\xi_1$ commutes with $\mid_1$ and conjugates $\rho_{-L}$ to $\rho_{L}$, we obtain the first claim from Proposition \ref{prop:GammaN}.

To prove the second claim, we will first show that $\tTheta^+(\tau, L) \in \Qb[L^*/L]\pars{q}$, then give a bound of the denominator.
For $\gamma = \smat{*}{*}{c}{*} \in \Gamma$, we can write $N_\gamma := \gcd(N, c)$ and 
\begin{equation}
\label{eq:gammaN}
\pmat{N}{}{}{1} \gamma = \gamma_N \cdot \pmat{N_\gamma}{b}{0}{N/N_\gamma}, \; b \in \Zb, \; \gamma_N \in \Gamma.
\end{equation}
Then for every $\gamma \in \Gamma, \tau \in \Hc$ and $f \in \Ac_{1, \rho}(\Gamma)$, we have
\begin{equation}
\label{eq:mat_iden}
  f(N\tau) \mid_{1} \gamma =
\frac{N_\gamma}{N} 
\rho(\gamma_N) \cdot  f(\tau_\gamma), \;
\tau_\gamma := (\gamma_N^{-1} \smat{N}{}{}{1} \gamma) \cdot \tau = \frac{N_\gamma \tau + b}{N/N_\gamma} \in \Hc.
\end{equation}
Now applying this to $f(\tau) = \tTheta(\tau, NL) \in \Ac_{1, \rho_{-NL}}(\Gamma)$ gives us
$$
\tTheta(N\tau, NL) \mid_{1} \gamma =
\frac{N_\gamma}{N} 
\rho_{-NL}(\gamma_N) \cdot  \tTheta^+(\tau_\gamma, NL).
$$
Substituting this into the definition of $\tTheta(\tau, L)$ gives us
\begin{equation}
\label{eq:average1}
\tTheta(\tau, L) = \frac{1}{ [\Gamma: \Gamma_0(N)]}
\sum_{\gamma \in \Gamma_0(N) \backslash \Gamma} \frac{N_\gamma}{N}
\rho_{-L}^{-1}(\gamma) \cdot 
 \Cc_{L, N} \cdot 
\rho_{-NL}(\gamma_N) 
 \tTheta(\tau_\gamma, NL).
\end{equation}

Notice that we have the following (rather cute) linear algebra identity relating the non-holomorphic parts 
\begin{equation}
  \label{eq:la_id}
\tTheta^*(\tau, L) =  \frac{N_\gamma}{N} \rho^{-1}_{-L}(\gamma) \cdot \Cc_{L, N} \cdot 
\rho_{-NL}(\gamma_N) \tTheta^*(\tau_\gamma, NL), \; \gamma \in \Gamma.
\end{equation}
To prove this identity, we first apply equation \eqref{eq:mat_iden} to $f(\tau) = \Theta(\tau, NL; 1) \in \Ac_{1, \rho_{NL}}(\Gamma)$ to obtain
$$
\Theta(\tau, L; 1) =  \frac{N_\gamma}{N} \rho^{-1}_{L}(\gamma) \cdot \Cc_{L, N} \cdot 
\rho_{NL}(\gamma_N) \Theta(\tau_\gamma, NL; 1)
$$
for all $\gamma \in \Gamma$.
Since $\xi_1(\tTheta^*(\tau_\gamma, NL)) = \Theta(\tau_\gamma, NL; 1)$, we see that the difference between the two sides of equation \eqref{eq:la_id} vanishes under $\xi_1$, implying that it is holomorphic. 
Furthermore, this difference is stable under $\tau \mapsto \tau + N$ and vanishes as $v \to \infty$, so has a Fourier expansion $\sum_{n \ge 1} a_n \ebf(n\tau/N)$. However, integrating this difference against $\ebf(-nu/N)$ over $u \in [0, N]$ equals to zero for all $n \ge 1$, which means $a_n = 0$ for all $n \ge 1$. Thus the difference vanishes identically. 
Now substituting $\tTheta(\tau_\gamma, NL) = \tTheta^+(\tau_\gamma, NL) - \tTheta^*(\tau_\gamma, NL)$ and identity \eqref{eq:la_id} into equation \eqref{eq:average1}, we can write
\begin{equation}
  \label{eq:tTheta+}
  \tTheta^+(\tau, L) = \frac{1}{[\Gamma: \Gamma_0(N)]}
\sum_{\gamma \in \Gamma_0(N) \backslash \Gamma} \frac{N_\gamma}{N}
\rho_{-L}^{-1}(\gamma) \cdot 
 \Cc_{L, N} \cdot 
\rho_{-NL}(\gamma_N) \cdot \tTheta^+(\tau_\gamma, NL).
\end{equation}

The Weil representations $\rho_{-L}$ and $\rho_{-NL}$ are defined over $\Qb(\zeta_{N^\dagger})/\Qb$ with $N^\dagger = ADMN$ and $\zeta_{N^\dagger}$ a primitive $(N^\dagger)$\tth root of unity.
For $a \in (\Zb/N^\dagger \Zb)^\times$, recall that $J_a = \smat{1}{}{}{a} \in \GL_2(\Zb/N^\dagger \Zb)$ and $\sigma_a \in \Gal(\Qb(\zeta_{N^\dagger})/\Qb)$ be the corresponding element as in Section \ref{subsec:weilrep}.
Let $\gamma' \in \Gamma$ be any element such that its image in $\Gamma(N^\dagger) \backslash \Gamma  \cong \SL_2(\Zb/N^\dagger \Zb)$ is $J_a^{-1} \gamma J_a$. Then $N_\gamma = N_{\gamma'}$ and we can write
$$
\pmat{N}{}{}{1} \gamma' = \gamma'_N \cdot \pmat{N_\gamma}{ab}{0}{N/N_\gamma}
$$
with the image of $\gamma'_N \in \Gamma$ in $\SL_2(\Zb/N^\dagger \Zb)$ being $J_a^{-1} \gamma_N J_a$.

Since $\tTheta^+(\tau, NL) \in \Qb[L^*/NL]\pars{q^{1/d_{NL}}}$ and $\tau_\gamma = \frac{N_\gamma \tau + b}{N/N_\gamma}$ with $b \in \Zb$, 
we have
$$
\sigma_a \tTheta^+(\tau_\gamma, NL) = 
\tTheta^+ \lp \frac{N_\gamma \tau + ab}{N/N_\gamma}, NL \rp 
=
\tTheta^+ \lp \tau_{\gamma'}, NL \rp, \;
\tau_{\gamma'} := \lp (\gamma'_N )^{-1} \pmat{N}{}{}{1} \gamma' \rp \cdot \tau.
$$
Since $d_{-L} \mid d_{-NL} \mid N^\dagger$, the representations $\rho_{-L}$ and $\rho_{-NL}$ are trivial on $\Gamma(N^\dagger)$.
By Proposition \ref{prop:rationality}, we have
$$
\sigma_a \lp N_\gamma \rho_{-L}^{-1}(\gamma) \cdot \Cc_{L, N} \cdot
\rho_{-NL}(\gamma_N) \cdot \tTheta^+(\tau_\gamma, NL)
\rp
=
N_{\gamma'} \rho_{-L}^{-1}(\gamma') \cdot \Cc_{L, N} \cdot
 \rho_{-NL} (\gamma'_N)
\cdot 
\tTheta^+ \lp \tau_{\gamma'}, NL \rp.
$$
Thus, $\sigma_a$ permutes the summands on the right hand side of equation \eqref{eq:tTheta+}, which means that $\tTheta^+(\tau, L)$ has Fourier coefficients in $\Qb$. 


From the explicit formula of Weil representation in \cite[Theorem 4.7]{Sch09}, we know that the denominator of every entry in $\rho_{-L}$, resp.\ $\rho_{-NL}$, is bounded by $\sqrt{AM}$, resp.\ $N\sqrt{AM}$. 
Proposition \ref{prop:tTheta} and the choice of $N$ in equation \eqref{eq:N} tells us that the denominator of $\tTheta^+(\tau', NL)$ is bounded by $6AN'$. 
Thus, the denominator of $\tTheta^+(\tau, L)$ is bounded by $\kappa_L$.
\end{proof}

\section{Deformation of Theta Integral.}
\label{sec:tvartheta}

In this section, we will construct a harmonic Maass form with the following property.
\begin{thm}
  \label{thm:Main}
In the notation of Section \ref{sec:lift}, let $M \in \Nb$, $D \ge 1$ a discriminant, $\af \subset \Oc_D$ an integral ideal with $A:= [\Oc_D : \af]$, and $\vartheta(L, \tau)$ the vector-valued cusp form associated to $(L, Q) = (M\af, \frac{\Nm}{AM})$.
There exists a harmonic Maass form $\tvartheta(\tau, L_{}) = \sum_{h \in L^*/L} \tvartheta_h(\tau, L) \ef_h$ in $H_{1, \rho_{-L}}(\SL_2(\Zb))$ such that $\xi_1(\tvartheta(\tau, L)) = \vartheta(\tau, L)$ and its holomorphic part $\tvartheta^+_h(\tau, L)$ has the Fourier expansion
$$
\tvartheta^+_h(\tau, L) = \sum_{n \in \Qb, n \gg -\infty} c^+_L(n, h) q^n,
$$
with the Fourier coefficient $c_L^+(n, h)$ satisfying
\begin{equation}
\label{eq:coeff}
\begin{split}
c^+_L(n, h) &-  \sum_{\begin{subarray}{c} \lambda \in \Gamma_L \backslash L + h \\ -Q(\lambda) = n > 0 \end{subarray}} \sgn(\lambda) \log \left| \frac{\lambda}{\lambda'} \right| \in \frac{1}{\kappa_{}} \Zb \cdot  \log \varep_L
\end{split}
\end{equation}
for an explicit constant  $\kappa_{} \in \Nb$ depending on $D$ and $M$ only.
In particular, when $\af$ is a proper $\Oc$-ideal and $\gcd(A, M) = 1$, then one can choose $\kappa_{}$ to divide $24 M^3 \phi(2M)$, where $\phi$ is the multiplicative function defined in Theorem \ref{thm:Main_sc_1}.
\end{thm}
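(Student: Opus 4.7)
Following the outline in Section \ref{sec:Intro}, my plan is to deform Hecke's integral \eqref{eq:vartheta} with a spectral parameter $s \in \Cb$, take the derivative at $s = 0$, and correct the resulting non-modularity with the function $\tTheta(\tau, L)$ built in Theorem \ref{thm:tTheta_gen}. Concretely, I will consider
\begin{equation*}
  \tvartheta(\tau, L; s) := -\int_1^{\varepsilon_L} \tTheta^*(\tau, L; t) \, t^s \, \frac{dt}{t}, \quad s \in \Cb,
\end{equation*}
where $\tTheta^*(\tau, L; t)$ from \eqref{eq:tphi} is the non-modular preimage kernel satisfying $\xi_1 \tTheta^*(\tau, L; t) = -\Theta(\tau, L; t)$. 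The integration is compact, so $\tvartheta(\tau, L; s)$ is entire in $s$. The factor $t^s$ breaks the $\Gamma_L$-invariance of the integrand in a controlled way and, upon applying $\partial_s$ at $s = 0$, exposes an extra contribution that distinguishes elements of $L+h$ of positive and negative norm.

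\textbf{Modular correction.} The real-analytic function $\partial_s|_{s=0}\tvartheta(\tau, L; s)$ is not modular in $\tau$, but I will show that its modular defect is a pure boundary contribution arising from the walls $t \in \{1, \varepsilon_L\}$ of the fundamental domain for $\Gamma_L$ on $\Rb^\times_+$. A careful unfolding/integration by parts, using the $\Gamma_L$-equivariance of $\tTheta^*$ to cancel the boundary at $t = \varepsilon_L$ against the one at $t = 1$, identifies this defect with exactly $-\tTheta^*(\tau, L) := -\tTheta^*(\tau, L; 1)$, as anticipated in the Introduction. Since $\tTheta(\tau, L) + \tTheta^*(\tau, L) = \tTheta^+(\tau, L)$ is a modular rational Laurent series by Theorem \ref{thm:tTheta_gen}, adding $\tTheta(\tau, L)$ cancels the non-modular piece. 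I will define
\begin{equation*}
  \tvartheta(\tau, L) := \left. \partial_s \right|_{s = 0} \tvartheta(\tau, L; s) + \tTheta(\tau, L),
\end{equation*}
which will be modular with representation $\rho_{-L}$. Its $\xi_1$-image is $\vartheta(\tau, L)$ because $\xi_1 \tTheta(\tau, L) = \Theta(\tau, L; 1)$ combines with the integral's $\xi_1$-image via Proposition \ref{prop:gtau}(1) through the same boundary mechanism; growth at the cusp follows from the Laurent structure of $\tTheta^+$ and analytic control of the $s$-derivative, placing $\tvartheta(\tau, L)$ in $H_{1, \rho_{-L}}(\SL_2(\Zb))$.

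\textbf{Fourier expansion.} The Fourier coefficient $c^+_L(n, h)$ is then computed by expanding the derivative as a sum over $\lambda \in \Gamma_L \backslash (L+h) \setminus \{0\}$ and inserting $t^s$ into the analogue of the integral identity \eqref{eq:integral_id}. For $\lambda$ with $\lambda\lambda' > 0$, $\partial_s|_{s=0}$ produces the weight-one Whittaker factor $\Gamma(0, 4\pi n v)$, forming the non-holomorphic part of $\tvartheta(\tau, L)$ as dictated by the $\xi_1$-preimage property. For $\lambda$ with $\lambda\lambda' < 0$, i.e.\ $-Q(\lambda) = n > 0$, the $s = 0$ value of the integral vanishes by the sign cancellation observed right after \eqref{eq:integral_id}, but the $s$-derivative contributes $\sgn(\lambda) \log|\lambda/\lambda'|$ to the $q^n$-coefficient. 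Summed over orbit representatives this is exactly the sum in \eqref{eq:coeff}, well-defined modulo $\Zb \log \varepsilon_L$ because the substitution $\lambda \mapsto \varepsilon_L \lambda$ shifts $\log|\lambda/\lambda'|$ by $2 \log \varepsilon_L$. The holomorphic part of $\tTheta(\tau, L)$ equals $\tTheta^+(\tau, L)$ whose coefficients lie in $\frac{1}{\kappa_L}\Zb[L^*/L]$ by Theorem \ref{thm:tTheta_gen}; passed through the orbit-$\log\varepsilon_L$ ambiguity these contribute the tolerance $\frac{1}{\kappa} \Zb \cdot \log \varepsilon_L$ in \eqref{eq:coeff}, with $\kappa$ a divisor of $\kappa_L \mid 12(AM)^3 \phi(2AM)$. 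The sharper bound $24 M^3 \phi(2M)$ in the proper-ideal, coprime case follows by replacing $\af$ with a representative in the narrow ideal class of smaller conductor so as to absorb the factor $A$.

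\textbf{Main obstacle.} The delicate technical step is the rigorous identification in the second paragraph of the modularity defect of $\partial_s|_{s=0}\tvartheta(\tau, L; s)$ as precisely $-\tTheta^*(\tau, L)$. Since $\tTheta^*(\tau, L; t)$ involves the sign-discontinuous, non-Schwartz function $\gn_\tau$ appearing in \eqref{eq:tphi}, controlling its behaviour at the walls $t \in \{1, \varepsilon_L\}$ under the regularized integration by parts, and tracking the conjugation by $\SL_2(\Zb)$, requires careful use of the defining identities of $\gn_\tau$ (Proposition \ref{prop:gtau}) and of the theta transformation in \eqref{eq:phi_prop}. A secondary difficulty is extracting the explicit denominator $\kappa$ in \eqref{eq:coeff} through the rational structure of $\tTheta^+$ and the orbit-ambiguity mechanism.
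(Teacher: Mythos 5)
Your high-level ingredients match the paper's (a $t^s$-deformation of a theta integral over $[1,\varepsilon_L]$, the correction term $\tTheta(\tau,L)$ from Theorem \ref{thm:tTheta_gen}, coefficients given by orbit sums of $\sgn(\lambda)\log|\lambda/\lambda'|$ well defined modulo $\tfrac{1}{\kappa}\Zb\log\varepsilon_L$), but the central construction as you set it up does not work. You deform the integral of the \emph{non-modular} preimage kernel $\tTheta^*(\tau,L;t)$ instead of the modular kernel $\Theta(\tau,-L;t)$ of \eqref{eq:s_int}, and this breaks both properties you need. First, the $\xi_1$-image is wrong: since $\xi_1\tTheta^*(\tau,L;t)=-\Theta(\tau,L;t)$ for each fixed $t$, your $s$-derivative $-\int_1^{\varepsilon_L}\log t\,\tTheta^*(\tau,L;t)\tfrac{dt}{t}$ maps under $\xi_1$ to $\int_1^{\varepsilon_L}\log t\,\Theta(\tau,L;t)\tfrac{dt}{t}$, not to $\vartheta(\tau,L)$, and adding $\tTheta(\tau,L)$ further adds $\Theta(\tau,L;1)$; no boundary mechanism is available here, because nothing in your integrand is exhibited as a $d_t$-derivative (Proposition \ref{prop:diff_ops} supplies such an identity precisely for the kernels $\Theta(\tau,\mp L;t)$, which is why the paper puts the modular kernel inside the deformed integral). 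Second, the modularity repair cannot work as stated: $\tTheta^*(\tau,L;t)$ fails modularity in $\tau$ for \emph{every} fixed $t$ (because $\tphi^*_\tau$ is incompatible with $\Fc_0$ and Poisson summation), while it is invariant under $t\mapsto\varepsilon_L t$, so the defect of your integral is not a wall contribution at $t\in\{1,\varepsilon_L\}$; moreover adding the \emph{modular} function $\tTheta(\tau,L)$ can never cancel a failure of modularity, and your justification rests on the false assertion that $\tTheta^+=\tTheta+\tTheta^*$ is modular (Theorem \ref{thm:tTheta_gen} only gives that it is a translation-invariant $q$-series with controlled rational coefficients). Even if you replaced the $s$-derivative by the value at $s=0$, which does have $\xi_1$-image $\vartheta(\tau,L)$, the deformation would then be vacuous and the non-modularity would remain unrepairable by this device.

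The working mechanism, which is the paper's, has the roles of $\Theta$ and $\tTheta^*$ interchanged relative to your plan: deform $I(\tau,-L,s)=\int_1^{\varepsilon_L}t^s\,\Theta(\tau,-L;t)\tfrac{dt}{t}$, so that $I'(\tau,-L)$ is automatically modular (a compact integral of a modular kernel); the defect is then a failure of \emph{harmonicity}, computed via Proposition \ref{prop:diff_ops} and integration by parts in $t$ — this is where the walls $t=1,\varepsilon_L$ and $\Gamma_L$-invariance genuinely enter — giving $2\xi_1 I'(\tau,-L)=\vartheta(\tau,L)-\log\varepsilon_L\,\Theta(\tau,L;1)$, which is repaired by adding $\log\varepsilon_L\,\tTheta(\tau,L)$ (note the factor $\log\varepsilon_L$, absent from your formula). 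The kernel $\tTheta^*(\tau,L;1)$ only appears afterwards, inside the Fourier expansion of $2I'(\tau,-L)$ (the $J_3$-piece of Proposition \ref{prop:FE_I'}), where it is traded for $\log\varepsilon_L\tTheta^+$; relatedly, your coefficient computation invokes \eqref{eq:integral_id}, which applies to the Gaussian kernel $\phi_\tau$, not to the erfc-type kernel $\tphi^*_\tau$ you integrated. Finally, obtaining a $\kappa$ depending only on $D$ and $M$, and the divisor $24M^3\phi(2M)$ in the proper, coprime case, requires the equivalence-of-ideals and gcd argument of Section \ref{subsec:proof}; your one-line appeal to choosing another representative "to absorb the factor $A$" does not supply it.
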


\begin{rmk}
  \label{rmk:choice}
The summation in equation \eqref{eq:coeff} is finite and the choice of representative $\lambda \in \Gamma_L \backslash L + h$ does not affect the statement of the result.
\end{rmk}

\begin{rmk}
Using a trick with Stokes' theorem, the theorem above immediately implies that the Petersson norm of $\vartheta(\tau, L)$ is in $\frac{1}{\kappa} \Zb \cdot \log \varep_L$.
Using the Fourier-Jacobi expansion of theta integrals by Kudla \cite{Kudla16}, the second author has shown that this norm is in fact always in $\frac{1}{12} \Zb \cdot \log \varep_L$ \cite{Li17b}.
\end{rmk}

\begin{rmk}
  When $L$ is isotropic, a similar statement holds, and the Fourier coefficients are logarithms of rational numbers (see \cite{Li17a}).
\end{rmk}

The starting point of the construction is the deformed integral $I(\tau, -L, s)$ defined by
\begin{equation}
\label{eq:s_int}
 I(\tau, -L, s):= \int_{1}^{\varep_L}   t^s \Theta(\tau, -L; t) \frac{dt}{t}.
\end{equation}
Since $[1, \varep_L]$ is compact, $I(\tau, -L, s)$ is holomorphic for $s \in \Cb$ and has the following Taylor series expansion at $s = 0$
\begin{equation}
\label{eq:Laurent}
I(\tau, -L, s) = \vartheta(\tau, -L) + I'(\tau, -L) s + O(s^2),
\end{equation}
where $I'(\tau, -L) = \frac{\partial}{\partial s} I(\tau, -L, s) \mid_{s = 0}$. By Proposition \ref{prop:diff_ops}, applying $\xi_1$ to $I(\tau, L,  s)$ gives us
\begin{align*}
\xi_1 I(\tau, -L, s) &= -\frac{1}{2} \int^{\varep_L}_1   t^s d_t \Theta(\tau, L; t) \frac{dt}{t} 
= \frac{1 - \varep_L^s}{2} \Theta(\tau, L; 1)  + \frac{s}{2}  I(\tau, L, s)
\end{align*}
after using integration by parts, which implies
\begin{equation}
\label{eq:key_eq}
2 \xi_1(I'(\tau, -L)) + \log \varep_L \cdot \Theta(\tau, L; 1) = \vartheta(\tau, L).
\end{equation}
Recall that $\tTheta(\tau, L) \in \Ac_{1, \rho_{-L}}(\Gamma)$ is the preimage of $\Theta(\tau, L; 1)$ under $\xi_1$ as in Theorem \ref{thm:tTheta_gen}. Then $2I'(\tau, -L) + \log \varep_L \cdot \tTheta(\tau, L) \in \Ac_{1, \rho_{-L}}(\Gamma)$ is a preimage of $\vartheta(\tau, L)$ under $\xi_1$. 
In the rest of this section, we will calculate its Fourier expansion and prove Theorem \ref{thm:Main}.

\subsection{Fourier expansion of $I'(\tau, -L)$.}
Now, we will calculate the Fourier expansion of 
\begin{equation}
\label{eq:I'h}
I'_h(\tau, -L) := \int^{\varep_L}_1 \log t \cdot \Theta_h(\tau, -L; t) \frac{dt}{t}
\end{equation}
for each $h \in L^*/L$. 
To state the main result, we will first setup a few notations and make some choices.
For simplicity, we write $\varep = \varep_L$. For $\lambda \neq 0$, let
\begin{equation}
\label{eq:ratio}
r(\lambda) := \left| \frac{\lambda}{\lambda'} \right|.
\end{equation}
For each orbit $\Lambda \in \Gamma_L \backslash L + h$ with $Q(\Lambda) \neq 0$, we fix a representative $\lambda_0 \in \Lambda$ such that 
\begin{equation}
  \label{eq:rep}
  1 \le  r(\lambda_0) < \varep^2.
\end{equation}
%
After such a representative has been fixed, we will set $ \sgn(\Lambda) := \sgn(\lambda_0)$,
\begin{equation}
\label{eq:aLambda}
a(\Lambda) := 
\begin{cases}
\sgn(\lambda_0) \log r(\lambda_0), & r(\lambda_0) \neq 1, \\
- {\log \varepsilon}{}, & r(\lambda_0) = 1,
\end{cases}
\end{equation}
and use the convenient notation 
\begin{equation}
  \label{eq:lambdan}
\lambda_n := \lambda_0 \varep^n.  
\end{equation}
Notice that $-2 \log \varepsilon < a(\Lambda) < 2\log \varepsilon$.
We can now state the main result of this section.
\begin{prop}
  \label{prop:FE_I'}
The function $I'_h(\tau, -L)$ has the Fourier expansion
\begin{equation}
  \label{eq:FE_I'}
  2I'_h(\tau, -L) = \sum_{\begin{subarray}{c} \Lambda \in \Gamma_L \backslash L + h \\ Q(\Lambda) < 0 \end{subarray}} a(\Lambda) \ebf(-Q(\Lambda)\tau)
-
\tvartheta^*_h(\tau, L)
- \log \varepsilon \cdot  \tTheta^*_h(\tau, L),
\end{equation}
where $ \tvartheta^*_h(\tau, L)$ and $ \tTheta^*_h(\tau, L)$ are given by
\begin{align*}
  \tvartheta^*_h(\tau, L) &= \sum_{\begin{subarray}{c} \lambda \in \Gamma_L \backslash L + h\\ Q(\lambda) > 0 \end{subarray}} \sgn(\lambda) \Gamma(0, 4 \pi Q(\lambda) v) q^{-Q(\lambda)}, \;
\tTheta^*_h(\tau, L) = \sum_{\lambda \in L + h} \tphi^*_\tau \lp \frac{\lambda + \lambda'}{\sqrt{2AM}}, \frac{\lambda - \lambda'}{\sqrt{2AM}} \rp.
\end{align*}
\end{prop}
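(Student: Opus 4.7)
The plan is to unfold $I'_h$ along $\Gamma_L$-orbits on $(L+h)\setminus\{0\}$, carefully tracking the way $\log t$ fails to be $\Gamma_L$-invariant. Writing $\lambda_n = \lambda_0 \varepsilon^n$ for the representatives fixed in \eqref{eq:rep}, the equivariance $B(\lambda\varepsilon, Z^\pm_t) = B(\lambda, Z^\pm_{t/\varepsilon})$ together with $\log(u\varepsilon^n) = \log u + n\log\varepsilon$ yields, after substituting $t = u\varepsilon^n$ in each $\lambda_n$-summand and summing over $n$,
\[
I'_h \;=\; \sum_\Lambda \bigl[I_A(\lambda_0) + \log\varepsilon \cdot I_C(\lambda_0)\bigr],
\]
where $I_A(\lambda_0) := \int_0^\infty (\log u) \varphi(\lambda_0, u)\, du/u$ is a main integral over the full positive line, $I_C(\lambda_0) := -\int_0^\infty \lfloor \log u/\log\varepsilon\rfloor\, \varphi(\lambda_0, u)\, du/u$ is an index-weighted correction, and $\varphi(\lambda_0, u) := \phi_\tau(B(X_{\lambda_0}, Z^+_u), B(X_{\lambda_0}, Z^-_u))$ is the $\lambda_0$-summand of $\Theta_h(\tau, -L; u)$ from \eqref{eq:theta_explicit}.

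I would compute $\sum_\Lambda I_A$ by the classical Hecke manipulation: substituting $u = e^\nu$ and shifting $\nu = \nu_0 + \mu$ with $\nu_0 = \tfrac{1}{2}\log r(\lambda_0)$ centres the Gaussian factor, and the $\mu$-odd part of the integrand integrates to zero. When $Q(\lambda_0) < 0$ the remaining $\nu_0$-proportional piece evaluates cleanly to $\tfrac{1}{2}\sgn(\lambda_0)\log r(\lambda_0)\, \ebf(-Q(\Lambda)\tau) = \tfrac{1}{2} a(\Lambda)\, \ebf(-Q(\Lambda)\tau)$ generically. When $Q(\lambda_0) > 0$ the surviving $\int_\Rb \mu \sinh\mu\, e^{-4\pi v Q \sinh^2\mu}\, d\mu$ reduces, after one integration by parts taking an antiderivative in $\cosh\psi$ followed by a Fubini swap, to $\int_0^\infty \erfc(2\sqrt{\pi v Q}\cosh\psi)\,d\psi = \tfrac{1}{2}\Gamma(0, 4\pi Q v)$. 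Doubling and summing over orbits produces the first two terms of the expansion, $\sum_{Q(\Lambda) < 0} a(\Lambda)\ebf(-Q(\Lambda)\tau) - \tvartheta^*_h(\tau, L)$.

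The decisive step is evaluating $I_C$. The observation is that the non-exponential factor of $\varphi(\lambda_0, u)$ is an exact differential: setting $w(\xi) := \lambda_0 e^{-\xi} + \lambda_0' e^\xi$, one has $(\lambda_0 e^{-\xi} - \lambda_0' e^\xi)\, d\xi = -dw$, so
\[
G(\xi) \;:=\; -\tfrac{1}{2}\sqrt{AM/v}\,\erf\!\bigl(w(\xi)\sqrt{\pi v/AM}\bigr) \;=\; G_\infty(\xi) + G_{\mathrm{dec}}(\xi)
\]
is an antiderivative of the integrand of $I_C$ with respect to $\xi = \log u$. It splits naturally into a piecewise-constant $G_\infty = -\tfrac{1}{2}\sqrt{AM/v}\sgn(w)$ (with at most one jump, located at $\xi = \nu_0$ when $Q(\lambda_0) < 0$) and a super-exponentially decaying $G_{\mathrm{dec}} = \tfrac{1}{2}\sqrt{AM/v}\sgn(w)\erfc(|w|\sqrt{\pi v/AM})$. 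Integration by parts of $\int_\Rb \lfloor \xi/\log\varepsilon\rfloor\, G'(\xi)\, d\xi$ against the distributional derivative $\sum_n \delta(\xi - n\log\varepsilon)$ of the floor function then yields the convergent Riemann sum $-\sum_n G_{\mathrm{dec}}(n\log\varepsilon)$ from the decaying part, while the single $\delta$-spike of $G_\infty'$ at $\xi = \nu_0$ is weighted by $\lfloor L(\lambda_0)\rfloor = 0$ thanks to the normalisation \eqref{eq:rep}. The substitution $n \mapsto -n$ converts $w(n\log\varepsilon) = \lambda_{-n}+\lambda_{-n}'$ into $\lambda_n+\lambda_n'$, matching $I_C(\lambda_0)$ with one-half the orbit contribution to $\tTheta^*_h(\tau, L)$ term-by-term.

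The main technical obstacle is precisely this antiderivative analysis for $I_C$. A direct integration by parts against $G$ is invalid because $G$ does not vanish at $\pm\infty$; the $G_\infty/G_{\mathrm{dec}}$ split is engineered so that the boundary term is either zero (for $G_{\mathrm{dec}}$) or absorbed into the $\lfloor L(\lambda_0)\rfloor$ choice (for $G_\infty$). A secondary delicate point is the boundary orbits with $r(\lambda_0) = 1$, where $\nu_0 = 0$ places the $\delta$-spike of $G_\infty'$ exactly on an integer multiple of $\log\varepsilon$; a symmetric interpretation of $\lfloor 0 \rfloor$ then produces the exceptional assignment $a(\Lambda) = -\log\varepsilon$ of \eqref{eq:aLambda}, compensating the vanishing of $I_A(\lambda_0)$ in this case.
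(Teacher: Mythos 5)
Your route is essentially the paper's, reorganized: your $I_A$ is the paper's $J_1+J_2$ (centering at $\nu_0=\tfrac12\log r(\lambda_0)$, killing the odd part; your evaluation of the $Q>0$ piece via $\int_0^\infty\erfc\lp a\cosh\psi\rp d\psi=\tfrac12\Gamma(0,a^2)$ is correct and supplies a detail the paper only tabulates), and your $I_C$ is the paper's $J_3$, with the $G_\infty/G_{\mathrm{dec}}$ split playing exactly the role of $\erf(x)=\sgn(x)-\sgn(x)\erfc(|x|)$ followed by Abel summation. For generic orbits ($r(\lambda_0)\neq 1$) the argument goes through.

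The genuine gap is the boundary orbits with $r(\lambda_0)=1$. There is no freedom to ``interpret'' $\lfloor 0\rfloor$: $I'_h$ is an absolutely convergent integral of a continuous function against a weight defined almost everywhere, so the exceptional constant is forced and cannot be tuned to reproduce \eqref{eq:aLambda}. The $\delta$-spike of $G_\infty'$ is an artifact of your splitting (the full antiderivative $G$ is smooth, and $G_{\mathrm{dec}}$ carries a compensating jump at $\nu_0$); when that jump sits at $\xi=0$ you must do the one-sided fundamental-theorem bookkeeping on the two adjacent intervals, which replaces the $n=0$ sample by the one-sided value $G_{\mathrm{dec}}(0^-)$. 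Carrying this out, the trace-zero orbit contributes the extra term $\sgn(\Lambda)\log\varepsilon\cdot\ebf(-Q(\Lambda)\tau)$, not $-\log\varepsilon\cdot\ebf(-Q(\Lambda)\tau)$. A decisive check: for $\lambda_0>0$ with $\lambda_0'=-\lambda_0$, every $t$-integral in the orbit sum is strictly positive after factoring out $\ebf(-Q(\Lambda)u)$, while the $\erfc$-sum for this orbit vanishes identically under $n\mapsto-n$; so the exceptional coefficient must be $+\log\varepsilon$ for this orbit. Relatedly, your own matching of $2\log\varepsilon\,I_C$ with the orbit part of $\tTheta^*_h$ comes out with a plus sign, which is what consistency with \eqref{eq:key_eq} and with the assembly of $\theta^+$ in Section \ref{subsec:proof} requires, but is opposite to the sign printed in \eqref{eq:FE_I'}. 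These discrepancies are integer multiples of $\log\varepsilon$ and are harmless for Theorem \ref{thm:Main} (they are absorbed into $\frac1\kappa\Zb\cdot\log\varepsilon_L$), but as written your treatment of the exceptional orbits is not a proof: it appeals to a non-existent convention, and the value it claims to recover is not the one the honest boundary analysis yields, nor do you reconcile your signs with the displayed statement.
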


\begin{proof}
Substituting in equation \eqref{eq:theta_explicit} gives us
\begin{align*}
  I'_h(\tau, -L) &= 
\sqrt{\frac{v}{AM} } 
\sum_{\begin{subarray}{c} \Lambda \in \Gamma_L \backslash L + h \\ Q(\Lambda) \neq 0 \end{subarray}} 
\ebf \lp -Q(\Lambda) u \rp
\sum_{\lambda \in \Lambda}
\int^\varep_1
\lp
\lambda t^{-1} - \lambda't 
\rp
\ebf\lp
\frac{\lp (\lambda t^{-1})^2 + (\lambda' t)^2  \rp iv }{2AM}
\rp
\log t \frac{dt}{t} 
\end{align*}
Since $\Lambda = \{\lambda_0 \varep^n: n \in \Zb\}$, each sum over $ \Lambda$ becomes
\begin{align*}
\sum_{n \in \Zb}
\int^\varep_1 
\lp
\lambda_n t^{-1} - \lambda'_n  t 
\rp
\ebf\lp
\frac{\lp (\lambda_{n} t^{-1})^2 + (\lambda'_n t)^2  \rp iv }{2AM}
\rp
\log t \frac{dt}{t} .
\end{align*}
In each summand, let $\nu :=  -\frac{\log r(\lambda_n)}{2} + \log t $. Then, we can write $\log t = \nu  + \frac{\log r(\lambda_0)}{2} + n \log \varep $ and break $I'_h(\tau, -L)$ into three pieces
\begin{equation}
  \label{eq:I'h_2}
2  I'_h(\tau, -L) =
\sum_{\begin{subarray}{c} \Lambda \in \Gamma_L \backslash L + h \\ Q(\Lambda) \neq 0 \end{subarray}} 
\sgn(\Lambda) \ebf \lp -Q(\Lambda) \tau \rp
\lp J_1(\Lambda)  + J_2(\Lambda)  + J_3(\Lambda)  \rp
\end{equation}
where $J_1(\Lambda), J_2(\Lambda)$ and $J_3(\Lambda)$ are defined by
\begin{align*}
J_1(\Lambda) &:=  
{ \log r(\lambda_0)}
\int^\infty_{-\infty} \sqrt{|Q(\Lambda )|v}  \lp  e^{-\nu} - \sgn(Q(\Lambda)) e^\nu \rp
\ebf\lp \frac{|Q(\Lambda)| \lp e^{\nu} + \sgn(Q(\Lambda)) e^{-\nu}  \rp^2 iv }{2}
\rp d\nu , \\
  J_2(\Lambda) &:=  2
\int^\infty_{-\infty}
\sqrt{|Q(\Lambda)|v} \lp  e^{-\nu} - \sgn(Q(\Lambda)) e^\nu \rp
\ebf\lp \frac{|Q(\Lambda)|  \lp e^{\nu} +  \sgn(Q(\Lambda)) e^{-\nu}  \rp^2 iv }{2} \rp
\nu d \nu, \\
J_3(\Lambda) &:=  
\frac{\sgn(\Lambda) 2 \log \varep \sqrt{v}}{\sqrt{AM}} 
\sum_{n \in \Zb} n \int^\varep_1  \lp \lambda_n  t^{-1} - \lambda'_n t \rp
\ebf\lp \frac{ (\lambda_n t^{-1} + \lambda'_n t)^2  iv }{2AM}
\rp \frac{dt}{t}.
\end{align*}
Note that $J_1$ and $J_2$ comes naturally out of the unfolding process Hecke used, which does not work directly for $J_3$.
Like equation \eqref{eq:integral_id}, we can evaluate the terms $J_1(\Lambda)$ and $J_2(\Lambda)$ as
\begin{center}
\begin{tabular}{|c|c|c|}\hline
  &  $-Q(\Lambda) > 0$ & $-Q(\Lambda) < 0$ \\ \hline
$J_1(\Lambda)$ & ${\sgn(\Lambda) \log r(\lambda_0)} \ebf(-Q(\Lambda) iv)$ & 0\\ \hline
$J_2(\Lambda)$ & 0 & $- \sgn(\Lambda) \ebf(2 Q(\Lambda) iv) \Gamma(0, 4 \pi Q(\Lambda) v)$ \\ \hline 
\end{tabular}
\end{center}
Using the identity $d_t \erf(a t + b t^{-1}) = \frac{2}{\sqrt{\pi}}(at - bt^{-1}) e^{-(at + bt^{-1})^2}$, we obtain
$$
2 \sqrt{\frac{v}{AM}} 
\int^\varep_1
\lp
\lambda'_n t  - \lambda_n t^{-1}
\rp
\ebf\lp
\frac{(\lambda_n t^{-1} + \lambda'_n t)^2  iv }{2AM}
\rp
 \frac{dt}{t} 
=
\erf \lp \sqrt{\frac{\pi v }{AM}}  (\lambda'_{n} t  + \lambda_{n} t^{-1} ) \rp  \; |^\varep_1.
$$
Applying this and the identity $\erf(x) = \sgn(x) - \sgn(x) \erfc(|x|)$ to $J_3(\Lambda)$ gives us
\begin{align*}
&\sgn(\Lambda)  J_3(\Lambda)
=
 { \log \varep}
 \sum_{n \in \Zb} n 
\lp 
\erf \lp \sqrt{\frac{\pi v }{AM}}  (\lambda'_{n-1}  + \lambda_{n-1} ) \rp -
\erf \lp \sqrt{\frac{\pi v }{AM}}  (\lambda'_n  + \lambda_n ) \rp
\rp\\
&= 
 { \log \varep}{} \lp \sgn(\lambda_{0} + \lambda'_{0}) - \sgn(\lambda_1 + \lambda'_1) \rp 
+ {\log \varep}{} \sum_{n \in \Zb} 
\sgn(\lambda_{n} + \lambda'_{n})
 \erfc \lp \sqrt{\frac{\pi v }{AM}}  |\lambda'_{n}  + \lambda_{n} | \rp.
\end{align*}
The first term is $- {\sgn(\Lambda) \log \varep}{}$ if $\lambda_0 = - \lambda'_0$ and zero otherwise. 
After substituting these into equation \eqref{eq:I'h_2}, we obtain equation \eqref{eq:FE_I'}.
\end{proof}

\subsection{Proof of Theorem \ref{thm:Main}.}
\label{subsec:proof}
Suppose $(L, Q) = (L_{\af, M}, Q_{\af, M})$ for some $\af \subset \Oc_D \subset \Oc_F$ and $M \in \Nb$. Define
\begin{equation}
\label{eq:theta}
\theta(\tau, L) := 2I'(\tau, -L) + \log \varep_L \tTheta(\tau, L). 
\end{equation}
By Theorem \ref{thm:tTheta_gen} and Proposition \ref{prop:FE_I'}, $\theta(\tau, L) \in H_{1, \rho_{-L}}(\Gamma)$ and the holomorphic part $\theta^+(\tau, L) := \theta(\tau, L) + \tvartheta^*(\tau, L)$ is given by
$$
\theta^+(\tau, L) = \sum_{h \in L^*/L} \ef_h \sum_{\Lambda \in \Gamma_L \backslash L+ h, \; Q(\Lambda) < 0} \sgn(\Lambda) \log r(\Lambda) q^{-Q(\Lambda)} + \log \varep_L \tTheta^+(\tau, L). 
$$
Theorem \ref{thm:tTheta_gen} implies that $\theta(\tau, L)$ satisfies the Theorem \ref{thm:Main} with $\kappa = \kappa_L$.

Now, we can reduce $\kappa$ as follows. 
Let $\bfrak$ be another ideal of $\Oc_D$ such that $\af = \bfrak \cdot  (\mu \Oc_D)$ with a totally positive element $\mu \in F$ satisfying $\mu - 1 \in M \df_D$.
This is an equivalence relation, under which there are only finitely many equivalence classes of $\Oc_D$-ideals.
Let $A_1 := [\Oc_D : \bfrak]$ and $N_1\in \Nb$ such that $N_1 M = 2A (N'_1)^2$ for some $N'_1 \in \Nb$. 
Then $\vartheta(\tau, L) = \vartheta(\tau, L_{\bfrak})$ and $\theta_1(\tau, L) := \theta(\tau, L_{\bfrak}) + \log \left| \frac{\mu}{\mu'} \right| \vartheta(\tau, -L)$ also satisfies Theorem \ref{thm:Main} with $\kappa = \kappa_{L_{\bfrak}}$.
Suppose $\kappa' = \gcd(\kappa_L, \kappa_{L_\bfrak}) = c_0 \kappa_L + c_1 \kappa_{L_\bfrak}$ with $c_0, c_1 \in \Zb$, then $\frac{c_0\kappa_L \cdot \theta(\tau, L) + c_1 \kappa_{L_\bfrak} \cdot \theta_1(\tau, L)}{\kappa_2}$ will satisfy the statement in Theorem \ref{thm:Main} with $\kappa = \kappa'$. 
Since there are only finitely many equivalence classes, we can repeated this process finitely many times to find the minimal $\kappa$, which only depends on the data $\Oc_D$ and $M$. This proves the last part of Theorem \ref{thm:Main}.

In particular when the $\Oc_D$-ideal $\af$ is proper and $\gcd(A, M) = 1$, we can choose a totally positive $\mu \in F$ such that $\mu - 1 \in M \df_D$ and $\mu \Oc_D = \af \bfrak^{-1}$ for some integral, proper $\Oc_D$-ideal $\bfrak$ relatively prime to $\af$. Then it is possible to reduce $\kappa$ to at least $24 M^3 \phi(2M) $.

\section{Scalar-Valued Result and Numerical Examples.}
In this section, we will use the result in \cite{Stark88} to produce a scalar-valued version of $\vartheta(\tau, L)$, and prove the scalar-valued version of Theorem \ref{thm:Main}.

\subsection{Reducing the level.}
\label{subsec:lev_red}
We first need to reduce the level of certain vector-valued automorphic forms.
Fix a fundamental discriminant $D > 1$, an integral ideal $\mf \subset \Oc_F \subset F := \Qb(\sqrt{D}) \subset \Rb$ and denote $M = \Nm(\mf)$, $N = MD$ and $\chi_D(\cdot) = \lp \frac{D}{\cdot} \rp$ the quadratic Dirichlet character.
Let $\af \subset \Oc_F$ be an arbitrary integral ideal relatively prime to $\mf$ with $A:=\Nm(\af )$, and denote $L = L_{\af \df, M}$. Then $L^* = \af$ and there is a canonical surjection map of finite abelian groups 
\begin{equation}
\label{eq:pi_surj}
 L^*/L = \af/M\df \af \to \af/\mf  \af \stackrel{\cong}{\hookrightarrow} \Oc_F/\mf.
\end{equation}
This induces a natural, linear map from $\Cb[L^*/L]$ to $\Cb[\af/\mf \af]$.
%
%
Let $\{\ef_\sigma: \sigma \in \af / \mf \af  \}$ be the canonical basis of $\Cb[\af / \mf \af ]$. 
Under this and the canonical basis of $\Cb[L^*/L]$, the linear map is given by the matrix
\begin{equation}
  \label{eq:Cc_mf}
  \Cc_{\af, \mf} := (\mathds{1}_{\mf \af}(h - \sigma))_{h \in L^*/L, \sigma \in \af/\mf \af},
\end{equation}
where $\mathds{1}_{\mf \af}$ is the characteristic function of $\mf \af \subset \af$.
%
%
%

Define a representation $\rho_{\mf}$ of $\Gamma_0(N)$ on $\Cb[\Oc_F/\mf]$ by 
\begin{equation}
  \label{eq:rho_mf}
  \rho_{\mf}(\gamma) \ef_\sigma = \chi_D(d) \ef_{d \sigma}, \gamma = \smat{*}{*}{*}{d} \in \Gamma_0(N).
\end{equation}
Since $\af$ and $\mf$ are relatively prime to each other, there is a canonical isomorphism between $\Cb[\af/\mf \af]$ and $\Cb[\Oc_F/\mf]$ induced by $\af/\mf  \af \stackrel{\cong}{\hookrightarrow} \Oc_F/\mf$.
Conjugating $\rho_\mf$ by this isomorphism gives rise to a representation of $\Gamma_0(N)$ on $\Cb[\af/\mf \af]$, which we also denote by $\rho_\mf$.
%
We can now use Theorem 3 in \cite{Stark88} to study precisely the effect of $\Cc_{\af, \mf}$ on the level of the vector-valued automorphic forms such as $ \Theta(N\tau,  L; t)$ in equation \eqref{eq:theta_explicit}. 
\begin{prop}
  \label{prop:mod_N_Theta}
In the notations above, we have $\Cc_{\af, \mf} \cdot \Theta(N\tau, \pm L; t) \in \Ac_{1, \rho_{\mf}}(\Gamma_0(N))$ for any $t \in \Rb^\times_+$.
\end{prop}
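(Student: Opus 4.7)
The plan is to follow the strategy of the proof of Proposition~\ref{prop:GammaN}: reduce the modularity statement to a matrix identity intertwining the Weil representation $\rho_{\pm L}$ with the scalar-type representation $\rho_\mf$, and then establish this identity either by a power-series argument using theta series or by invoking Theorem~3 of \cite{Stark88}.

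Concretely, for $\gamma = \smat{a}{b}{c}{d} \in \Gamma_0(N)$, the condition $N \mid c$ guarantees the matrix factorization $\smat{N}{}{}{1}\gamma = \gamma' \smat{N}{}{}{1}$ with $\gamma' := \smat{a}{Nb}{c/N}{d} \in \Gamma$ (and $\det(\gamma')=1$). Using the transformation law $\Theta(\tau, \pm L; t) \in \Ac_{1, \rho_{\pm L}}(\Gamma)$, a direct slash computation yields
\[
\bigl(\Cc_{\af, \mf} \cdot \Theta(N\tau, \pm L; t)\bigr) \bigm|_{1, \rho_\mf} \gamma = \rho_\mf(\gamma)^{-1} \cdot \Cc_{\af, \mf} \cdot \rho_{\pm L}(\gamma') \cdot \Theta(N\tau, \pm L; t),
\]
so the proposition reduces to the matrix intertwining identity
\[
\rho_\mf(\gamma) \cdot \Cc_{\af, \mf} = \Cc_{\af, \mf} \cdot \rho_{\pm L}(\gamma') \qquad (\star)
\]
for all $\gamma \in \Gamma_0(N)$.

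To establish $(\star)$, one can mimic the proof of Proposition~\ref{prop:GammaN}: introduce the weight-$0$, $\rho_{\pm L}$-valued theta series $\theta(\tau, \pm L; t)$ used there, repeat the slash-invariance computation for $\Cc_{\af, \mf} \cdot \theta(N\tau, \pm L; t)$, and deduce the functional identity $M_\gamma \cdot \theta(N\tau, \pm L; t) \equiv 0$, where $M_\gamma$ denotes the difference of the two sides of $(\star)$. From the asymptotic behavior in $v$ of the Fourier coefficients of $\theta(N\tau, \pm L; t)$, which populate every basis vector $\ef_h$ of $\Cb[L^*/L]$, one then concludes that $\ef_h$ lies in the right kernel of $M_\gamma$ for every $h$, so $M_\gamma = 0$. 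More directly, $(\star)$ is essentially the content of Stark's Theorem~3 in \cite{Stark88}, which handles precisely this type of transformation law for theta series attached to real quadratic fields, including the $\chi_D(d)$ twist, and may be cited as a black box.

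The main obstacle is proving $(\star)$ itself: it compares the intricate Weil representation on the large discriminant group $L^*/L = \af/M\df\af$, conjugated by $\smat{N}{}{}{1}$, with the simple $\chi_D$-twisted character action of $\rho_\mf$ on the quotient $\af/\mf\af \cong \Oc_F/\mf$. Verifying it directly on generators of $\Gamma_0(N)$ via the formulas \eqref{eq:Weil_rep} for $T$ and $S$ is mechanical but tedious because the map $\Cc_{\af, \mf}$ collapses a large finite quadratic module, and the cleaner route is to appeal to Stark's calculation, which packages the combinatorics of the norm form and the quadratic character in one step.
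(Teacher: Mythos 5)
Your reduction of the proposition to the intertwining identity $(\star)$ is a correct sufficient condition, but your two proposed routes to $(\star)$ both have genuine gaps. Route (a) is circular: in Proposition \ref{prop:GammaN} the functional identity $M_\gamma\cdot\theta(N\tau,NL;t)=0$ came for free because $\Cc_{L,N}\cdot\theta(N\tau,NL;t)=\theta(\tau,L;t)$ holds \emph{tautologically} (it is just the coset decomposition of one and the same lattice sum) and the full-level modularity of $\theta(\tau,L;t)$ is already known from Borcherds. Here there is no such tautology: $\Cc_{\af,\mf}\cdot\theta(N\tau,\pm L;t)$ is not identified with any object whose $\Gamma_0(N)$-transformation with respect to $\rho_\mf$ is known, so you have no functional equation from which to extract $M_\gamma\cdot\theta(N\tau,\pm L;t)\equiv 0$; assuming one is assuming a weight-$0$ analogue of the very proposition you are proving. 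Route (b) misattributes content to \cite{Stark88}: Theorem 3 there is a statement about the theta \emph{functions}, not about an abstract intertwining of $\rho_\mf$ with the Weil representation, and as used in the paper it yields modularity of $\Cc_{\af,\mf}\cdot\Theta(AN\tau,L;t)$ on $\Gamma_0(AN)$, not of $\Cc_{\af,\mf}\cdot\Theta(N\tau,L;t)$ on $\Gamma_0(N)$. The paper's proof therefore contains a descent step that is entirely missing from your proposal: one checks directly from the Fourier expansion that $\Cc_{\af,\mf}\cdot\Theta(N(\tau+1),L;t)=\Cc_{\af,\mf}\cdot\Theta(N\tau,L;t)$ (the exponents $\Nm(\lambda)/A$ are integral because $A\mid\Nm(\lambda)$ for $\lambda\in\af$), and then uses that $\Gamma_0(N)$ is generated by $\smat{1}{1}{0}{1}$ together with the matrices $\smat{a}{b}{c}{d}\in\Gamma_0(N)$ with $A\mid b$, for which Stark's level-$AN$ statement applies.

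A further warning sign that your plan to prove the full matrix identity $(\star)$ via theta functions cannot work as stated: the paper itself never establishes $(\star)$. In Proposition \ref{prop:mod_N_gen}, where exactly this kind of identity is needed for arbitrary $f\in\Ac_{1,\rho_{\pm L}}(\SL_2(\Zb))$, the authors can only conclude from the (odd-component) theta functions that the right kernel of $M_\gamma$ contains the vectors $\ef_h-\ef_{-h}$, and they then exploit that every weight-one form for $\rho_{\pm L}$ has components satisfying $f_h=-f_{-h}$. So the function-level input only ever controls $M_\gamma$ on the antisymmetric subspace, and what Proposition \ref{prop:mod_N_Theta} actually requires (and what the paper proves) is the statement $M_\gamma\cdot\Theta(N\tau,\pm L;t)=0$ itself, obtained from Stark's explicit transformation formula plus the level descent, not the stronger matrix identity $(\star)$.
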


%

\begin{proof}
%
For each $\sigma \in \af/\mf \af$, we denote the $\ef_\sigma$-component of $\Cc_{\af, \mf} \Theta(\tau, L; t)$ by $\Theta_\sigma(\tau, \af, \mf; t)$.
  From the definition, it is easy to check that
\begin{equation}
\label{eq:Theta_sigma}
\Theta_\sigma(AN \tau, \af, \mf ; t) = \sqrt{v} \sum_{\lambda \in \af \mf  + \sigma} (\lambda' t + \lambda t^{-1}) \ebf \lp \Nm(\lambda) u + \frac{1}{2} ((\lambda t^{-1})^2 + (\lambda' t)^2) iv \rp.
\end{equation}
Theorem 3 in \cite{Stark88} with the choice of integral ideal $I=\af\mf$ and $\sigma\in \af$ then implies $\Cc_{\af, \mf} \Theta(AN \tau, L; t)) \in \Ac_{1, \rho_{\mf}}(\Gamma_0(AN))$.
It follows readily from equation (\ref{eq:Theta_sigma}) that $\Cc_{\af, \mf} \Theta(N(\tau + 1), L; t) = \Cc_{\af, \mf} \Theta(N\tau , L; t).$  Since $\Gamma_0(N)$ is generated by $\smat{1}{1}{0}{1}$ and the matrices $\smat{a}{b}{c}{d} \in \Gamma_0(N)$ satisfying $A \mid b$, we obtain the desired result. The same argument works for $-L$.
\end{proof}

\begin{rmk}
  \label{rmk:lin_comb}
Integrating over $t,$ it is easy to verify from Proposition \ref{prop:mod_N_Theta} and  equation (\ref{eq:vartheta}) that 
\begin{equation}
  \Cc_{\af, \mf} \cdot \vartheta(N\tau, \pm L) = \lp \sum_{\lambda \in \Gamma_L \backslash (\af\mf + \sigma),\; \pm \Nm(\lambda) > 0 } \sgn(\lambda) q^{|\Nm(\lambda)|/A} \rp_{\sigma \in \af/\mf \af} \in S_{1, \rho_\mf}(\Gamma_0(N)).
\end{equation}
\end{rmk}

Recall from Section \ref{subsec:vvTheta} that $\Theta(\tau, \pm L ; t) \in \Ac_{1, \rho_{\pm L}}(\SL_2(\Zb))$ for all $t \in \Rb^\times_+$. 
It turns out that one can bootstrap a more general result out of the proposition above.

\begin{prop}
  \label{prop:mod_N_gen}
In the notations above, $\Cc_{\af, \mf} \cdot f(N\tau)$ is in $\Ac_{1, \rho_\mf}(\Gamma_0(N))$ for all $f \in \Ac_{1, \rho_{\pm L}}(\SL_2(\Zb))$.
\end{prop}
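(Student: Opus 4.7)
The plan is to reduce the claim to a matrix identity between the Weil representation matrices and the intertwiner $\Cc_{\af,\mf}$, and then to verify that matrix identity by using the theta kernel (whose modularity is already handled by Proposition \ref{prop:mod_N_Theta}) as a probe.

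First, for $\gamma = \smat{a}{b}{c}{d} \in \Gamma_0(N)$, let $\gamma' := \smat{a}{Nb}{c/N}{d} \in \SL_2(\Zb)$, which is well-defined since $N \mid c$. By construction $\smat{N}{}{}{1}\gamma = \gamma' \smat{N}{}{}{1}$, so $N\gamma\tau = \gamma'(N\tau)$ and $(c\tau+d) = ((c/N)(N\tau) + d)$. For any $f \in \Ac_{1,\rho_{\pm L}}(\SL_2(\Zb))$, the weight-$1$ automorphy of $f$ under $\gamma'$ yields
\begin{equation*}
(\Cc_{\af,\mf} \cdot f(N\tau)) \mid_{1,\rho_\mf} \gamma = \rho_\mf(\gamma)^{-1} \Cc_{\af,\mf} \,\rho_{\pm L}(\gamma') \cdot f(N\tau).
\end{equation*}
Hence the required modularity is equivalent to the matrix identity
\begin{equation*}
\rho_\mf(\gamma)\cdot \Cc_{\af,\mf} = \Cc_{\af,\mf}\cdot \rho_{\pm L}(\gamma') \qquad \text{for every } \gamma \in \Gamma_0(N),
\end{equation*}
a statement purely about the representations that does not involve $f$.

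To establish this identity I will bootstrap from Proposition \ref{prop:mod_N_Theta}, exactly mimicking the strategy in the proof of Proposition \ref{prop:GammaN}. Set $M_\gamma := \rho_\mf(\gamma)\,\Cc_{\af,\mf} - \Cc_{\af,\mf}\,\rho_{\pm L}(\gamma')$. Applying the displayed computation above to $f = \Theta(\,\cdot\,, \pm L; t)$ and combining with Proposition \ref{prop:mod_N_Theta} gives
\begin{equation*}
M_\gamma \cdot \Theta(N\tau, \pm L; t) = 0
\end{equation*}
for all $\tau \in \Hc$ and all $t \in \Rb^\times_+$. It then suffices to show that the vectors $\Theta(N\tau,\pm L;t)$ span $\Cb[L^*/L]$ as $\tau$ and $t$ vary. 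This is read off from the explicit Fourier expansion \eqref{eq:theta_explicit}: each component $\Theta_h(N\tau,\pm L;t)$ is supported on Fourier modes indexed by $\lambda \in L^*$ with $\lambda \equiv h \pmod L$, so the supports for distinct cosets $h$ are disjoint as subsets of $\frac{1}{AM}\Zb$, and within any one coset the dependence on $t$ is non-degenerate (for instance, the leading non-zero $q^{\ast}\overline{q}^{\ast}$-coefficient of $\Theta_h$ is a non-trivial Laurent polynomial in $t$, so varying $t$ isolates the basis vector $\ef_h$). Hence $M_\gamma$ annihilates all of $\Cb[L^*/L]$ and is the zero matrix.

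Once the matrix identity is in hand, applying it inside the displayed computation for a general $f \in \Ac_{1,\rho_{\pm L}}(\SL_2(\Zb))$ yields $(\Cc_{\af,\mf} \cdot f(N\tau)) \mid_{1,\rho_\mf}\gamma = \Cc_{\af,\mf}\cdot f(N\tau)$ for every $\gamma \in \Gamma_0(N)$, which is exactly the statement to be proved. The only non-routine step is the Fourier-theoretic spanning argument, but as in the proof of Proposition \ref{prop:GammaN} it is essentially a linear-independence observation packaged by disjointness of the supports of the $\Theta_h$ together with the $t$-dependence of the kernel.
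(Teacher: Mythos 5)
Your reduction of the claim to a matrix identity, and the idea of testing that identity against the theta kernel via Proposition \ref{prop:mod_N_Theta}, are exactly the paper's strategy. But the spanning argument you use to conclude $M_\gamma=0$ is false, and this is a genuine gap. The kernel in \eqref{eq:theta_explicit} is \emph{odd} in $\lambda$: replacing $\lambda$ by $-\lambda$ is a bijection from $L+h$ to $L+(-h)$ that flips the sign of the factor $\lambda' t+\lambda t^{-1}$, so $\Theta_{-h}(\tau,\pm L;t)=-\Theta_h(\tau,\pm L;t)$ identically (in particular $\Theta_h\equiv 0$ whenever $2h=0$ in $L^*/L$). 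Hence the supports of the $h$- and $(-h)$-components coincide rather than being disjoint, their $t$-dependence is identical up to sign, and no choice of $\tau$ and $t$ ``isolates $\ef_h$'': every vector $\Theta(N\tau,\pm L;t)$ lies in the proper subspace of $\Cb[L^*/L]$ spanned by $\{\ef_h-\ef_{-h}\}$. Your probe therefore only shows that this subspace lies in the right kernel of $M_\gamma$; it does not show that $M_\gamma$ is the zero matrix, and the full identity $\rho_\mf(\gamma)\Cc_{\af,\mf}=\Cc_{\af,\mf}\rho_{\pm L}(\gamma')$ is neither established by your argument nor needed for the proposition.

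The missing step, which is how the paper closes the argument, is to exploit the same oddness for an \emph{arbitrary} $f=\sum_h f_h\ef_h\in\Ac_{1,\rho_{\pm L}}(\SL_2(\Zb))$: applying the transformation law \eqref{eq:modularity} with $\gamma=S^2=-I$, whose weight-one automorphy factor is $-1$ while $\rho_{\pm L}(S^2)$ interchanges $\ef_h$ and $\ef_{-h}$ (up to sign, by \eqref{eq:Weil_rep}), forces $f_h=-f_{-h}$. Consequently $2f(N\tau)=\sum_h f_h(N\tau)\lp\ef_h-\ef_{-h}\rp$ lies precisely in the subspace that the theta probe does control, and $M_\gamma\cdot f(N\tau)=0$ follows, giving the modularity of $\Cc_{\af,\mf}\cdot f(N\tau)$. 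Without this input you have only verified the transformation law for vectors in the odd subspace swept out by the theta kernels, not for all of $\Ac_{1,\rho_{\pm L}}(\SL_2(\Zb))$, so as written the proof is incomplete.
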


\begin{proof}
  The proof is analogous to that of Proposition \ref{prop:GammaN}.
For $\gamma = \smat{a}{b}{Nc}{d}$, recall that $\gamma_N =  \smat{a}{Nb}{c}{d}$ as in Proposition \ref{prop:GammaN}. For an arbitrary $f \in \Ac_{1, \rho_{L}}(\SL_2(\Zb))$, we want to show that
$$
(\Cc_{\af, \mf} \cdot f(N\tau)) \mid_{1} \gamma = \rho_\mf(\gamma) \cdot \Cc_{\af, \mf} \cdot f(N\tau).
$$
Since $f(N \tau) \mid_1 \gamma = \rho_L(\gamma_N) \cdot f(N\tau)$, it suffices to prove the identity $M_\gamma \cdot f(N\tau) = 0$, where
$$
M_\gamma := \Cc_{\af, \mf} \cdot \rho_L(\gamma_N) - \rho_\mf(\gamma) \cdot \Cc_{\af, \mf}.
$$
By Proposition \ref{prop:mod_N_Theta}, the equality above holds with $f(\tau) = \Theta(\tau, L; t)$ for any $t \in \Rb^\times_+$.
All Fourier coefficients of the corresponding identity do vanish and therefore, arguing as in Proposition \ref{prop:GammaN}, we know that the right kernel of $M_\gamma$ contains $\{\ef_h - \ef_{-h}: h \in L^*/L\}$. 
From equation \eqref{eq:Weil_rep}, we have $\rho_L(S^2) \ef_h = -\ef_{-h}$ for all $h \in L^*/L$.
Then any $f = \sum_{h \in L^*/L} f_h \ef_h \in \Ac_{1, \rho_L}(\SL_2(\Zb))$ satisfies $f_h = -f_{-h}$. Thus, $ 2 f(N\tau) = \sum_{h \in L^*/L} f_h(N\tau) (\ef_h - \ef_{-h}) $ is in the right kernel of $M_\gamma$. The same argument works for $-L$.
\end{proof}


\subsection{Ray class group character.}
\label{subsec:ray}
Let $\mbf= \mf \cdot \infty_1$ be a modulus with $M = \Nm(\mf)$, $I_\mbf$ be the group of fractional ideals of $\Oc_F$ relatively prime to $\mf$ and
 $P_{\mbf}$ be the group of principal fractional ideals generated by elements $\mu \in F$ such that $\mu \equiv^\times 1 \bmod{\mf}$ and $\mu > 0$.
Consider a ray class group character of conductor $\mbf$
\begin{equation}
  \label{eq:varphi}
  \varphi: \Cl_\mbf := I_\mbf / P_{\mbf} \to \Cb^\times.
\end{equation}
The class group $\Cl_F$ of $F$ is a natural quotient of $\Cl_\mbf$. 
%
The presence of $\infty_1$ in $\mbf$ is equivalent to
\begin{equation}
  \label{eq:varphif}
\varphi((\mu)) = \varphif(\mu) \cdot \sgn(\mu).  
\end{equation}
with $\varphif$ a character on $(\Oc_F/\mf)^\times$ satisfying $\varphif(\varep) = \sgn(\varep)$ for all $\varep \in \Oc_F^\times$. 
After extending by zero, we can view $\varphif$, resp.\ $\varphi$, as a map on $F$, resp.\ fractional ideals of $\Oc_F$.
By a slight abuse of notation, we write $\varphi(\lambda) := \varphi((\lambda))$.

Let $N = DM$ and $\chi$ be the product of $\chi_D$ and the restriction of $\varphi$ to $\Qb$.
Associated to $\varphi$ is a Hecke eigenform $f_\varphi \in S_{1, \chi}(\Gamma_0(N))$ given by
\begin{equation}
  \label{eq:fvarphi}
f_\varphi(\tau) := \sum_{\bfrak \subset \Oc_F} \varphi(\bfrak) q^{\Nm(\bfrak)}
=
\sum_{[\af] \in \Cl_F} \varphi(\af^{-1}) f_{\varphi, \af}(\tau),\;
f_{\varphi, \af}(\tau) := \sum_{(\lambda) \subset \af} \varphi(\lambda ) q^{\Nm((\lambda)\af^{-1} )}.
\end{equation}
Even though $\varphi(\af)$ depends on the representative of $[\af] \in \Cl_F$, the product $\overline{\varphi}(\af) f_{\varphi, \af}$ is independent of such choice.

For fixed integral ideal $\af \in I_\mbf$, let $A = \Nm(\af)$ and $L = L_{\af \df, M}$.
Then we can write $f_{\varphi, \af}(\tau) = {f_{\varphi, \af, +}(\tau) + f_{\varphi, \af, -}(\tau)}$, where
\begin{equation}
\label{eq:fafpm}
f_{\varphi, \af, \pm}(\tau) := \frac{1}{[\Oc^\times_F: \Gamma_L]} \sum_{\lambda \in \Gamma_L \backslash \af, \; \pm \Nm(\lambda) > 0} \varphif(\lambda ) \sgn(\lambda) q^{|\Nm(\lambda)|/A}.
\end{equation}
Now, we can express $f_{\varphi, \af}$ as a linear combination of $\vartheta_{\sigma, \pm}$ as follows.

\begin{prop}
\label{prop:lin_comb}
Let $\Cc_{\varphi} := (\varphif(\sigma))_{\sigma \in \Oc_F/\mf}$ be a row vector.
Then left multiplication by $\Cc_{\varphi}$ is a linear map from $\Ac_{1, \rho_{\mf}}(\Gamma_0(N))$ to $\Ac_{1, \chi}(\Gamma_0(N))$.
In particular, 
\begin{equation}
  \label{eq:lin_comb1}
f_{\varphi, \af, \pm}(\tau) = 
\Cc_{\varphi} \cdot 
\frac{\Cc_{\af, \mf} \cdot \vartheta(N \tau, \pm L) }{[\Oc^\times_F: \Gamma_L]} 
\in S_{1, \chi}(\Gamma_0(N)).
\end{equation}
\end{prop}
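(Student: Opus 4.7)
The plan is to handle the two claims of the proposition in turn.

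For the intertwining claim, I would take an arbitrary $f = \sum_\sigma f_\sigma \ef_\sigma \in \Ac_{1,\rho_\mf}(\Gamma_0(N))$ and a $\gamma\in \Gamma_0(N)$ with bottom-right entry $d$, and use modularity componentwise. From $\rho_\mf(\gamma)\ef_\sigma = \chi_D(d)\ef_{d\sigma}$ one reads off $f_\sigma \mid_1 \gamma = \chi_D(d) f_{d^{-1}\sigma}$. Substituting into $(\Cc_\varphi \cdot f)\mid_1\gamma = \sum_\sigma \varphif(\sigma)(f_\sigma\mid_1\gamma)$, reindexing by $\tau = d^{-1}\sigma$, and using the multiplicativity of $\varphif$ produces a global factor $\chi_D(d)\varphif(d)$. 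The identification $\chi_D(d)\varphif(d) = \chi(d)$ on $(\Zb/N)^\times$, which follows from $\chi = \chi_D \cdot \varphi|_\Qb$ and the agreement of $\varphif$ with $\varphi|_\Qb$ as Dirichlet characters, finishes the intertwining.

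For the explicit identity, I would start from the closed form in Remark \ref{rmk:lin_comb} and apply $\Cc_\varphi$ on the left. The coprimality of $\af$ and $\mf$ yields a canonical isomorphism $\af/\mf\af \cong \Oc_F/\mf$, via which $\varphif$ is well defined and constant on each coset $\mf\af+\sigma$. The double sum $\sum_\sigma \varphif(\sigma)\sum_{\lambda \equiv \sigma}$ then collapses into a single sum over $\lambda \in \Gamma_L \backslash \af$ with $\pm\Nm(\lambda) > 0$, weighted by $\varphif(\lambda)\sgn(\lambda)$. Up to the factor $[\Oc_F^\times : \Gamma_L]^{-1}$, which reflects the splitting of each $\Oc_F^\times$-orbit in $\af$ into that many $\Gamma_L$-orbits (i.e.\ the fact that $f_{\varphi,\af,\pm}$ parametrizes principal ideals rather than $\Gamma_L$-orbits of generators), this matches exactly the defining sum \eqref{eq:fafpm}.

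Finally, membership in $S_{1,\chi}(\Gamma_0(N))$ follows by combining the two claims: Remark \ref{rmk:lin_comb} already places $\Cc_{\af,\mf} \cdot \vartheta(N\tau, \pm L)$ in $S_{1,\rho_\mf}(\Gamma_0(N))$, and the first claim converts this into a scalar-valued cusp form with character $\chi$. The only delicate point I anticipate is threading the sign conventions of the ray class character through the passage from the Hecke character $\varphi$ on ideals to the Dirichlet character $\varphi|_\Qb$ on residues, so as to identify $\chi$ with $\chi_D\cdot\varphif$ in the intertwining step.
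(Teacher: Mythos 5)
Your argument is correct, and it differs from the paper's proof mainly in being self-contained: the paper disposes of the intertwining claim by citing Section 5 of Stark's paper (together with Remark \ref{rmk:lin_comb}), whereas you verify it directly from the explicit formula \eqref{eq:rho_mf}. Your componentwise computation $f_\sigma \mid_1 \gamma = \chi_D(d) f_{d^{-1}\sigma}$, the reindexing $\sigma \mapsto d\sigma$ using multiplicativity of $\varphif$ (valid since $\gcd(d,N)=1$, hence $d$ is invertible mod $\mf$, and $\varphif$ is extended by zero), and the identification $\chi(d)=\chi_D(d)\varphif(d)$ on residues coprime to $N$ are all sound; the last point, which you rightly flag as the delicate one, holds because $\varphi|_\Qb$ and $d\mapsto\varphif(d\bmod\mf)$ are both characters modulo $M$ (note $M=\Nm(\mf)\in\mf$) agreeing on positive integers prime to $M$, where $\varphi((n))=\varphif(n)\sgn(n)=\varphif(n)$. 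For the identity \eqref{eq:lin_comb1} you and the paper both rest on Remark \ref{rmk:lin_comb}; collapsing the double sum using that $\varphif$ is constant on cosets of $\mf\af$ (via $\af/\mf\af\cong\Oc_F/\mf$, and since $\lambda\equiv\sigma\bmod \mf\af$ forces $\lambda\equiv\sigma\bmod\mf$) recovers \eqref{eq:fafpm} exactly; the factor $[\Oc_F^\times:\Gamma_L]^{-1}$ simply appears on both sides by definition, so your parenthetical about orbit splitting is conceptual background rather than a needed step (it is really the bridge between \eqref{eq:fvarphi} and \eqref{eq:fafpm}). What your route buys is independence from Stark's computation at the cost of carrying the character bookkeeping yourself; the paper's citation buys brevity. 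Cuspidality follows as you say, since $\Cc_\varphi$ takes a vector-valued cusp form to a linear combination of its components.
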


\begin{proof}
This is a consequence of Remark \ref{rmk:lin_comb} above and Section 5 of \cite{Stark88}.
\end{proof}

The scalar-valued version of Theorem \ref{thm:Main} is as follows. 
\begin{thm}
  \label{thm:Main_sc}
For each class in $\Cl_F$, fix a representative $\af \in I_\mbf$ with $A = \Nm(\af)$. 
Let $\varphi$ and $f_\varphi \in S_{1, \chi}(\Gamma_0(N))$ be as above. 
There exists a harmonic Maass form $\tf_\varphi \in H_{1, \overline{\chi}}(\Gamma_0(N))$ such that $\xi_1 \tf_\varphi = f_\varphi$ and the holomorphic part of $\tf_\varphi$ has the Fourier expansion $\sum_{n \gg -\infty} c^+_\varphi(n) q^n$ satisfying 
\begin{equation}
  \label{eq:c+}
  c^+_\varphi(n) - 
\sum_{[\af] \in \Cl_F} \varphi(\af) \sum_{(\lambda) \subset \af, \; \Nm((\lambda) \af^{-1}) = n } \overline{\varphi}(\lambda)  \log \left| \frac{\lambda}{\lambda'} \right|
\in 
\frac{1}{\kappa_\mf} \Zb[\varphi] \cdot \log \varep_F
\end{equation}
with $\kappa_\mf \mid 48 M^3 \phi(2M)$.
\end{thm}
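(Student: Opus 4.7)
The strategy is to bootstrap the vector-valued Theorem \ref{thm:Main} into the scalar-valued setting using the level-reduction and character-contraction machinery assembled in Propositions \ref{prop:mod_N_gen} and \ref{prop:lin_comb}. For each class $[\af] \in \Cl_F$, fix an integral representative $\af \in I_\mbf$ coprime to $\mf$ and with $\gcd(\Nm \af, M) = 1$, set $A := \Nm \af$ and $L := L_{\af \df, M}$. Theorem \ref{thm:Main} then produces vector-valued harmonic Maass forms $\tvartheta(\tau, \pm L) \in H_{1, \rho_{\mp L}}(\SL_2(\Zb))$ satisfying $\xi_1 \tvartheta(\tau, \pm L) = \vartheta(\tau, \pm L)$, whose holomorphic coefficients have the structure \eqref{eq:coeff} with error in $\frac{1}{\kappa_L} \Zb \log \varep_L$ and $\kappa_L \mid 24 M^3 \phi(2M)$. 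Note that the index $e := [\Oc_F^\times : \Gamma_L]$ depends only on $D$ and $M$, not on $\af$.

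With these pieces in hand, the candidate lift is
\begin{equation*}
\tf_\varphi(\tau) \;:=\; \frac{1}{e} \sum_{[\af] \in \Cl_F} \overline{\varphi(\af^{-1})} \cdot \overline{\Cc_\varphi} \cdot \Cc_{\af, \mf} \cdot \bigl( \tvartheta(N\tau, L) + \tvartheta(N\tau, -L) \bigr).
\end{equation*}
Proposition \ref{prop:mod_N_gen} lifts each $\tvartheta(N\tau, \pm L)$ to an $H_{1, \rho_\mf}(\Gamma_0(N))$-valued form, and since $\rho_\mf$ is a real representation (because $\chi_D$ is real), contraction against the character row $\overline{\Cc_\varphi}$ yields a scalar-valued harmonic Maass form with character $\bar\chi$; hence $\tf_\varphi \in H_{1, \bar\chi}(\Gamma_0(N))$. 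Applying the antilinear operator $\xi_1$ term by term — using $\xi_1(g(N\tau)) = (\xi_1 g)(N\tau)$ for weight one and real $N$ — replaces $\overline{\Cc_\varphi}$ by $\Cc_\varphi$ and $\overline{\varphi(\af^{-1})}$ by $\varphi(\af^{-1})$, and Proposition \ref{prop:lin_comb} combined with the decomposition $f_\varphi = \sum_{[\af]} \varphi(\af^{-1})(f_{\varphi, \af, +} + f_{\varphi, \af, -})$ then gives $\xi_1 \tf_\varphi = f_\varphi$.

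The Fourier-coefficient extraction is a bookkeeping exercise. Substituting the explicit expansion of $\tvartheta^+(N\tau, \pm L)$ from Theorem \ref{thm:Main} into the formula above, grouping by $\sigma \in \Oc_F / \mf$, and using the isomorphism $\af / \mf \af \cong \Oc_F/\mf$ with the identity $\overline{\varphi(\lambda)} = \overline{\varphif(\lambda)} \sgn(\lambda)$, one sees that the factor $\frac{1}{e}$ exactly compensates the passage from $\Gamma_L$-orbits to principal-ideal orbits. The main term assembles to
\begin{equation*}
\sum_{[\af] \in \Cl_F} \varphi(\af) \sum_{(\lambda) \subset \af,\, \Nm((\lambda)\af^{-1}) = n} \overline{\varphi(\lambda)} \log\left| \frac{\lambda}{\lambda'} \right|,
\end{equation*}
matching the main term of \eqref{eq:c+}.

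The main obstacle is controlling $\kappa_\mf$. The error from each $\tvartheta^+(N\tau, \pm L)$ lies in $\frac{1}{\kappa_L} \Zb \log \varep_L$; since the entries of $\Cc_{\af, \mf}$ and $\Cc_\varphi$ lie in $\Zb[\varphi]$ and $\varphi(\af^{-1}) \in \Zb[\varphi]$, no further denominators are introduced by the linear combination. Writing $\log \varep_L = k_L \log \varep_F$ with $k_L \in \Nb$ (governed by the index $e$ up to a factor of $2$ accounting for whether $-1 \in \Gamma_L$) absorbs the unit-index factor into the numerator. The doubling from $24 M^3 \phi(2M)$ to $48 M^3 \phi(2M)$ comes from jointly accommodating the $\tvartheta(N\tau, L)$ and $\tvartheta(N\tau, -L)$ contributions along with this $\log \varep_L \mapsto \log \varep_F$ conversion, so that $\kappa_\mf \mid 48 M^3 \phi(2M)$. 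The delicate point is ensuring that the choice of class representatives $\af$ can be arranged simultaneously so that $\gcd(\Nm \af, M) = 1$ for \emph{every} class, allowing Theorem \ref{thm:Main}'s improved bound to apply uniformly across the sum.
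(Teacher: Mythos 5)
Your proposal is correct and follows essentially the same route as the paper: for each class you form $\frac{1}{[\Oc_F^\times:\Gamma_L]}\,\Cc_{\overline{\varphi}}\cdot\Cc_{\af,\mf}\cdot\bigl(\tvartheta(N\tau,L)+\tvartheta(N\tau,-L)\bigr)$, invoke Propositions \ref{prop:mod_N_gen} and \ref{prop:lin_comb} for modularity and the $\xi_1$-image, sum over $\Cl_F$ with weights $\varphi(\af)$ using the antilinearity of $\xi_1$, and convert the $\frac{1}{\kappa}\Zb[\varphi]\log\varep_L$ error via $\log\varep_L/[\Oc_F^\times:\Gamma_L]=\tfrac12\log\varep_F$ to get $\kappa_\mf\mid 48M^3\phi(2M)$. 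The only cosmetic difference is your attribution of the factor $2$ partly to combining the $\pm L$ pieces, whereas it comes solely from this index/unit conversion, exactly as in the paper.
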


\begin{proof}
  For each representative $\af \in I_\mbf$, let $L = L_{\af\df, M}$ and $\tvartheta(\tau, \pm L) \in H_{1, \rho_{\pm L}}(\SL_2(\Zb))$ be the harmonic Maass form constructed in Theorem \ref{thm:Main}.
Consider 
\begin{equation}
  \label{eq:tfaf}
  \tf_{\varphi, \af}(\tau) := 
\frac{1}{[\Oc^\times_F: \Gamma_L]} 
\Cc_{\overline{\varphi}} \cdot \lp \Cc_{\af, \mf} \cdot \tvartheta(N \tau, L) 
+
 \Cc_{\af, \mf} \cdot \tvartheta(N \tau, - L) \rp.
\end{equation}
By Propositions \ref{prop:mod_N_gen} and \ref{prop:lin_comb}, $\tf_{\varphi, \af} \in H_{1, \overline{\chi}}(\Gamma_0(N))$ and $\xi_1 \tf_{\varphi, \af} = f_{\varphi, \af}$. Its holomorphic part $\tf^+_{\varphi, \af}$ has the Fourier expansion 
$$
\tf^+_{\varphi, \af}(\tau) = \sum_{n \gg 0} c^+_{\varphi, \af}(n) q^n.
$$
As a consequence of Theorem \ref{thm:Main}, the coefficient $c^+_{\varphi, \af}(n)$ satisfies
$$
c^+_{\varphi, \af}(n) -
\sum_{(\lambda) \subset \af, \; {\Nm((\lambda)\af^{-1})} = n }  \overline{\varphi}((\lambda)) \log \left| \frac{\lambda}{\lambda'} \right|
\in 
\frac{1}{\kappa \cdot [\Oc_F^\times: \Gamma_L]} \Zb[\varphi] \cdot \log \varep_L.
$$
Since $\Oc_F^\times = \{\pm \varepsilon_F^n: n \in \Zb\}$, we have $\frac{\log \varep_L}{[\Oc^\times_F: \Gamma_L]} = \half \log \varepsilon_F$.
By Theorem \ref{thm:Main}, we can choose $\kappa = 24 M^3 \phi(2M)$.
Summing over the representatives of all the classes in $\Cl_F$ with respect to $\overline{\varphi}(\af^{-1}) = \varphi(\af)$ finishes the proof.
\end{proof}

When $F$ has class number one, we can choose $\af = \Oc_F$ as the only summand in the sum over $\Cl_F$ and arrange to have
\begin{align*}
  \sum_{\begin{subarray}{c} (\lambda) \subset \Oc_F \\ \Nm((\lambda)) = n \end{subarray} } \overline{\varphi}(\lambda)  \log \left| \frac{\lambda}{\lambda'} \right| &= 
\frac{1}{2} \lp   \sum_{\begin{subarray}{c} (\lambda) \subset \Oc_F \\ \Nm((\lambda)) = n \end{subarray}}
\overline{\varphi}(\lambda)  \log \left| \frac{\lambda}{\lambda'} \right|  + 
  \sum_{\begin{subarray}{c} (\lambda) \subset \Oc_F \\ \Nm((\lambda)) = n \end{subarray}} \overline{\varphi}(\lambda')  \log \left| \frac{\lambda'}{\lambda} \right| \rp  \\
&= \cbf_{\varphi}(n) \in \Cb/(\Zb[\varphi] \log \varep_F).
\end{align*}
Theorem \ref{thm:Main_sc_1} then follows from Theorem \ref{thm:Main_sc}.

To compare this result with the $p$-adic version in \cite{DLR15a}, we first define the character $\varphi_\heartsuit := \varphi/\varphi'$ with $\varphi'$ the composition of $\varphi$ and the conjugation on $F$. 
The kernel of $\varphi_\heartsuit$ fixes a number field $H$, which is a class field of $F$.
When $\ell$ is an inert prime in $F/\Qb$, Darmon, Lauder and Rotger defined in \cite{DLR15a} an element $u(\varphi_\heartsuit, \ell) \in \Zb[\varphi] \otimes \Oc_H[1/\ell]^\times$ and showed that its $p$-adic logarithm is the $\ell$\tth Fourier coefficient of a generalized overconvergent eigenform of weight one.

When $\ell = \lambda \lambda'$ is a split prime in $F/\Qb$ with $\sigma_\lambda, \sigma_{\lambda'} \in \Gal(H/F)$ the respective Frobenius elements, the $\ell$\tth Fourier coefficient in the $p$-adic setting is zero, whereas we can define an element $u(\varphi_\heartsuit, \ell) \in \Zb[\varphi]\otimes (F^\times/\Oc_F^\times)$ by
\begin{equation}
  \label{eq:ul}
  u(\varphi_\heartsuit, \ell) := \sum_{\sigma \in \Gal(F/\Qb)} \varphi^{-1}_\heartsuit(\sigma \sigma_\lambda \sigma^{-1}) \otimes \sigma(\lambda) \in \Zb[\varphi] \otimes (\Oc_F[1/\ell]^\times / \Oc_F^\times).
\end{equation}
After extending the complex logarithm by $\Zb[\varphi]$-linearity, we have
\begin{equation}
  \label{eq:main_id}
  \cbf_\varphi(\ell) = \log | u(\varphi_\heartsuit, \ell) | \in \Cb/(\Zb[\varphi] \log \varep_F).
\end{equation}
This might be a complicated way to write $\cbf_\varphi(\ell)$ since there are only two elements in $\Gal(F/\Qb)$. Nevertheless, it shows that the archimedean and non-archimedean situations complement each other.
In the latter, the element $u(\varphi_\heartsuit, \ell)$ is well-defined in $\Zb[\varphi] \otimes \Oc_H[1/\ell]^\times$ and can be used to generate class fields of $F$.
In the former, the $\ell$-unit $u(\varphi_\heartsuit, \ell)$ lies in the ground field $F$ and is only well-defined up to $\Zb[\varphi] \log \varep_F$. 
This reflects the difficulty in choosing a canonical harmonic Maass form with a given $\xi$-image. 

\subsection{Examples.}
The first example was studied in detail in Section 7 of \cite{Li16}, where $F = \Qb(\sqrt{29})$, $\mf = \lp \frac{3 + \sqrt{29}}{2} \rp$ and $\varphi(2) = i$. The cusp form $f_\varphi$ is the unique normalized eigenform in $S_{1, \chi}(\Gamma_0(145))$ with $\chi = \lp \frac{\cdot}{29} \rp \cdot \varphi\mid_\Qb$. 
The second author showed loc.\ cit.\ that there exists a unique harmonic Maass form $\mathcal{F}_\varphi$ in $H_{1, \overline{\chi}}(\Gamma_0(145))$ that maps to $f_\varphi$ and has only a simple pole at $i\infty$.  
Using a modularity-based algorithm, the second author numerically calculated the Fourier coefficients $c^+_\varphi(n)$ of the holomorphic part of $\mathcal{F}_\varphi$ as complex numbers. Then they were identified as $\Zb[\varphi]$-linear combinations of logarithm of numbers in $F^\times$. 
When $n = \ell = \lambda \lambda'$ is a split prime in $F$, there was a precise conjecture about the shape of $c^+_\varphi(\ell)$, which is implied by Theorem \ref{thm:Main_sc_1} if $\kappa_\mf = 1$.

The second example is about a rather classical weight one cusp form studied by Hecke \cite{Hecke26}. Set $D = 12, F = \Qb(\sqrt{D}), L = \af = \df_F = 2\sqrt{3} \Oc_F$, $M = 1$, and $L^* = \Oc_F$. 
The discriminant kernel $\Gamma_L$ is generated by $\varep_L = 7 + 4\sqrt{3}$, which is the square of the fundamental unit $\varep_F = 2 + \sqrt{3}$. 
In this case, the theta series $\vartheta(\tau, L)$ is a 12-dimensional, holomorphic vector-valued weight one cusp form on $\SL_2(\Zb)$. The components correspond to $L^*/L = \Oc_F/\df_F$. 
It turns out 8 of the 12 components vanish identically. The other 4 components correspond to $h = \pm 1, \pm ( 2 + \sqrt{3}) \in \Oc_F/\df_F$ and satisfy $\vartheta_{1} = -\vartheta_{-1} = \pm \vartheta_{\pm (2 + \sqrt{3})}$. 
So $\ef := \ef_1 + \ef_{2 + \sqrt{3}} - \ef_{-1} - \ef_{-(2+\sqrt{3})}$ is an eigenvector of $\rho_L(T)$ and $\rho_L(S)$ with eigenvalues $\ebf(1/12)$ and $-i$ respectively. 
%
This implies 
$$
\vartheta_1 \lp -\frac{1}{\tau}, L \rp = - i \tau \vartheta_1(\tau, L), \;
\vartheta_1 \lp \tau + 1, L \rp = \ebf \lp \frac{1}{12} \rp \vartheta_1 (\tau, L).
$$
Since $\eta(\tau)^2 := q^{1/12} \prod_{n = 1}^\infty (1 - q^n)^2$ also satisfies this transformation property and has zero only at the cusps, they are equal up to a multiplicative constant. 
By comparing the first non-vanishing Fourier coefficient, Hecke obtained
$$
\vartheta_1(\tau, L) = \sum_{\begin{subarray}{c} \lambda \in \Gamma_L \backslash \Oc_F\\ \lambda \equiv 1 \bmod 2\sqrt{3}\\ \Nm(\lambda) > 0  \end{subarray}}
\sgn(\lambda) q^{\Nm(\lambda)/12} = \eta^2(\tau) \in S_{1, \rho^{\mathrm{sc}}_L}(\Gamma),
$$
with $\Gamma = \SL_2(\Zb)$ and $\rho^{\mathrm{sc}}_L$ the restriction of $\rho_L$ on the eigenspace spanned by $\ef$.

The representation $\rho^{\mathrm{sc}}_L$ is a multiplier system of $\Gamma$ and can be expressed in terms of Dedekind sums. 
Similarly, $\rho^{\mathrm{sc}}_{-L}$ is the restriction of $\rho_{-L}$ on $\ef$, and the conjugate of $\rho^{\mathrm{sc}}_L$. 
The space $S_{1, \rho^{\mathrm{sc}}_L}(\Gamma)$ is spanned by $\vartheta_1(\tau, L)$ since the 12\tth power of any form in that space is in the one dimensional space $S_1(\Gamma)$. 
On the other hand, the space $S_{1, \rho^{\mathrm{sc}}_{-L}}(\Gamma)$ is trivial and $\vartheta(\tau, -L)$ vanishes identically. 
Therefore the elements $\lambda \in \Oc_F$ with negative norm will never contribute to any holomorphic modular object. 
It is worth noting that $\eta^2(\tau)$ can also be constructed from lattices of signature $(2, 0)$, or equivalently from ramified characters associated to the imaginary quadratic fields $\Qb(\sqrt{-1})$ and $\Qb(\sqrt{-3})$ (see e.g.\ \cite{Hecke26, Sch53}). 
So the methods of \cite{DL15} and \cite{Ehlen_thesis} could also be used to produce a $\xi_1$ preimage of $\eta^2(\tau)$.

Before constructing the harmonic Maass form $\tvartheta(\tau, L)$, notice that $\vartheta(\tau, L) = \vartheta(\tau, L_{\df_F/2, 1})$. So we can suppose that $L = L_{\df/2, 1}$. 
First, we need to construct $\tTheta(\tau, L)$. This can be done as in Section \ref{subsec:general} with $N = 6$ in equation \eqref{eq:N}. Then $|L^*/6L| = 432 = 6^2 |L^*/L|$. 
Using the procedures in Section \ref{subsec:special}, we can construct $\tTheta(\tau, 6L)$. Its holomorphic part $\tTheta^+(\tau, 6L)$ has rational Fourier coefficients with denominators bounded by 6. 
Then equation \eqref{eq:tTheta_gen} defines $\tTheta(\tau, L)$, whose holomorphic part is given by equation \eqref{eq:tTheta+}. There, the sum over $\Gamma_0(6) \backslash \Gamma$ has 12 summands, each of which is the product of a $12 \times 432$ matrix $\frac{N_\gamma}{N}
\rho_{-L}^{-1}(\gamma) \cdot  \Cc_{L, N} \cdot \rho_{-NL}(\gamma_N) $ and a vector $ \tTheta^+(\tau_\gamma, NL)$ of size 432. 
By Theorem \ref{thm:tTheta_gen}, all the components of $\tTheta^+(\tau, L)$ have rational Fourier coefficients with bounded denominator.
Using SAGE \cite{SAGE}, we have numerically implemented this procedure and calculated the Fourier coefficients of $\tTheta^+_h(\tau, L)$ for each $h \in L^*/L$. 
Since $\rho_{-L}(\smat{-1}{0}{0}{-1})\ef_h = \ef_{-h}$ and the weight is odd, we have $\tTheta_h(\tau, L) = - \tTheta_{-h}(\tau, L)$ for all $h \in L^*/L$.
When $h =  0, \sqrt{3}, 3$ and $3 + \sqrt{3}$, $h = -h$ and $\tTheta_h(\tau, L) = 0$.
For the other $h$, the Fourier expansions are listed in the table below.
\begin{center}
  \begin{tabular}{|c||c|}
\hline
   $h$ & $3 \cdot [\Gamma: \Gamma_0(6)] \cdot \tTheta^+_h(\tau, L)$ \\ \hline
$1$ & ${4}{} q^{-1/12} - {584}{} q^{11/12} - {9764}{} q^{23/12} - {88024}{} q^{35/12} + O(q^{47/12})$  \\ \hline
$1 + \sqrt{3}$ & ${8}{} q^{2/12} - {1184}{} q^{14/12} - {17152}{} q^{26/12} - {142912}{} q^{38/12} + O(q^{50/12})$ \\ \hline
$2$ & ${}{} q^{-4/12} + 192 q^{8/12} + {4736}{}q^{20/12} + {51052}{} q^{32/12} + 365634 q^{44/12} + O(q^{56/12})$ \\ \hline
$2 + \sqrt{3}$ & ${2}{}q^{-1/12} + {380}{} q^{11/12} + {8714}{} q^{23/12} + {85060}{} q^{35/12} + O(q^{47/12})$\\ \hline
  \end{tabular}
\end{center}

Now as in \eqref{eq:theta}, we define $\tvartheta(\tau, L) := 2 I'(\tau, -L) + \log \varep_L \cdot \tTheta(\tau, L) \in H_{1, \rho_{-L}}(\Gamma)$, which satisfies $\xi_1 \tvartheta(\tau, L) = \vartheta(\tau, L)$. 
Then the eigen-component 
\begin{equation}
  \label{eq:eigen_comp}
  \tilde{f}(\tau) := \frac{1}{4} 
\lp \tvartheta_1(\tau, L) + \tvartheta_{2+\sqrt{3}}(\tau, L) - \tvartheta_{-1}(\tau, L) - \tvartheta_{-(2+\sqrt{3})}(\tau, L) \rp \in H_{1, \rho^{\mathrm{sc}}_{-L}}(\Gamma)
\end{equation}
maps to $\eta^2(\tau)$ under $\xi_1$. 
Note that $\tilde{f} = \frac{\tvartheta_1(\tau, L) + \tvartheta_{2 + \sqrt{3}}(\tau, L) }{2}$.
Its holomorphic part $\tilde{f}^+$ can be written explicitly as
$$
\tilde{f}^+(\tau) = 
\frac{1}{2}
\sum_{h \in \{1, 2 + \sqrt{3}\}  }
\lp  \tTheta^+_h(\tau, L) \cdot \log \varepsilon_L  + \sum_{\begin{subarray}{c}\Lambda \in \Gamma_L \backslash L + h \\ Q(\Lambda) < 0\end{subarray}} a(\Lambda) q^{|Q(\Lambda)|} \rp
$$
using Prop.\ \ref{prop:FE_I'}. Suppose $\tilde{f}^+$ has the Fourier expansion $\sum_{n \ge -1, n \equiv 11\bmod{12}} c^+(n) q^{n/12}$, then the coefficients have the shape
\begin{equation}
\label{eq:c+}
c^+(n) = - \frac{1}{12} \log \left| \frac{u(n)}{u(n)'} \right|,
\end{equation}
where $u(n) \in \Oc_F$ is a unit outside primes dividing $n$.
For example, $c^+(-1) = \frac{2 + 4 }{2 \cdot 3 \cdot [\Gamma: \Gamma_0(6)]} \log \varepsilon_L = \frac{\log \varepsilon_F}{6}$ and $u(-1) = \varepsilon_F^{-1}$.
The next coefficient $c^+(11)$ has two parts. One contribution comes from $\tTheta^+_h$, which is $\frac{-584 + 380}{2 \cdot 3 \cdot [\Gamma: \Gamma_0(6)]} \log \varepsilon_L = -\frac{17}{6} \log ( \varepsilon_F / \varepsilon_F')$. The other contribution is
\begin{align*}
\frac{1}{2} \sum_{\begin{subarray}{c} h \in \{1,  2 + \sqrt{3} \} \\ \Lambda \in \Gamma_L \backslash L + h\\  Q(\Lambda) = -11 \end{subarray}} 
a(\Lambda) 
&= \frac{1}{2} \lp 
\log \left| \frac{1 + 2\sqrt{3}}{1-2\sqrt{3}} \right| 
- \log \left| \frac{-17 - 10\sqrt{3}}{-17 + 10\sqrt{3}} \right| 
- \log \left| \frac{-4 - 3\sqrt{3}}{-4 + 3\sqrt{3}} \right| 
+ \log \left| \frac{8 + 5\sqrt{3}}{8 - 5\sqrt{3}} \right| 
\rp\\
&= 2 \log\left| \frac{1 + 2\sqrt{3}}{1-2\sqrt{3}} \right| - \log \left| \frac{\varepsilon_F}{\varepsilon_F'} \right| .
\end{align*}
Therefore, $c^+(11) = 2 \log |(1 + 2\sqrt{3})/(1-2\sqrt{3})| - 23/6 \log (\varepsilon_F/ \varepsilon'_F) $ and $u(11) = (1 - 2\sqrt{3})^{24} \cdot \varepsilon_F^{46}$.
Some of the other $u(n)$ with $n \le 300$ are listed below.
\begin{center}
  \begin{tabular}{|c||c |c| c| c| c|} \hline
$n$ & 23 & 35   & 59 & 95 & 275   \\ \hline
$u(n)$ &  $(2 - 3\sqrt{3})^{24} \varepsilon_F^{187}$ & $\varepsilon_F^{494}$  & $(4 + 5\sqrt{3})^{24} \varepsilon_F^{2748}$ & $\varepsilon_F^{21607}$ & $(1 - 2\sqrt{3})^{24} \varepsilon_F^{21099946}$ \\ \hline
$c^+(n)$ & $-39.42199..$ & $-108.42953..$ & $-605.1655..$ & $-4742.58..$ & $-4631291.273..$ \\ \hline
  \end{tabular}
\end{center}
It is interesting to notice that all Fourier coefficients with positive indices are negative. This can probably be shown from the construction of $\tTheta^+_h(\tau, L)$. 
Finally, Stokes' theorem tells us that the Petersson norm of $\eta^2(\tau)$ is $c^+(-1) =\frac{\log(\varepsilon_F)}{6}$.

\bibliographystyle{amsplain}
\bibliography{bib_HeckeCusp}{}

\end{document}